\newtheorem{lemma}{Lemma}
\newtheorem{conjecture}{Conjecture}
\newtheorem{theorem}{Theorem}
\newtheorem{proposition}{Proposition}
\newtheorem{example}{Example}
\title{Picking Operations in Warehouses with
Dynamically Arriving Orders: How Good is
Reoptimization? }
\author{
  Catherine Lorenz\\
  Chair of Management Science/Operations and Supply Chain Management,\\ University of Passau, Passau, Germany\\
   \And
  Alena Otto\\
  Chair of Management Science/Operations and Supply Chain Management,\\ University of Passau, Passau, Germany\\
  \texttt{Alena.Otto@uni-passau.de} \\
   \And
  Michel Gendreau\\
  Department of Mathematics and Industrial Engineering and CIRRELT,\\
Polytechnique Montr\'{e}al, Montr\'{e}al, Canada
}
\begin{document}
\maketitle

\begin{abstract}
E-commerce operations are essentially online, with customer orders arriving dynamically. However, very little is known about the performance of online policies for warehousing with respect to optimality, particularly for order picking and batching operations, which constitute a substantial portion of the total operating costs in warehouses.
We aim to close this gap for one of the most prominent dynamic algorithms, namely  
 reoptimization \textit{(Reopt)}, which reoptimizes the current solution each time when a new order arrives. We examine \textit{Reopt} in \textit{the Online Order Batching, Sequencing, and Routing Problem (OOBSRP)}, in both cases when the picker uses either a manual pushcart or a robotic cart. Moreover, we examine the \textit{noninterventionist Reopt} in the case of a manual pushcart, wherein picking instructions are provided exclusively at the depot.
We establish \textit{analytical} performance bounds employing \textit{worst-case and probabilistic analysis}. We demonstrate that, under generic stochastic assumptions, \textit{Reopt} is almost surely asymptotically optimal and, notably, we validate its near-optimal performance in computational experiments across a broad range of warehouse settings. These results underscore \textit{Reopt}'s relevance as a method for online warehousing applications.
\end{abstract}

\keywords{On-line algorithms, Competitive analysis, Order batching and routing problem, Warehouses, Reoptimization}

\section{Introduction}
E-commerce has emerged as an important channel of retail trade, emphasizing fast deliveries and large product assortments, while pressuring for low logistics costs. This makes warehousing an operational challenge and a leading determinant of competitiveness in the e-commerce market. 
The central component of efficient warehousing is order picking, which accounts for up to 60-70\% of the total operating costs \citep{Chen2015,Marchet2015}. 
Most warehouses are so-called \textit{picker-to-parts} warehouses \citep{napolitano2012, Vanheusden2022}, in which order pickers travel around the warehouse to collect inventory items (see Figure~\ref{fig:carts}a).  In a widespread \textit{pick-while-sort} setup, pickers try to collect several orders simultaneously to save on costs. They use a \textit{manual pushcart} (Figure~\ref{fig:carts}a) or a \textit{robotic cart} (Figure~\ref{fig:carts}b)
.  Each cart contains several bins, and each bin can accommodate an order. In this paper, we focus on picking operations in picker-to-parts warehouses and examine both technologies, manual pushcarts and robotic carts.

\begin{figure}[h]

    \begin{subfigure}[b]{0.49\textwidth}

        \includegraphics[width=\textwidth]{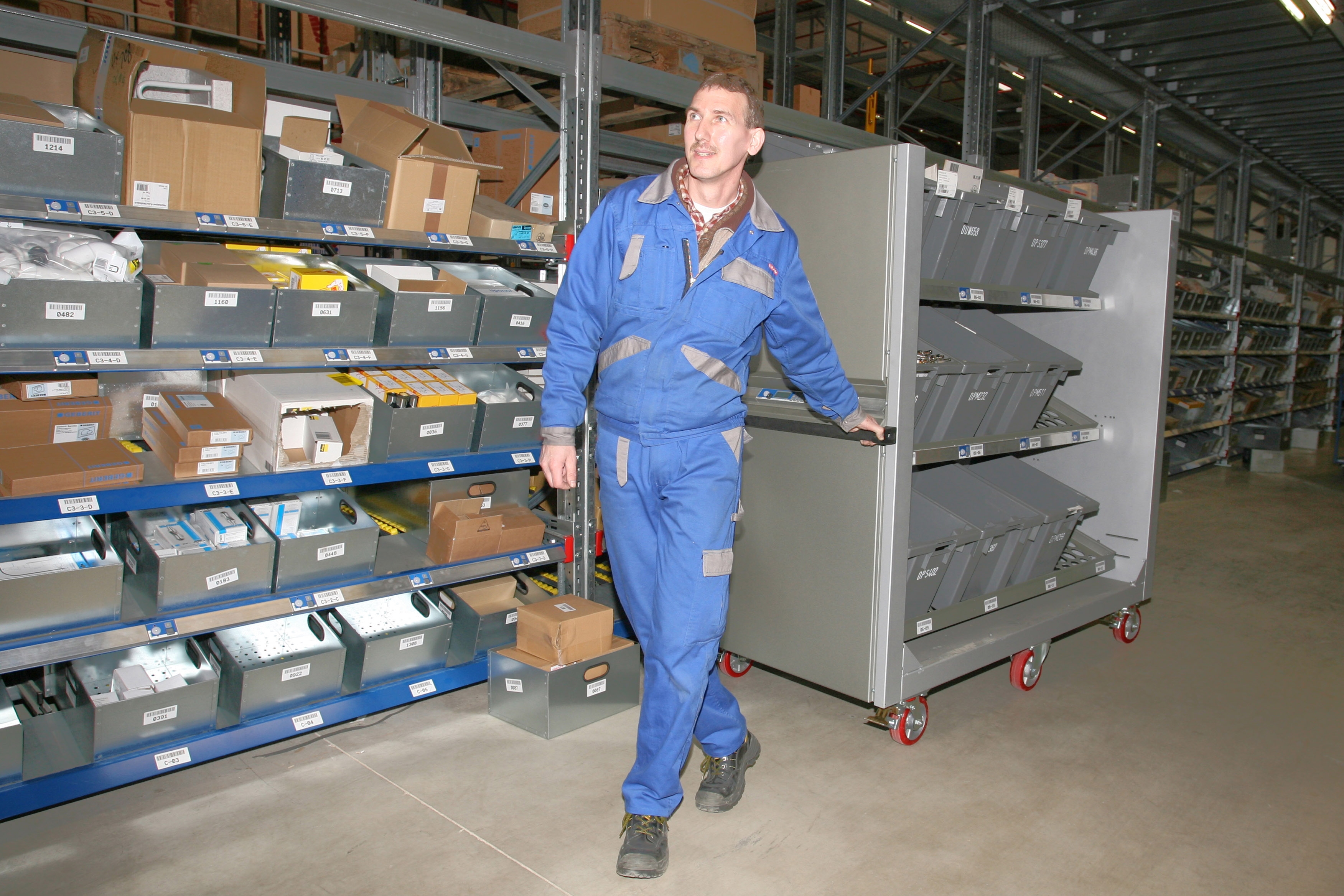}
        \raggedright\tiny{Source: KBS Industrieelektronik GmbH. License: CC BY-SA 3.0}
        \vspace{0.1cm}
        \caption*{(a) Pushcart}
        \label{fig:pushcart}
    \end{subfigure}
    \hfill
    \begin{subfigure}[b]{0.49\textwidth}

        \includegraphics[width=\textwidth]{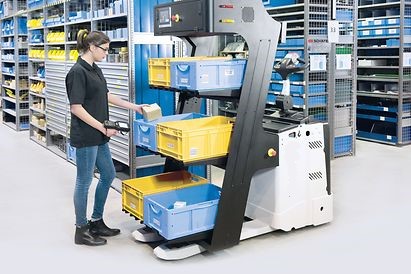}
        \raggedright\tiny{Source: SSI Schäfer}
        \vspace{0.1cm}
        \caption*{(b) Robotic cart}
        \label{fig:robot}
    \end{subfigure}

    \caption{An order picker with a cart in a picker-to-parts warehouse \label{fig:carts}}
    
\end{figure}

E-commerce operations are \textit{essentially online}, with customer orders arriving dynamically. However, \textit{very little} is known about the performance of the \textit{online policies} for handling these orders with respect to \textit{optimality}. A prominent online policy is \textit{reoptimization}, which myopically optimizes picking operations for the set of available orders at specific points in time. Moreover, reoptimization can leverage the recent progress in exact algorithms and heuristics developed for the 
\textit{offline} setting, in which all the customer orders are \textit{known} in advance \citep[cf. ][]{Loeffler2023, Schiffer2022, Valle2017}. Nevertheless, it remains open whether reoptimization makes sense for picking operations in warehouses. Optimization literature provides examples both of good \citep{Jaillet2008,Hwang2017} and unfortunate \citep[cf. ][]{Borodin1998} performance of reoptimization in online problems. 

In this paper, we study reoptimization analytically and experimentally in a generic order picking setting, which is known as \textit{the Online Order Batching, Sequencing, and Routing Problem (OOBSRP)} in the taxonomy of \citet{Pardo2023}. In OOBSRP, given the dynamically arriving orders, we have to determine \textit{a) [batching]} which orders to pick together in a single cart tour and \textit{b) [routing]} how to schedule the picking operations and how to route the picker. In the following, Section~\ref{sec:literature} reviews the literature, and we state the article's contribution in Section~\ref{sec:contribution}.

\subsection{Literature review}\label{sec:literature}
\textit{Online} order picking and batching problems have attracted much interest in the literature, see the surveys of \citet{Boysen2019, Li2022, Pardo2023, VanGils2019} and \cite{Vanheusden2022}. Yet, very little is known about \textit{optimal} policies. The available research can be roughly classified into three threads. The first one analyzes intuitive policies (such as `first come first served', S-shape routing, etc.). Here, important insights on the system stability and policy parameters are formulated using, in most cases,  the tools of the queuing theory \citep[cf. ][]{LeDuc2007,Vannieuwenhuyse2009,Yu2009,Yu2008,Schleyer2012}.  Within this thread, \citet{Bukchin2012}  analyze optimal decisions on the waiting time before picking a batch for a significantly simplified problem (e.g., the time to collect a batch does not depend on the locations of the respective items in the warehouse).  
Secondly, a number of papers design sophisticated online policies based on metaheuristics, AI, or rule-based approaches \citep[e.g., ][]{Cals2021, DHaen2022, Zhang2016,Zhang2017}.  Finally and prominently, a number of authors use reoptimization as the online policy of choice 
\citep[see][]{Lu2016, Giannikas2017, Dauod2022}. 
In all these studies, reoptimization is performed \textit{immediately} at the arrival of a new order.
In all the discussed cases, the designed policies are usually benchmarked against simple policies, so that 
their \textit{performance gap to optimality is generally unknown}. 

To the best of our knowledge, only \citet{Henn2012} and \citet{Alipour2018} compare selected policies to optimality. Both papers focus on the worst-case performance using \textit{competitive analysis}. 
\citet{Henn2012} studies reoptimization in a special case of OOBSRP with a manual pushcart, where the picker routes start and end in the depot and follow a predefined simplified scheme, which defines the problem as the \textit{Online Order Batching Problem (OOBP)} according to the classification of \citet{Pardo2023}. In his analysis, 
reoptimization can only be performed when the picker retrieves a new cart in the depot; it is performed either immediately when some recently arrived orders are available or after some deliberate waiting time. The concept of waiting times seems appealing, because further orders may arrive that make better-matching batches with the available orders. In \citet{Henn2012}, the introduction of the waiting time did not improve the results of the algorithm. The paper establishes a \textit{lower bound} of 3/2 for the \textit{strict competitive ratio} for any deterministic online algorithm and proves an \textit{upper bound} of 2 for the \textit{asymptotic competitive ratio} for their reoptimization policy (see definitions in Section~\ref{sec:comp_ratio}). The tight competitive ratios remain unknown. 
\citet{Alipour2018} extend the results of \citet{Henn2012} to multiple pickers. 

On a more general note, OOBSRP can be described as a nontrivial blend of two well-known optimization problems -- \textit{the online traveling salesman problem (online TSP)} (since the picker is routed over a set of locations) and \textit{the online batching problem (OBP)} (since the incoming orders have to be grouped into batches to be processed simultaneously in one cart tour). 
The seminal work of \citet{Ausiello1999} introduces two versions of online TSP: \textit{the nomadic one (N-TSP)}, for which the route of the online server ends at the last visited location, and \textit{the homing one (H-TSP)}, which requires the server to return to the depot. 
As we will explain in Section~\ref{sec:problem}, N-TSP and H-TSP resemble OOBSRP in the special case of infinite cart capacity in the cases where a robotic cart and a manual pushcart are used for picking, correspondingly. 
In OBP, the machine can process up to $c\in \mathbb{N}$ jobs simultaneously, and the processing time equals the processing time of the longest job. The objective is to schedule the processing of dynamically arriving non-preemptive jobs on this machine in order to minimize the makespan \citep[cf. ][]{Zhang2001}. OBP can be interpreted as a special case of OOBSRP with a manual pushcart in a single-aisle warehouse with the depot located at the end of the aisle. Let us denote as \textit{Reopt} the \textit{immediate} reoptimization policy, which reoptimizes each time a new order arrives. The \textit{worst-case} performance of \textit{Reopt} has been estimated as follows: The strict competitive ratio of \textit{Reopt} equals 2.5 for N-TSP \citep{Ausiello1999}, does not exceed 2.5 for H-TSP \citep{Ausiello1999}, and equals 2 for OBP \citep{Liu2000}. Moreover, although better policies are possible, \textit{Reopt}'s results in these problems are quite close to the best achievable worst-case results \citep[see][]{Ascheuer2000, Ausiello1999, Zhang2001}. Further, \citet{Ascheuer2000} proved that the result of \textit{Reopt} in H-TSP is at most twice the complete-information optimum objective value plus a constant additive factor that characterizes the diameter of the studied metric space. In none of the aforementioned studies, \textit{Reopt} was compared empirically to an optimal algorithm. 

Only very few papers in the literature perform probabilistic competitive analysis for online routing problems with \textit{capacity restrictions}. The reason is that the service times of the tours formed for each capacitated vehicle by nontrivial policies, such as \textit{Reopt}, usually have a nonzero length and are highly interdependent, which makes a formal analysis very hard. Therefore, most existing studies analyze variants of the `first come first served' policy to leverage the methods of the queuing theory \citep[cf. e.g.,][]{ Bertsimas1993}. An exception is a prominent article by \citet{Jaillet2008}, who studied \textit{the online TSP with precedence and capacity constraints (OTSP-PC)}. \citet{Jaillet2008} examine a problem extension with several salesmen, propose online algorithms with the best-possible strict competitive ratio for several problem variants, perform competitive analysis under resource augmentation, and, for specific problem variants and under several additional assumptions, perform probabilistic asymptotic analysis of online algorithms for a renewal process of arriving requests and an arrival process with order statistics property. However, OTSP-PC studied by \citet{Jaillet2008}  \textit{differs} from OOBSRP as it allows \textit{splitting} of orders (called requests) among several tours, whereas OOBSRP does not. Consequently, once the first item of an order (called \textit{city of a request} in OTSP-PC) is collected, a commitment to collect \textit{all} the remaining items of this order within the current tour emerges in OOBSRP (see Example~\ref{ex:example1} of Section~\ref{sec:abs_CR}). This is nontrivial and changes the character of the problem completely. Overall, the idea of a probabilistic asymptotic analysis of Reopt as well as the nature of examined arrival processes in this paper were inspired by \citet{Jaillet2008}. However, this paper's methodology  is \textit{distinct} in several essential ways. For instance, \citet{Jaillet2008} rely on a number of additional assumptions in their probabilistic analysis that  do not apply to OOBSRP and are not used in this paper, such as Euclidean space, one item per order, and -- in case of a renewal process for order arrivals -- infinitely growing cart capacity (called server capacity) with the number of orders $n$. This radically changes the mechanics of the proofs and, as in Theorem~\ref{th:stoch_optimality}, requires new proof techniques.

To sum up, the optimality analysis of reoptimization for OOBSRP, which combines the features of routing (online TSP) and batching (OBP) has not been performed so far. Although the results of \textit{Reopt}  in online TSP and OBP are quite promising, it is absolutely unclear, whether it retains its good performance in the more general case of OOBSRP. 

Furthermore, this paper is the first to prove almost sure asymptotic optimality of a reoptimization algorithm in case of a fixed server capacity for any capacitated online routing problem, under a renewal process model for arrival times.

\subsection{Contribution}\label{sec:contribution}

OOBSRP, which describes picking operations in e-commerce, is a central problem in the warehousing literature. However, we still do not know the performance gap to optimality for \textit{any} OOBSRP policy. 
We aim to close this gap for one of the most prominent dynamic algorithms, namely  
immediate reoptimization \textit{(Reopt)}, which reoptimizes the current solution each time a new order arrives. Therefore, we establish \textit{analytical} performance bounds by means of \textit{worst-case and probabilistic analysis}, and \textit{empirically} validate the \textit{almost optimal} performance of \textit{Reopt}, which is indicated by our analytical findings. In particular,

\begin{itemize}
\item We analyze OOBSRP for two widespread technologies: manual pushcarts and robotic picking carts. 
Since in the former case, transmitting instructions to the picker 
is only possible at the depot in some types of warehouses \cite[cf.][]{Chen2010}, we also examine a so-called \textit{non-interventionist} version of \textit{Reopt} for manual pushcarts. \textit{Non-interventionist} \textit{Reopt} does not allow the modification of started batches, therefore reoptimization only takes place when the picker is at the depot. 
\item Our main result states that \textit{Reopt} is \textit{almost surely asymptotically optimal} under very general stochastic assumptions in all the examined picking systems. 
Stated differently, in any sufficiently large instance, the result of \textit{Reopt} coincides, with probability one, with the outcome of the optimal policy for this instance and cannot be improved. This is the case when the order arrival times can be modeled as so-called order statistics, or as a Poisson process with sufficiently small rates. In the case of a Poisson process, we also show that \textit{Reopt} achieves a slightly weaker notion of optimality when the order arrival rate is sufficiently large.  
\item In the subsequent analysis, we drop stochastic assumptions and examine the performance of \textit{Reopt} in \textit{any} large instance as well as in instances of arbitrary sizes. Among others, we prove that the worst-case performance of \textit{Reopt} compared to the optimum (\textit{competitive ratio}) approaches 2 in all studied picking systems, 
In other words, no policy can improve upon the results of \textit{Reopt} by more than 50\% in such instances.
\item Our experiments, which feature typical warehouse settings and order arrival rates, reveal \textit{almost optimal} performance of \textit{Reopt} already in moderate-sized instances.
\end{itemize}

Our analysis offers significant insights into long-standing discussions within the warehousing literature. The first pertains to the merits of \textit{waiting}, wherein the picker interrupts picking to wait in the depot (or field) for additional orders, in the hope for the arrival of better-matching orders for the next batch \citep{Bukchin2012, Pardo2023}. Waiting is an integral part of prominent fixed-time-window batching and variable-time-window batching policies \citep{Pardo2023, Vannieuwenhuyse2009}. The second involves \textit{anticipation}, which seeks to predict upcoming order properties based on historical data \citep{Ulmer2017}. In prevalent designs, anticipatory algorithms \textit{reserve} slots in batches for future orders, resulting in the under-utilization of the cart for some time and potentially leaving out well-matching orders. Both waiting and anticipation incur immediate costs for uncertain future benefits.
The performance gaps of \textit{Reopt}, which eschews both \textit{waiting} and \textit{anticipation}, establish a clear benchmark for assessing the benefits of these concepts. In scenarios where \textit{Reopt} closely approaches complete-information optimality (which is a strong concept of optimality, cf. Section~\ref{sec:problem}), the introduction of waiting or anticipation cannot significantly improve the results and could potentially lead to deterioration. 

We emphasize the novelty of the performed probabilistic analysis of \textit{Reopt}. This analysis is one of the few research endeavors to scrutinize optimality gaps of a nontrivial online policy for a variant of the capacitated vehicle routing problem, diverging from the conventional `first come first served' approach, and assuming realistic capacity restrictions (see the discussion in Section~\ref{sec:literature}).

We proceed as follows. Section~\ref{sec:problem} states OOBSRP formally, Section~\ref{sec:comp_ratio} presents theoretical analysis of \textit{Reopt} for OOBSRP, and Section~\ref{sec:experiments} reports on computational experiments. We conclude with a discussion in Section~\ref{sec:conclusion}.

\section{Problem description}\label{sec:problem}

\begin{figure}[htbp]
    \centering
    \includegraphics[scale=0.45]{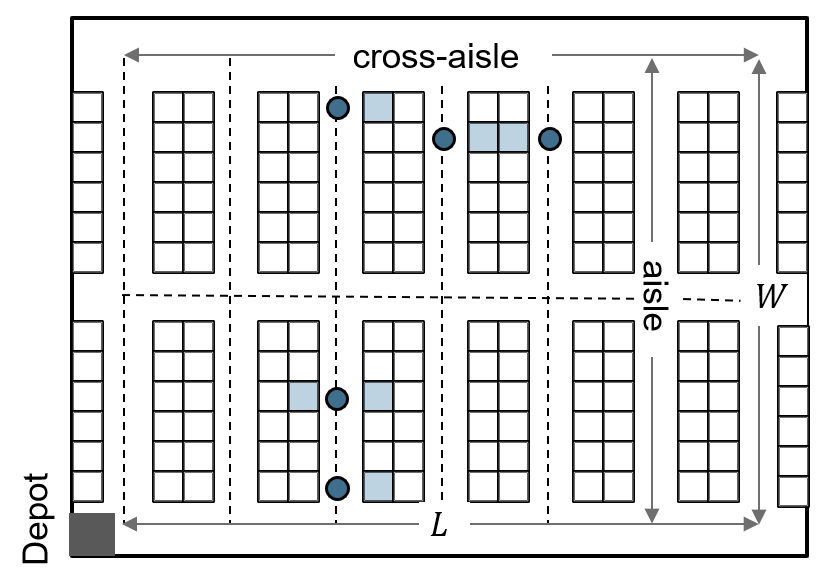}
    \caption{A typical warehouse layout. \label{fig:warehouse} \\ \scriptsize{Note. A warehouse of length $L$ and width $W$ with three cross-aisles and seven aisles. Circles mark access points to pick highlighted items.}}
    \label{fig:warehouse}
    \vspace{1ex}
\end{figure}
 
A typical warehouse (see Figure~\ref{fig:warehouse}) resembles a rectilinear grid of length $L$ and width $W$, which has $a\geq 1$ equidistantly positioned parallel aisles and $b\geq 2$ equidistantly positioned cross-aisles. The depot is located at some fixed location $l_d$ in the grid. The picker moves in a rectilinear way through aisles and cross-aisles, which results in a particular distance metric $d(~)$. As explained in Figure~\ref{fig:warehouse}, the picker can access items only from 
the aisles, in particular, no items are located in cross-aisles.

Orders $(o_1, o_2, \ldots,o_j, \ldots)$ arrive dynamically over time and each order $o_j$ is associated with some number $k(j)\in \mathbb{N}$ of \textit{picking locations} in the warehouse, where the ordered \textit{items} have to be retrieved. Following the hierarchical planning process in warehouses, OOBSRP assumes that the picking location of each ordered item has been already determined, therefore, we use these terms interchangeably. Recall that the cart of the picker consists of $c\in \mathbb{N}$ 
bins, so that, to save on costs, she collects up to $c$ orders at a time during one cart tour in a so-called \textit{batch}. Orders cannot be split between different bins and each bin may contain only one order at a time. Parameter $c$ is called \textit{batching capacity}. 

We examine \textit{picking systems} with two different types of carts: manual pushcarts and robotic carts; we will denote these carts as \textit{pushcart} and \textit{robot}, respectively, in the following. The main difference between them is that the picker has to bring the pushcart to the depot each time after a batch has been collected; whereas the robot can drive there autonomously and the picker can proceed to pick the items `in the field' into the timely arrived new robot. 

OOBSRP considers one picking zone of a single picker. Therefore, when we talk about the warehouse layout below, we refer to the layout of a single zone of the warehouse.

We make the following assumptions:
\begin{itemize}
\item \textit{W.l.o.g.}, we assume that the picker travels at unit speed, so that we talk about the distances and picker travel times interchangeably. In our theoretical analysis, we set the picking times of retrieving an item from the shelf and placing it in the cart to zero for simplicity.  
Later on, in the empirical analysis of Section~\ref{sec:experiments}, we consider non-zero picking times. 
\item Following the literature on robotic carts \citep[e.g., ][]{Loeffler2022,Zulj2022}, we assume that an empty robot is immediately available for the picker after she has completed a batch. Indeed, two robots per picker are usually sufficient to avoid picker waiting \citep[cf.][]{Loeffler2022}.  
\end{itemize}

Before stating OOBSRP in Section~\ref{sec:onlineproblem}, we introduce the \textit{offline} problem variant with complete information (OBSRP$^*$) in Section~\ref{sec:offlineproblem}. Figure~\ref{fig:Ill_ex} states an illustrative example that we use throughout this section.
\subsection{Offline problem variant with complete information: OBSRP$^*$}\label{sec:offlineproblem}

\begin{figure}
\centering
    \begin{subfigure}[b]{0.39\textwidth}
\includegraphics[width=\textwidth]{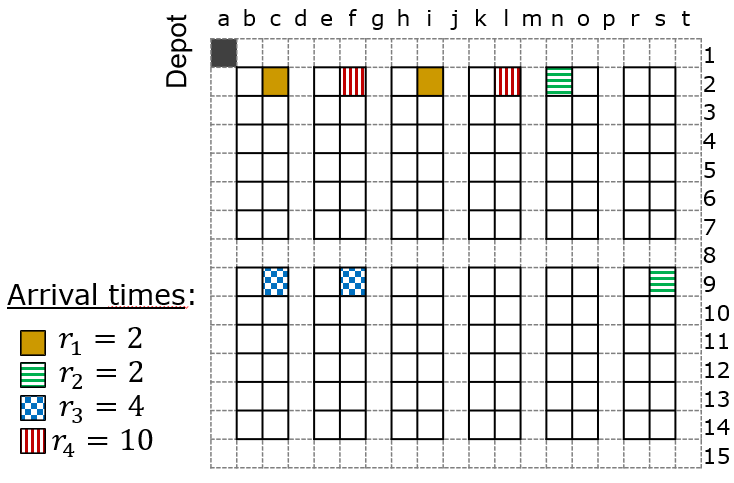}
        \caption*{\footnotesize{(a) Instance $I$}}
        \label{fig:pushcart}
    \end{subfigure}
    ~ 
    \begin{subfigure}[b]{0.28\textwidth}
\includegraphics[width=\textwidth]{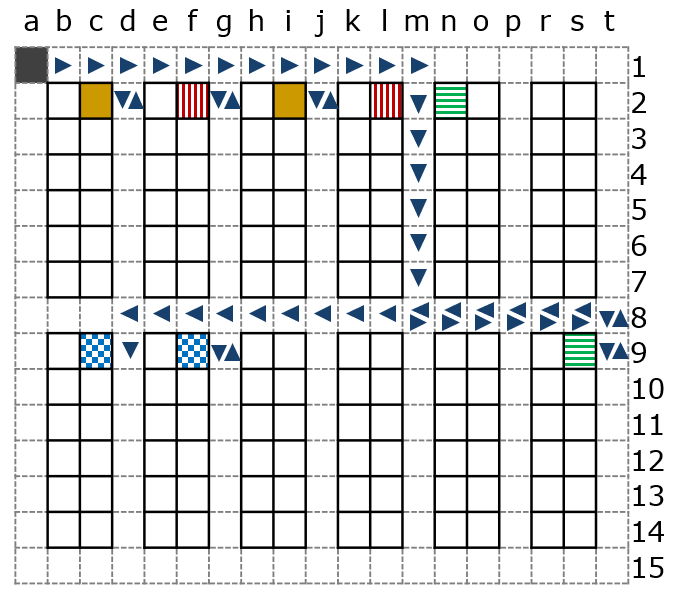}
        \caption*{\footnotesize{(b) $CIOPT$ Picker route with \textit{robot}}}
        \label{fig:pushcart}
    \end{subfigure}
    ~ 
    \begin{subfigure}[b]{0.28\textwidth}
        \includegraphics[width=\textwidth]{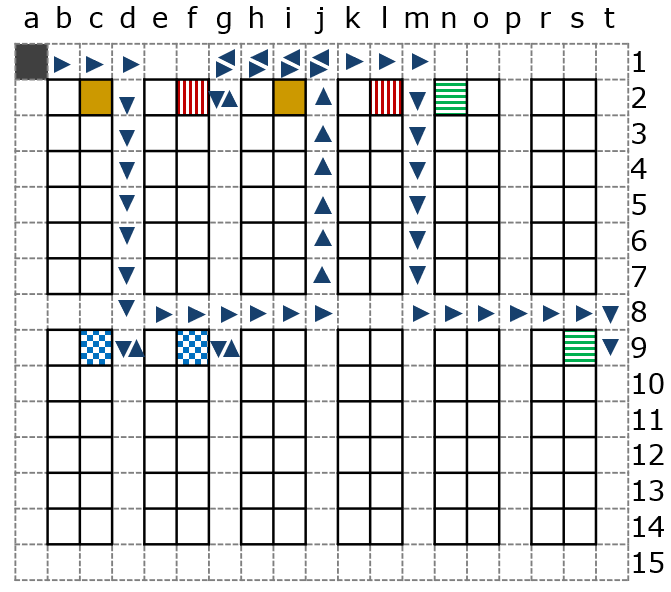}
        \caption*{\footnotesize{(c) \textit{Reopt} Picker route with \textit{robot}}}
        \label{fig:robot}
    \end{subfigure}
\newline   
    \caption{Illustrative example. \hspace{10cm}  \newline
    \scriptsize{ Note. Items of all ($n=4$) orders are marked in different colors.}} \label{fig:Ill_ex}
\end{figure}

An \textit{instance} $I$ of OBSRP$^*$ \textit{with a specific type of cart} has the following input:
\begin{itemize}
\item Infrastructure parameters, such as the location of the depot $l_d$, width $W$, length $L$, the number of aisles $a$ and cross-aisles $b$,  as well as the batching capacity $c$;
\item the \textit{sequence of} dynamically arriving \textit{orders } $o=(o_1,...,o_n)$, where $o_j:=\{s_j^1,...,s_j^{k(j)}\}$ refers to the set of  $k(j)\geq 1$ picking locations for the $j^{\text{th}}$ order; 
\item the respective \textit{sequence of arrival times} $r=(r_1, ...,r_n)$, with $r_j$ being the arrival time of $o_j$ and $r_1\leq ... \leq r_n$. Observe that in OBSRP$^*$, where all the information is known in advance, arrival times can be simply interpreted as \textit{release times} for picking the items of the corresponding orders. 

\end{itemize}

The objective is to minimize the \textit{total completion time} of picking, which describes the picker's working time. In the system with a pushcart, the picker has to return the pushcart to the depot in the end. On the contrary, in the system with a robot, the total completion time describes the time passed until the picker picks the \textit{last item} of the last batch. 
Given this objective, an optimal solution of OBSRP$^*$ for both cases of a pushcart and a robot is uniquely defined by:
\begin{itemize}
\item a mutually disjoint partition of the orders into batches: $\{o_1, ..., o_n\}=S_1 \cup S_2 \cup\ldots \cup S_f$, \\ $S_l\cap S_k=\emptyset \ \forall k,l \in \{1,...,f\}$; each batch contains at most $c$ orders.
\item a sequence of these batches $\pi^{\text{batches}}$, which determines the order in which the batches are processed by the picker.
\item for each batch $S_l$, a permutation of the picking locations of all the included orders, which determines in which order the picker collects the items.
\end{itemize}

Indeed, in an optimal picking schedule, the picker will pick each item at the earliest possible time (which accounts for the arrival times of the orders) given the sequence of batches $\pi^{\text{batches}}$ and the visiting sequences of picking locations for each batch. 
We denote the optimal objective value of instance $I$ as \textit{\underline{c}omplete \underline{i}nformation \underline{opt}imum} $CIOPT(I)$ and a respective optimal solution as $CIOPT$.

\begin{table}
\centering
\caption{Algorithms and their solutions for the illustrative instance $I$ from Figure~\ref{fig:Ill_ex}}\label{tab:Solutions_of_ex}
\scriptsize{
\begin{tabular}{lcll}
\toprule
\multirow{2}{*}{~Algorithm}& Objective & Sequence of batches & \multirow{2}{*}{Picker route$^\text{ii)}$ $\pi$}\\
 & value& $\pi^{\text{batches}}=(S_1,S_2,\ldots)$& \\
\midrule
\multicolumn{4}{l}{\underline{Case of a \textit{pushcart}}}\\
~$CIOPT^\text{i)}$& $88$ & $(\{o_1,o_3\}\{o_2,o_4\})$ & $(a1,d2,d9,g9,j2,a1,g2,t9,m2,m2,a1)$ \\
~Non-interventionist \textit{Reopt} & $100$ &  $(\{o_1,o_2\},\{o_3,o_4\})$  & $(a1,d2,j2,t9,m2,a1,d9,g9,m2,g2,a1)$, \\
~Interventionist \textit{Reopt}& $90$ & $(\{o_1,o_4\}\{o_2,o_3\})$ & $(a1,\mathbf{c1},d2,\textbf{g1},g2,m2,j2,a1,m2,t9,g9,d9,a1)$ \\
\multicolumn{4}{l}{}\\
\multicolumn{4}{l}{\underline{Case of a \textit{robot}}}\\
~$CIOPT$& $52$ &$(\{o_1,o_4\},\{o_2,o_3\})$ & $(a1,d2,g2,j2,m2,m2,t9,g9,d9)$ \\
~Interventionist \textit{Reopt}& $54$ & $(\{o_1,o_3\},\{o_2,o_4\})$  & $(a1,\textbf{c1},d2,\textbf{d6},d9,g9,j2,g2,m2,m2,t9)$ \\
\midrule[\heavyrulewidth]
\multicolumn{4}{l}{\footnotesize{\textit{Note.} $^\text{i)}$  $CIOPT$ refers to an exact algorithm for the complete-information counterpart of instance $I$.}}\\
\multicolumn{4}{l}{$^\text{ii)}$ The picker positions at reoptimization events of the interventionist \textit{Reopt} are marked in bold.}
  \end{tabular}}
\end{table}

Table~\ref{tab:Solutions_of_ex} illustrates $CIOPT(I)$ 
for the instance $I$ with $n=4$ orders provided in Figure~\ref{fig:Ill_ex}a in the case of a pushcart and a robot, respectively.

Note that in the optimal solutions for both cases, the picker leaves the depot towards the first picking location even before the arrival of the respective order, since the arrival times and picking locations of all the orders are known in advance. Also observe that grid cells in Figure~\ref{fig:Ill_ex} have a unit length, e.g., $d(a1,d2)=4$ as the picker has to traverse four units (cell lengths) between the cell centers of the depot $a1$ and $d2$.

In an optimal solution of OBSRP$^*$ with a pushcart, the picker arrives at each picking location after the arrival of the corresponding order. The first batch consists of two orders ($S_1=\{o_1, o_3\}$) and is completed in $(4+7+5+10+10)=36$ time units, after the picker's return to the depot. The second batch $S_2=\{o_2, o_4\}$ is picked in $(7+19+13+0+13)=52$ time units. Overall, $CIOPT$ requires $(36+52)=88$ time units. 

Observe that in $CIOPT$ for OBSRP$^*$ with a robot (c.f. Figure~\ref{fig:Ill_ex}b), the picker waits for the arrival of order $o_4$ for one time unit at location $g2$ while performing the first batch. Overall, batch $S_1=\{o_1,o_4\}$ is picked in $(4+5+[1]+5+5)=20$ time units. After having completed the first batch, the loaded robot returns to the depot autonomously, a new empty robot is immediately available, and the picker continues with the next batch $S_2=\{o_2,o_3\}$ from her current location $m2$. $CIOPT$ requires $(20+32)=52$ time units. 
\subsection{Online problem and the reoptimization policy}\label{sec:onlineproblem}

In the \textit{online} problem, OOBSRP, neither the number of orders $n$, nor the arrival times of orders are known ahead of time, and the characteristics of each order $o_j$ are revealed at its arrival $r_j$. Therefore, online algorithms represent \textit{policies}, which prescribe a decision given the current state and available information. We analyze the online algorithm \textit{Reopt}, which optimizes picking operations based on the available information without any anticipation of future orders. 

In traditional warehouses with a pushcart, the picker can receive instructions only at the depot, for example, because the cart- and picker equipment is order-specific for each batch, so that an already commenced batch cannot be replanned. Therefore, we differentiate between \textit{non-interventionist Reopt} and \textit{interventionist Reopt}. In \textit{non-interventionist} \textit{Reopt}, reoptimization occurs at each return of the picker to the depot provided newly arrived orders are available. If all the available orders have been completed, the picker simply stays at the depot until further orders arrive. Let reoptimization take place at some time $t$, then it amounts to solving the \textit{complete-information} problem OBSRP$^*$ for the orders, which arrived before $t$ \textit{and} have not been picked so far; obviously, these orders are treated as immediately available. 

To the contrary, in the \textit{interventionist Reopt}, reoptimization is performed at each arrival $r_j$ of a new order $o_j$. 
Also in this case, the planner solves a variant of OBSRP$^*$ for the already arrived, but not yet processed orders; however, she takes the following aspects into account: \textit{i)} if some bins of the cart contain picked items, the respective orders must be completed as part of the \textit{current} batch; \textit{ii)}  in the current batch, empty bins are free to be (re)assigned to any available and not yet commenced order, \textit{iii)} the picker route starts from her current position in the warehouse. 

To sum up, we examine three picking systems for OOBSRP, each featuring a cart technology and the applied \textit{Reopt} algorithm: One with a pushcart and a noninterventionist \textit{Reopt} policy (\textit{Pcart-N}), one with a pushcart and interventionist \textit{Reopt} (\textit{Pcart}), and one with a robot and interventionist \textit{Reopt} (\textit{Robot}).

Table~\ref{tab:Solutions_of_ex} illustrates the objective values $Reopt(I)$ of \textit{Reopt} for the instance $I$ from Figure~\ref{fig:Ill_ex} in the three considered picking systems: \textit{Pcart-N}, \textit{Pcart}, and \textit{Robot}. In all three systems, the picker stays at the depot until the first two orders arrive at time $r_1=r_2=2$. 

In \textit{Pcart-N}, at the time $t=2$ of the first (re)optimization, only orders $o_1$ and $o_2$ are available and the batch $S_1=\{o_1, o_2\}$ with the completion time of $(4+8+16+13+13)=54$ is formed.  Although new orders arrive at times 4 and 10, no replanning can take place until the picker returns to the depot. The next reoptimization occurs at time $(54+2)=56$ and the remaining orders are assigned to the next batch. Altogether, $Reopt(I)=56+44=100$ for \textit{Pcart-N}. 

In contrast, in \textit{Pcart} and \textit{Robot}, interventionist \textit{Reopt} reoptimizes the current solution at each arrival of a new order. In both systems, at $r_3=4$ the picker is at decision point $c1$, and the planned routes for the batch $S_1=\{o_1,o_2\}$ are $(a1,d2,m2,t9,j2,a1)$ and $(a1,d2,j2,m2,t9)$ in the case of \textit{Pcart} and \textit{Robot}, respectively. 
For \textit{Pcart}, the new sequence of batches is $(\{o_1\},\{o_2,o_3\})$ and the new route is $(a1, \textbf{c1}, d2,j2,a1,m2,t9,g9,d9,a1)$. This route is interrupted at the arrival of $o_4$ at $r_4=10$, when the picker is at $g1$, resulting in the final solution of Table~\ref{tab:Solutions_of_ex} with $Reopt(I)=90$.
For \textit{Robot}, the batching is first changed to $(\{o_1,o_3\}\{o_2\})$ and the route is changed to $(a1,\textbf{c1},d2,d9,g9,j2,m2,t9)$. Order $o_4$ arrives when the picker is at position $d6$ having picked the first item of $o_1$. Therefore, at the next reoptimization at time $r_4=10$, one bin in the current batch is reserved for order $o_1$. So, the new batching is $(\{o_1,o_3\},\{o_2,o_4\})$ and results in $Reopt(I)=33+21=54$ for \textit{Robot} (c.f. Figure~\ref{fig:Ill_ex}c).

\section{Extended competitive analysis for \textit{Reopt} in OOBSRP}\label{sec:comp_ratio}

In this section, we perform competitive analysis for \textit{Reopt}. In competitive analysis, the result of some algorithm $ALG(I)$ for instance $I$ is compared to the best possible online policy for this instance. Obviously, the best possible online policy is that of an oracle, which perfectly foresees the arrival time and the composition of each order, and the result of this policy is $CIOPT(I)$ \citep[cf. ][]{woegingerandfiat1998}.

We apply three different ratios to highlight the various aspects of OOBSRP performance. We start in Section~\ref{sec:asymp_opt} with the \textit{almost surely (a.s.) asymptotical competitive ratio} $\sigma^{a.s.}_{asymp.}(ALG)$, which assumes a probabilistic lens and disregards improbable events. We say that $\sigma^{a.s.}_{asymp.}(ALG)=\alpha$, if:
\begin{align}
    \mathbb{P}(\lim_{n \rightarrow \infty} \frac{ALG(I(n)^{rand})}{ CIOPT(I(n)^{rand})} = \alpha) = 1,
\end{align}
where $I(n)^{rand}$ is a random instance with $n$ orders following given stochastic assumptions. Note that $\sigma^{a.s.}_{asymp.}(ALG)$ strongly depends on the stochastic assumptions under consideration.

To analyze \textit{general} large instances in Section~\ref{sec:asy_comp_ratio}, we turn to the \textit{asymptotic competitive ratio $\sigma_{asymp.}(ALG)$}. We call algorithm $ALG$ \textit{asymptotically} $\alpha$-\textit{competitive} if, for any instance $I$, there exists a constant $const$, which is independent of the number and characteristics of the orders in $I$, but may depend on the warehouse geometry, such that: 
\begin{align}
&&& ALG(I)\leq \alpha \cdot  CIOPT(I) + const& \label{eq:asy_def}
\end{align}
The \textit{asymptotic competitive ratio} $\sigma_{asymp.}(ALG)$ is then defined as the infimum over all constants $\alpha$, such that $ALG$ is asymptotically $\alpha$-competitive.

Finally, to include small instances in our analysis (Section~\ref{sec:abs_CR}), which may display anomalies, we use the \textit{strict competitive ratio $\sigma(ALG)$}, which is the worst-case ratio between $ALG(I)$ and $CIOPT(I)$ over \textit{all} possible instances $I$ of the problem:
\begin{align}
&&& \sigma(ALG)=\sup_{I}\frac{ALG(I)}{ CIOPT(I)}.&
\end{align}
Observe that the strict competitive ratio derives from the asymptotic competitive ratio by forcing the constant $const$ to be zero.

We begin by stating the key properties of OOBSRP in Section~\ref{sec:basics}. Afterward, Section~\ref{sec:asymp_opt} establishes almost sure asymptotic optimality of \textit{Reopt} in all the examined systems under general stochastic conditions. Section~\ref{sec:asy_comp_ratio} and Section~\ref{sec:abs_CR} examine the asymptotic and the strict competitive ratios of \textit{Reopt}, respectively.


\subsection{Properties of OOBSRP} \label{sec:basics}
The results of this section are used throughout the following discussion. We state bounds for a warehouse traversal in Lemma~\ref{prop:cover} as well as for the results of $Reopt$  and $CIOPT$ in Lemmas~\ref{prop:UB1}--\ref{prop:simple}, respectively. We  conclude by showing asymptotic optimality of \textit{Reopt} for a special OOBSRP case, where each order contains exactly \textit{one} item.

Let us denote an OOBSRP instance with $n$ orders as $I(n)$. We denote as $I(n)^{r=0}$ the variant of some given instance $I(n)$, in which all the orders are instantly available, i.e., with $r_1=r_2=\ldots=r_n=0$. 

\begin{lemma}[Upper bound for the warehouse traversal]\label{prop:cover}
Notwithstanding the cart technology -- a pushcart or a robot -- and given an arbitrary starting position of the picker, the shortest time needed to visit all the picking locations in the warehouse and return to any given location is bounded from above by the following constant $u$. The bound $u$ is tight.
\begin{align}
    u=(a+1)W+2L.
\end{align}
\end{lemma}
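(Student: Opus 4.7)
The plan is to prove the upper bound constructively and then exhibit a matching tight instance.

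For the upper bound, the main tool is the S-shape tour. Starting at one of the four warehouse corners, the picker traverses aisle~1 fully (contributing $W$ of vertical movement), moves along the adjacent cross-aisle to aisle~2, traverses aisle~2 fully, and so on until all $a$ aisles have been covered. Any such S-shape tour has length exactly $aW + L$: the $a$ aisle traversals contribute $aW$ and the cumulative horizontal cross-aisle moves between consecutive aisles sum to the horizontal extent $L$ of the warehouse. The ending corner is horizontally opposite to the starting one when $a$ is even, and diagonally opposite when $a$ is odd. To produce a tour from an arbitrary $p_1$ to an arbitrary $p_2$, I would select the best of the four S-shape variants indexed by their starting corner, appending a prefix from $p_1$ to the starting corner and a suffix from the ending corner to $p_2$. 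Writing $\Delta_v := d(p_1, c_{\text{start}}^v) + d(c_{\text{end}}^v, p_2)$, a direct rectilinear computation gives $\sum_{v=1}^{4} \Delta_v = 4(L+W)$ for both parities of $a$, because the $p_1$- and $p_2$-dependent terms cancel pairwise (opposite corners have complementary rectilinear distances to any given point); by pigeonhole some variant $v^\star$ satisfies $\Delta_{v^\star} \leq L + W$, and the resulting tour has length at most $(aW + L) + (L + W) = (a+1)W + 2L = u$.

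For tightness, I would take $a$ odd, $b = 2$ (only top and bottom cross-aisles), set $p_1 = p_2$ at the top-left corner $(0, W)$, and choose picking locations that force the picker to visit every point of every aisle. Any feasible tour must incur horizontal movement at least $2L$, since it begins and ends at $x = 0$ yet must reach the aisle at $x = L$. For vertical movement, let $v_i$ and $s_i$ denote the total unsigned and net signed vertical movement within aisle $i$, respectively; full coverage forces $v_i \geq W$, with equality only for a single monotone traversal (in which case $s_i = \pm W$). A total-variation argument shows that whenever $s_i \neq \pm W$ the trajectory in aisle $i$ has variation at least $2W$, since any path that reaches both $y = 0$ and $y = W$ with net displacement lying outside $\{\pm W\}$ must span the full width $W$ at least twice. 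Because the tour returns to $y = W$, we have $\sum_i s_i = 0$; if all $s_i \in \{\pm W\}$, this sum would be a nonzero integer multiple of $W$ as $a$ is odd, a contradiction. Hence at least one aisle satisfies $v_i \geq 2W$, giving $\sum_i v_i \geq (a-1)W + 2W = (a+1)W$. Combining the horizontal and vertical bounds yields a tour length of at least $(a+1)W + 2L = u$, matching the upper bound.

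The main obstacle will be making the total-variation step rigorous when aisle $i$ is visited multiple times. I would address this by decomposing $v_i = \sum_j v_j$ over the individual visits, using the per-visit bound $v_j \geq 2(M_j - m_j) - |e_j - x_j|$ where $[m_j, M_j]$ is the range of $y$-values reached in visit $j$ and $e_j, x_j \in \{0, W\}$ are its entry and exit heights, invoking $\bigcup_j [m_j, M_j] \supseteq [0, W]$ for coverage together with $\sum_j (x_j - e_j) = s_i$, and then a short case analysis on the number of monotone versus round-trip visits to conclude that $v_i \geq 2W$ whenever $s_i \neq \pm W$.
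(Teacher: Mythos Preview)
Your upper-bound argument follows the same decomposition as the paper: an S-shape traversal of length $aW+L$ plus a prefix from $p_1$ to the starting corner and a suffix from the ending corner to $p_2$, with the combined prefix/suffix bounded by $W+L$. The only difference is how that last bound is obtained: the paper fixes the corner constructively by comparing which of $p_1,p_2$ is closer to an outer cross-aisle (respectively outer aisle) and then bounds the vertical and horizontal components of the detour separately, whereas you average over the four corner choices and apply pigeonhole. Your averaging argument is clean and avoids the case split; both are equally valid routes to the same inequality.

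On tightness, you go beyond the paper: the appendix proof establishes only the upper bound and does not exhibit a matching instance. Your parity argument for odd $a$ with $b=2$ is correct. The step you flagged as the obstacle---showing $v_i\geq 2W$ whenever $s_i\notin\{\pm W\}$ under multiple visits---can indeed be closed by the case analysis you sketch: since each visit contributes a signed displacement in $\{-W,0,W\}$, the number $k$ of traversal visits satisfies $k\neq 1$ when $s_i\notin\{\pm W\}$; if $k\geq 2$ then $v_i\geq kW\geq 2W$, and if $k=0$ then all visits are round trips whose ranges cover $[0,W]$, giving $v_i\geq 2\sum_j(M_j-m_j)\geq 2W$. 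So the plan is sound; just make the $k$-split explicit rather than leaving it as a ``short case analysis.''
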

\begin{proof} See Electronic Companion~\ref{sec:Proof_Lemma_warehouse_traversal}.
\end{proof}

\begin{lemma}[Upper bound for $Reopt(I)$]
\label{prop:UB1}
In \textit{Pcart-N}, \textit{Pcart}, and \textit{Robot}, the results of the respective \textit{Reopt} policy for some instance $I(n)$ cannot be worse than the following bound:
\begin{align}
Reopt(I(n))\leq r_n+ CIOPT(I(n)^{r=0})+u  \label{eq:UB_reopt}
\end{align}
\end{lemma}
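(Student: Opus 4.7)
The plan is to exhibit, for each of the three picking systems, an explicit feasible continuation of the \textit{Reopt} trajectory after time $r_n$ whose length can be bounded by $u + CIOPT(I(n)^{r=0})$. Since \textit{Reopt} reoptimizes to optimality from its current state---at $r_n$ itself in the interventionist variants, and at the next depot visit after $r_n$ in \textit{Pcart-N}---its actual completion cannot be worse than that of this feasible continuation. Adding $r_n$ to the upper bound on the post-$r_n$ portion will give \eqref{eq:UB_reopt}.

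The first step is to fix the state of \textit{Reopt} at $r_n$: the picker sits at some warehouse location, a set $F \subseteq \{o_1,\ldots,o_n\}$ of orders is already completed, a (possibly empty) subset $P$ of orders belongs to the current partially-picked batch, and the remaining orders form a set $N$. Because no further orders arrive after $r_n$, the remaining input is fully known to the reoptimizer. The feasible continuation is two-phase: in phase (a) the picker finishes the outstanding items of $P$ from her current position and then travels to the depot; in phase (b), starting from the depot, she executes a complete-information-optimal schedule for the orders in $N$ treated as released at time~$0$. Phase (a) is a warehouse traversal that ends at a designated location, so by Lemma~\ref{prop:cover} it takes at most $u$ units of time. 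Phase (b) is a CIOPT instance on the subset $N$ of orders released at $0$.

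The second step is to bound phase (b) by $CIOPT(I(n)^{r=0})$ via a routine monotonicity argument: starting from an optimal schedule for the full instance $I(n)^{r=0}$ and deleting the orders not in $N$ from every batch yields a feasible schedule for $N$ in which batch routes are weakly shorter (TSP over a subset of picking locations) and each batch therefore starts weakly earlier, so the total completion time is no greater. Combining the two phases with $r_n$ yields the claimed inequality.

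The main delicacy is the \textit{Pcart-N} case, in which reoptimization is confined to depot visits and \textit{Reopt} cannot choose the above continuation at $r_n$ directly; instead it must first complete the previously-committed current batch (again a warehouse traversal ending at the depot, hence at most $u$ by Lemma~\ref{prop:cover}) and only then reoptimize at the depot over whatever orders remain, at which point the same monotonicity argument bounds the depot-reoptimized portion by $CIOPT(I(n)^{r=0})$. A secondary point is that in \textit{Robot} the completion time is measured at the last pick rather than at a depot return, so forcing phase (a) to end at the depot is merely a harmless overestimate; nevertheless, stating the feasible continuation uniformly across all three systems---with the correct treatment of the partially-filled cart and of which orders \textit{Reopt} is allowed to rebatch---is the portion that requires the most careful bookkeeping.
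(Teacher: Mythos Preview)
Your proposal is correct and follows essentially the same approach as the paper: at time $r_n$ exhibit the feasible continuation ``finish the open batch and go to the depot in at most $u$ time, then pick the remaining orders optimally,'' and use that \textit{Reopt}'s reoptimized plan can only be better. The paper's version is terser---it simply follows the full $CIOPT(I(n)^{r=0})$ route in phase~(b) rather than invoking the subset-monotonicity argument you spell out (which the paper defers to Lemma~\ref{prop:simple})---and it does not separately treat the \textit{Pcart-N} timing issue that you handle explicitly; your extra care there is a genuine improvement in rigor but not a different idea.
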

\begin{proof}
At time $r_n$, when the last order arrives, \textit{Reopt} reoptimizes having the complete information and requires no more time than the following simple policy: Complete the currently open batch and return to the depot in at most $u$ time (cf. Lemma~\ref{prop:cover}), then follow the route of $CIOPT$ for the instance $I(n)^{r=0}$ to collect the remaining orders.  
 \end{proof}

For \textit{Pcart} and \textit{Pcart-N}, let us define the \textit{makespan} $\mathcal{M}(o_j)$ of order $o_j$ as the length of the shortest route that starts at the depot, covers all items $s_j^{i}\in o_j$, and ends at the depot. Similarly, for $Robot$, we define the \textit{makespan} $\mathcal{M}(o_j)$ of order $o_j$ as the length of the shortest route that starts in any picking location $s_j^{s}\in o_j$, covers all picking locations in $o_j$, and ends at any arbitrary picking location $s_j^{t}\in o_j$.

\begin{lemma}
\label{prop:LB1}
In \textit{Pcart-N}, \textit{Pcart}, and \textit{Robot}, the complete information optimum for any instance $I(n)^{r=0}$, for which all orders are initially available, cannot be better than the following bound:
\begin{align}
 CIOPT(I(n)^{r=0})\geq {\frac{\sum_{i=1}^{n}{\mathcal{M}(o_i)}}{c}}, \label{eq:LB_offline}
\end{align}
\end{lemma}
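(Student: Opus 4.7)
The plan is to exploit the defining feature of batching: every batch tour must cover \emph{all} items of \emph{every} order it contains, so its length individually dominates $\mathcal{M}(o_j)$ for each included order $o_j$. Summing those domination inequalities, divided by the cardinality of the batch (at most $c$), gives the bound.

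More concretely, I would start from an optimal schedule for $I(n)^{r=0}$, which (since all release times are zero) needs no waiting and consists of a sequence of batches $S_1,\ldots,S_f$, each with at most $c$ orders. Let $T_l$ denote the length of the picker's sub-route that collects all items in batch $S_l$: for \textit{Pcart} and \textit{Pcart-N}, $T_l$ is a depot-to-depot tour through the items of $S_l$; for \textit{Robot}, $T_l$ is the sub-path between the first and the last item of $S_l$ visited in the global picker route. Because release times are zero, $CIOPT(I(n)^{r=0})=\sum_{l=1}^{f}T_l$ in the \textit{Pcart}/\textit{Pcart-N} cases, and it equals the length of the concatenated path through all batches in the \textit{Robot} case (which is at least $\sum_{l=1}^{f}T_l$).

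The key step is the per-batch inequality
\begin{align}
T_l \;\geq\; \mathcal{M}(o_j) \qquad \text{for every } o_j \in S_l.
\end{align}
For \textit{Pcart} and \textit{Pcart-N} this is immediate from the definition of $\mathcal{M}$, since the sub-route realising $T_l$ is itself a depot-to-depot route covering all items of $o_j$. For \textit{Robot}, restrict the sub-route for $S_l$ to the subsequence of visits to items of $o_j$: by the triangle inequality this subsequence has total length at most $T_l$, while it is a valid start-item-to-end-item path through all items of $o_j$ and therefore has length at least $\mathcal{M}(o_j)$. Averaging \eqref{eq:asy_def}-style over the orders in $S_l$ (at most $c$ of them) yields $T_l \geq \tfrac{1}{c}\sum_{o_j\in S_l}\mathcal{M}(o_j)$, and summing over $l$ gives \eqref{eq:LB_offline}.

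The only subtlety, and the step I would be most careful about, is the \textit{Robot} case, because a batch sub-route is not a closed tour and different orders within one batch may be interleaved arbitrarily along the picker's path. The triangle-inequality/subsequence argument sketched above handles this cleanly, but it must be invoked explicitly; the \textit{Pcart} and \textit{Pcart-N} cases are mechanical consequences of the definitions. No further machinery from Section~\ref{sec:basics} is needed beyond the definition of $\mathcal{M}(o_j)$.
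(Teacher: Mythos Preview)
Your proof is correct and follows the same idea as the paper's. The paper's argument is a terse one-sentence intuition about the ``ideal case'' of fully packed batches with perfectly matching orders; your per-batch inequality $T_l\ge\mathcal{M}(o_j)$ followed by averaging makes that intuition rigorous (and your triangle-inequality step for the \textit{Robot} case is more careful than anything the paper spells out).
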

\begin{proof}
This bound assumes the ideal case in which each batch can be fully packed with $c$ perfectly matching orders. These orders are then collected simultaneously during the picking process at the walking cost of only one of these orders, $\mathcal{M}(o_j)$, for $o_j$ in the batch. In the case of \textit{Robot}, it further ignores possible walking time between the items of subsequent \textit{distinct} batches. 
\end{proof}

The following Lemma~\ref{prop:simple} summarizes some important straightforward relations, which we state without a proof. 

\begin{lemma}[Lower bounds for $CIOPT(I)$]
\label{prop:simple}
In \textit{Pcart-N}, \textit{Pcart}, and \textit{Robot}, and any instance $I(n)$:
\begin{align}
CIOPT(I(n))&\geq r_n &&\label{eq:simple_release}\\
CIOPT(I(n))&\geq CIOPT(I(n)^{r=0})\\
&\geq CIOPT(J^{r=0})&& \forall J\subseteq I(n)  \label{eq:subsets}
\end{align}
\end{lemma}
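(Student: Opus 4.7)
The plan is to establish each of the three inequalities via short, direct arguments based on standard relaxation and monotonicity considerations, which is exactly why the authors label them as straightforward. No probabilistic or combinatorial machinery seems necessary, so the proof should fit comfortably in a few lines per bound.

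For the first inequality $CIOPT(I(n)) \geq r_n$, I would argue that in any feasible schedule, no item of order $o_n$ can be picked before its arrival time $r_n$, since the composition of $o_n$ is only revealed at $r_n$ (or, equivalently, $r_n$ is a hard release time in $I(n)$). Hence the picker's total working time cannot end before $r_n$, irrespective of the cart technology: in \textit{Pcart-N} and \textit{Pcart} she still must pick the items of $o_n$ and return to the depot after $r_n$, while in \textit{Robot} the completion time equals the time of the last item picked, which is $\geq r_n$.

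For the second inequality $CIOPT(I(n)) \geq CIOPT(I(n)^{r=0})$, I would invoke a relaxation argument. The instance $I(n)^{r=0}$ is obtained from $I(n)$ only by dropping the release-time constraints (setting every $r_j=0$), while keeping all other constraints and the same objective. Therefore every feasible schedule for $I(n)$ is also feasible for $I(n)^{r=0}$ with the same objective value, and minimizing over a superset of feasible solutions cannot yield a worse value.

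For the third inequality $CIOPT(I(n)^{r=0}) \geq CIOPT(J^{r=0})$ with $J\subseteq I(n)$, I would build a feasible solution of $J^{r=0}$ from an optimal solution of $I(n)^{r=0}$ by keeping the batching and the sequencing restricted to the orders that belong to $J$, and by deleting from the picker route every visit whose sole purpose was to pick an item of an order in $I(n)\setminus J$. Shortcutting a feasible picker route in a rectilinear metric cannot increase the total completion time, batches remain within capacity $c$, and the route still starts (and, in the pushcart systems, ends) at the depot. The only mild subtlety, and the only point I expect to need explicit mention, is to check that this shortcutting leaves the pushcart route anchored at $l_d$, which is immediate since the original route already was.
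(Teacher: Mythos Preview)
Your proposal is correct. The paper itself omits the proof entirely, stating that the lemma ``summarizes some important straightforward relations, which we state without a proof.'' Your three arguments---release-time feasibility for the first bound, relaxation for the second, and shortcutting a restricted optimal schedule for the third---are exactly the natural justifications one would supply, and each is sound in all three picking systems. The only point worth a brief remark is that in \textit{Robot} the picker's route is a single continuous path (she does not return to the depot between batches), so the shortcutting argument for the third inequality applies directly to this path via the triangle inequality without needing separate anchoring at $l_d$.
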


\begin{proposition} \label{prop:one_item_order}
In \textit{Robot}, in the special case of OOBSRP with single-item orders, i.e., $k(o_j)=1 \ \forall j \in \{1, \ldots, n\}$, \textit{Reopt} is \textit{asymptotically optimal}, i.e., $\sigma_{asymp.}(Reopt)=1$.
\end{proposition}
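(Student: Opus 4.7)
The plan is to show the stronger inequality $Reopt(I) \leq CIOPT(I) + 2u$ for every instance $I$ in the \textit{Robot} single-item setting, which immediately gives $\sigma_{asymp.}(Reopt) \leq 1$; combined with the trivial bound $Reopt(I) \geq CIOPT(I)$, this yields the claimed asymptotic optimality. The entire argument turns on a single structural observation that is specific to single-item orders in \textit{Robot}: the quantity $CIOPT(I(n)^{r=0})$ is bounded by the warehouse constant $u$, so the release-time term $r_n$ from Lemma~\ref{prop:UB1} can be absorbed into $CIOPT(I)$ via Lemma~\ref{prop:simple}.

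Concretely, I would proceed in three short steps. First, I would argue that in \textit{Robot} with single-item orders, $CIOPT(I(n)^{r=0})$ equals the length of a shortest open tour from the depot through the $n$ item locations: because the robot returns to the depot autonomously and each order consists of a single pick, the picker never incurs any mandatory depot return and the batching imposes no additional routing constraint beyond visiting each item once. Second, I would invoke Lemma~\ref{prop:cover} to bound this open tour. Lemma~\ref{prop:cover} gives a closed tour from the depot through \emph{all} picking locations of the warehouse of length at most $u$; deleting the return leg and using the triangle inequality to shortcut the locations not appearing in $I$ produces an open tour from the depot through the $n$ item locations of length at most $u$. Hence $CIOPT(I(n)^{r=0}) \leq u$.

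Third, I would combine this with the two generic bounds from Section~\ref{sec:basics}. Lemma~\ref{prop:UB1} gives
\begin{equation*}
Reopt(I(n)) \;\leq\; r_n + CIOPT(I(n)^{r=0}) + u \;\leq\; r_n + 2u,
\end{equation*}
and inequality~(\ref{eq:simple_release}) of Lemma~\ref{prop:simple} gives $CIOPT(I(n)) \geq r_n$, so that
\begin{equation*}
Reopt(I(n)) \;\leq\; CIOPT(I(n)) + 2u.
\end{equation*}
Since $2u$ depends only on the warehouse geometry and not on the number or characteristics of the orders, the definition of asymptotic competitiveness in~(\ref{eq:asy_def}) (with $\alpha=1$ and $const = 2u$) is satisfied, so $\sigma_{asymp.}(Reopt) \leq 1$. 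The matching lower bound $\sigma_{asymp.}(Reopt) \geq 1$ is immediate because no online algorithm can beat the complete-information optimum on any single instance.

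I do not expect a genuine obstacle. The only point that requires care is justifying $CIOPT(I(n)^{r=0}) \leq u$ rigorously, and this is essentially a one-line consequence of Lemma~\ref{prop:cover} once one notes the two special features of the setting: the absence of forced depot returns (\textit{Robot}) and the absence of intra-order travel ($k(j)=1$). Either feature failing would destroy the argument -- with a pushcart the offline optimum accumulates one depot round-trip per batch and can be of order $nu$, and with multi-item orders the batching constraint forces revisits that also drive $CIOPT(I(n)^{r=0})$ far above $u$ -- which is exactly why the proposition is stated only for \textit{Robot} with single-item orders.
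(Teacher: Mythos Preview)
Your proof is correct and follows essentially the same route as the paper: both arguments hinge on the observation that in \textit{Robot} with single-item orders the work remaining after $r_n$ (equivalently, $CIOPT(I(n)^{r=0})$) is bounded by a warehouse constant, then combine this with $CIOPT(I)\geq r_n$ from Lemma~\ref{prop:simple}. The only cosmetic difference is that you route through Lemma~\ref{prop:UB1} explicitly and obtain the constant $2u$, whereas the paper argues directly that a single S-shape traversal after $r_n$ suffices.
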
 
\begin{proof} Observe that \textit{Robot} with single-item orders is equivalent to the online nomadic TSP. So, at the arrival of the last order at time $r_n$, \textit{Reopt} reoptimizes having the complete information and collects the remaining orders in no more time than that required by the following simple policy: Traverse the warehouse in S-shape motion once. This traversal requires a constant amount of time, dependent only on the warehouse geometry. Using (\ref{eq:simple_release}) and the definition of the asymptotic convergence in (\ref{eq:asy_def}), we complete the proof. 
\end{proof}

\subsection{Asymptotic optimality of \textit{Reopt}} \label{sec:asymp_opt}

In this section, we show that, under general stochastic assumptions, when the number of incoming orders is sufficiently large, \textit{Reopt} is asymptotically optimal with a probability of one (\textit{almost surely}) in all the examined systems: \textit{Pcart-N}, \textit{Pcart}, and \textit{Robot}.

In a probabilistic analysis, we consider \textit{random} instances, and such an instance with $n\in\mathbb{N}$ orders is denoted as $I(n)^{rand}$. We denote the involved random variables in capital letters, e.g., orders $O=(O_1, O_2, \ldots, O_n)$ and arrivals $R=(R_1,R_2,..,R_n)$. 
We assume that \textit{distinct} customer orders $O_j$ in $O$ are \textit{independently identically distributed (i.i.d.)}. For mathematical completeness, we redefine $O_j$ as a multivariate random variable $O_j=(K_j,S_j^1,...,S_j^{K_j})$ with $K_j$ picking locations $S_j^1,\ldots,S_j^{K_j}$. Further, in case of \textit{Robot}, we exclude the trivial case of single-item orders (cf. Proposition~\ref{prop:one_item_order}) and assume $\mathbb{E}[ K_1 ]>1$. 
Observe that the specified stochastic assumptions explicitly allow product items $S_j^i$ and $S_j^l$ of the \textit{same} order $O_j$ to be correlated, such as phones and matching protective cases, which are usually ordered together. 

We assume that the random sequences $R$ and $O$ are independent. 
In the following, we will discuss two common stochastic models for the arrival time sequence $R$. 

We begin by examining the order statistics property of arrival times in Section~\ref{sec:orderstat}, an order arrival model  suggested in the context of online routing problems by \citet{Jaillet2008}. This model considers a fixed number $n$ of orders, with arrival times that are \textit{independent and identically distributed (i.i.d.)} according to a fixed but \textit{arbitrary} distribution. The \textit{i.i.d.} assumption is natural in e-commerce warehousing, since customers act independently. The distribution can be arbitrary (see Figure~\ref{fig:ord_stat}), e.g., uniform, single-peaked, skewed, multimodal (the latter applies, for instance, when most orders arrive during lunch breaks and evenings). Order statistics arrival model can have both \textit{finite} and \textit{infinite} support; in the former case, the orders arrive within a finite time interval, like a shift or a day, while in the latter, order arrival times are unrestricted. 
 
For an example of the latter case, consider picking a stock of 
$n$ limited-edition cell phones from a warehouse that exclusively contains those items. The picking process concludes when all $n$ items have been picked. 

The analysis in Section~\ref{sec:poisson}, where order arrivals follow a homogeneous Poisson process, builds upon the results of Section~\ref{sec:orderstat}. 
We proceed with a formal definition of the two models and the analytical results in Sections~\ref{sec:orderstat} and Section~\ref{sec:poisson}.

\begin{figure}
    \begin{subfigure}[b]{0.31\textwidth}
        \includegraphics[width=\textwidth]{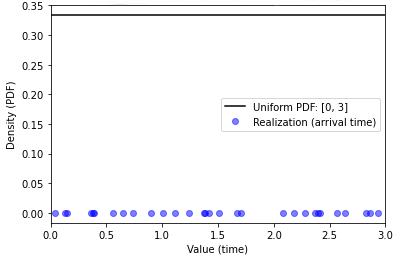}
        \caption*{\scriptsize{(a) Uniform distribution}} 
        \label{fig:pushcart}
    \end{subfigure}
    ~ 
    \begin{subfigure}[b]{0.3\textwidth}
        \includegraphics[width=\textwidth]{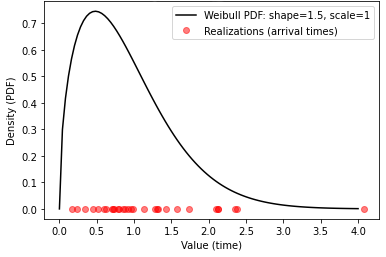}
        \caption*{\scriptsize{(b) Weibull distribution}}
        \label{fig:robot}
    \end{subfigure}
    \begin{subfigure}[b]{0.285\textwidth}
        \includegraphics[width=\textwidth]{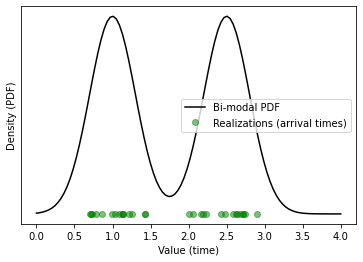}
        \caption*{\scriptsize{(c) Bi-modal distribution}}
        \label{fig:robot}
    \end{subfigure}
    \caption{Realizations of 30 order arrival times with the order statistics model for different distributions\newline \scriptsize{\textit{Note.}The three graphs show the probability distribution function (PDF) of the respective distributions (line) and the dots on the $x$-axis represent the realizations of the arrival times}}\label{fig:ord_stat}
\end{figure}

\subsubsection{Case of the order statistics property of arrival times} \label{sec:orderstat}

The \textit{order statistics property of arrival times} can be defined as follows. 
The arrival times of the orders are i.i.d. realizations $Y_j, j\in \{1,...,n\},$ of a generic random variable $Y\geq 0$ with an arbitrary, given distribution and mean $\mu_Y<\infty$. To derive the arrival times of the orders, we sort the realizations of $Y$, the $j^{th}$ arrival time is the $j^{th}$ order statistic: $R_j=Y_{(j)}$,  $Y_{(1)}\leq Y_{(2)}\leq ... \leq Y_{(n)}$.

Theorem \ref{th:asymp_opt_makesp} states one of the main results of this work.

\begin{theorem}
\label{th:asymp_opt_makesp}
In \textit{Pcart-N}, \textit{Pcart}, and \textit{Robot}, given the specified stochastic assumptions, including the order statistics property of arrival times:
\begin{align}
  &&& \lim_{n\rightarrow \infty } \ \frac{Reopt(I(n)^{rand})}{ CIOPT(I(n)^{rand})}=1  \ \ \text{ a.s.} \label{eq:asymp_opt}
\end{align}
\end{theorem}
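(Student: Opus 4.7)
The plan is to sandwich the ratio between its trivial lower bound of $1$ (any feasible online schedule cannot beat the offline optimum) and an upper bound that converges to $1$ almost surely. Combining Lemma~\ref{prop:UB1} with the relation $CIOPT(I(n)) \geq CIOPT(I(n)^{r=0})$ from Lemma~\ref{prop:simple} yields
\begin{equation*}
1 \;\leq\; \frac{Reopt(I(n)^{rand})}{CIOPT(I(n)^{rand})} \;\leq\; 1 + \frac{R_n + u}{CIOPT(I(n)^{rand,\,r=0})}.
\end{equation*}
The proof therefore reduces to showing that the additive error on the right vanishes almost surely as $n\to\infty$.

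For the denominator I would apply Lemma~\ref{prop:LB1} and observe that $\{\mathcal{M}(O_i)\}_{i\geq 1}$ is an i.i.d.\ sequence of random variables bounded above by the constant $u$ (Lemma~\ref{prop:cover}), hence integrable. The strong law of large numbers then gives
\begin{equation*}
\frac{CIOPT(I(n)^{rand,\,r=0})}{n} \;\geq\; \frac{1}{cn}\sum_{i=1}^{n}\mathcal{M}(O_i) \;\longrightarrow\; \frac{\mathbb{E}[\mathcal{M}(O_1)]}{c} \quad \text{a.s.}
\end{equation*}
A short geometric check handles the three systems separately: for \textit{Pcart} and \textit{Pcart-N} any round trip from the depot to any picking location has length bounded below by a positive constant, so $\mathbb{E}[\mathcal{M}(O_1)]>0$; for \textit{Robot} the assumption $\mathbb{E}[K_1]>1$ together with the non-degeneracy of picking-location draws forces a positive expected tour length. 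Consequently $CIOPT(I(n)^{rand,\,r=0})$ grows at least linearly in $n$ almost surely.

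For the numerator I need to show $R_n/n = Y_{(n)}/n \to 0$ a.s. Because $\mu_Y<\infty$, a tail-integration bound yields $\sum_{n\geq 1}\mathbb{P}(Y_n > \epsilon n) \leq \mu_Y/\epsilon < \infty$ for every rational $\epsilon>0$, so by Borel--Cantelli the event $\{Y_n > \epsilon n\}$ occurs only finitely often almost surely, i.e.\ $Y_n/n \to 0$ a.s. A deterministic argument then lifts this to the maximum: on the a.s.\ event that $Y_i/i<\epsilon$ for all $i\geq N=N(\epsilon,\omega)$, one has $\max_{i\leq n} Y_i \leq \max\!\big(\max_{i\leq N} Y_i,\, \epsilon n\big) \leq \epsilon n$ for all sufficiently large $n$, giving $Y_{(n)}/n \to 0$ a.s. Combining this sublinear growth of $R_n$ with the linear lower bound on $CIOPT(I(n)^{rand,\,r=0})$ drives the error term to zero and closes the sandwich.

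The main obstacle is the third step. Since $Y$ may have unbounded support and only a first-moment assumption $\mu_Y<\infty$ is available, there is no deterministic cap on $R_n$ and the sublinear growth has to be extracted from the tail behaviour of $Y$ via Borel--Cantelli; the usual pitfall is that the naive union bound $\mathbb{P}(Y_{(n)}>\epsilon n)\leq n\,\mathbb{P}(Y>\epsilon n)$ is not directly summable, so one instead proves $Y_n/n\to 0$ a.s.\ termwise and then passes to the maximum by the pointwise argument above. Once this is in place, the rest is a clean assembly of Lemmas~\ref{prop:cover}--\ref{prop:simple} with the SLLN, and the argument is uniform across all three systems \textit{Pcart-N}, \textit{Pcart}, and \textit{Robot}.
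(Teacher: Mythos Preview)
Your proof is correct and follows essentially the same route as the paper: sandwich the ratio, use Lemma~\ref{prop:UB1} and Lemma~\ref{prop:simple} to obtain $1+\frac{R_n+u}{CIOPT(I(n)^{rand,r=0})}$ as an upper bound, control the denominator via Lemma~\ref{prop:LB1} and the SLLN on the i.i.d.\ makespans, and show $R_n/n\to 0$ a.s. The only differences are cosmetic: the paper cites Lemma~6 of \citet{Jaillet2008} for $R_n/n\to 0$ whereas you give a self-contained Borel--Cantelli argument, and the paper dispatches the degenerate case $\mathbb{E}[\mathcal{M}(O_1)]=0$ explicitly (trivial for \textit{Pcart}/\textit{Pcart-N}, Proposition~\ref{prop:one_item_order} for \textit{Robot}) rather than invoking an unstated ``non-degeneracy'' assumption as you do.
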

\begin{proof}
Following a similar line of proof as \cite{Jaillet2008}, we first recall two relations from Lemmas~\ref{prop:simple} and~\ref{prop:UB1}, respectively:
\begin{align}
&&CIOPT(I(n)^{rand})  &\geq  CIOPT(I(n)^{rand,r=0})  \label{eq:Opt_bound} \\ 
&&  Reopt(I(n)^{rand})  &\leq CIOPT(I(n)^{rand,r=0}) + R_n + u \label{eq:Reopt_bound} 
\end{align}
Together, they imply
\begin{align}
&&&\frac{Reopt(I(n)^{rand})}{ CIOPT(I(n)^{rand})} 
 \leq  \ 1 + \frac{n}{CIOPT(I(n)^{rand,r=0})} \cdot \frac{R_n+u}{n} \label{eq:ratio_LB}
\end{align}
Since $\mu_Y<\infty$, Lemma 6 of \cite{Jaillet2008} can be applied:
\begin{align}
    &&& \lim_{n\rightarrow \infty}\frac{R_n}{n}+\frac{u}{n}=0 \qquad \text{a.s.} \label{eq:G_conv}
\end{align}
At this point, the proof of \cite{Jaillet2008} developed for the capacitated online TSP is not applicable and we proceed in a unique way. To show that $\frac{n}{CIOPT(I(n)^{rand,r=0})}$ is bounded from above by a constant (which does not depend on $n$), we use the lower bound from Lemma~\ref{prop:LB1}:
\begin{align}
&&    \frac{ CIOPT(I(n)^{rand,r=0})}{n} 
& \geq \frac{1}{c} \cdot  \frac{\sum\limits_{j=1}^n \mathcal{M}(O_j)}{n} \ \  \label{eq:average_tot_makespan}
\end{align}
Recall that essentially makespan $\mathcal{M}(O_j)$ of order $O_j$ is the shortest picker route to collect all items in $O_j$. Given the stochastic assumptions, $\mathcal{M}(O_j), j \in \{1,...,n\},$ are i.i.d.. So by the Strong Law of Large Numbers:
\begin{align}
&&& \lim_{n\rightarrow \infty} \frac{1}{c} \cdot \frac{\sum\limits_{j =1}^n \mathcal{M}(O_j)}{n} = \frac{1}{c} \cdot \mathbb{E}[\mathcal{M}(O_1)]  >0 \qquad \text{ a.s.}   \label{eq:as_conv_UB}
\end{align}
We only consider the case of a strictly positive $\mathbb{E}[\mathcal{M}(O_1)]$. Otherwise, the problem is trivial, and \textit{Reopt} is optimal per definition in \textit{Pcart-N} and \textit{Pcart}; in \textit{Robot}, the problem reduces to the single-item order case for which \textit{Reopt} is asymptotically optimal by Proposition~\ref{prop:one_item_order}. 

Observe that $\frac{1}{c} \cdot \mathbb{E}[\mathcal{M}(O_1)]$ is a constant that does not depend on $n$. The convergence of (\ref{eq:as_conv_UB}) implies that the right-hand sight of (\ref{eq:average_tot_makespan}) is larger than some \textit{nonzero} constant $\zeta<\frac{1}{c}\mathbb{E}[\mathcal{M}(O_1)]$ a.s. if the number of orders $n$ is large enough, i.e.,  $\forall n>n_0 \text{ for some }n_0$:
\begin{align}
& &&& \frac{CIOPT(I(n)^{rand,r=0})}{n} \geq    \zeta  
\Leftrightarrow \   \frac{n}{CIOPT(I(n)^{rand,r=0})}\leq \frac{1}{\zeta} \qquad \text{ a.s. } \label{eq:F_bound_rev}
\end{align}
This, together with (\ref{eq:ratio_LB}) and (\ref{eq:G_conv}), completes the proof:
\begin{align}
&& \frac{Reopt(I(n)^{rand})}{ CIOPT(I(n)^{rand})} 
\leq & \  1 + \frac{n}{CIOPT(I(n)^{rand,r=0})} \cdot \frac{R_n+u}{n} \nonumber 
 \xrightarrow{n \rightarrow \infty} 1 +0 \qquad  \text{a.s.} 
\end{align} 
\end{proof}

The interpretation of Theorem~\ref{th:asymp_opt_makesp} for an order statistic with a \textit{finite}-support arrival-time distribution $Y$ such as during a shift, is quite intuitive. As the number of orders arrived during this shift increases, the picker becomes overloaded, and most orders will have to be picked \textit{after} the shift ends, regardless of the picking policy. Therefore, Reopt, which picks optimally after the end of the shift when no more orders arrive, converges to optimality. 
However, the interpretation is less straightforward for a random arrival time distribution $Y$ with \textit{infinite} support, such as a normal distribution in the limited-edition cell phone example in Section~\ref{sec:asymp_opt}. 
In this case, as the number of available products increases, $n\rightarrow \infty$, the arrival time of the last order placement, $R_n$, will extend to infinity (as for arbitrary $x<\infty$, $\mathbb{P}(R_n>x)=1-\mathbb{P}(Y\leq x)^n\rightarrow 1$ by definition of $Y$'s infinite support). This implies that the picking system remains \textit{dynamic} throughout the entire picking process. Even for these systems, Theorem~\ref{th:asymp_opt_makesp} proves that Reopt can keep up with an optimal picking strategy, with probability one. 

\subsubsection{Case of the homogeneous Poisson process for order arrivals} \label{sec:poisson}

Let random variables $X_j, j \in \mathbb{N},$ which represent the time between the $j^{th}$ and the $(j-1)^{th}$ order arrival, be i.i.d. exponentially distributed with mean $\frac{1}{\lambda}$, then the sequence of order arrival times follows a \textit{homogeneous Poisson process} with rate  $\lambda$.

A central notion for the analysis of the Poisson-arriving orders is the \textit{queue} 
$Q^{Reopt}(I(n)^{rand})$, which is the set of pending orders at time $R_n$ in instance $I(n)^{rand}$ for policy \textit{Reopt}.

We prove in Theorem~\ref{th:a.s.Poisson} that \textit{Reopt} is \textit{a.s.} asymptotically optimal if $ \lim_{n\rightarrow \infty} \frac{\vert Q^{Reopt}(I(n)^{rand})\vert}{n} = 0 $, i.e. if with probability one, the queue of pending orders does not grow as fast as the total number of received orders. Note that this condition would be necessary under the \textit{Reopt} policy to safeguard the warehousing system against potential instability.  

Because the queue length $\vert Q^{Reopt}(I(n)^{rand})\vert$ is generated by two independent sequences of i.i.d. random variables -- the picking locations and the inter-arrival times -- we believe that by the 0-1-Laws, the probability of the event $ \lim_{n\rightarrow \infty} \frac{\vert Q^{Reopt}(I(n)^{rand})\vert}{n} = 0 $ is either 1 or zero for a given rate $\lambda$, and we suspect that the probability increases monotonously in $\lambda$.  
In other words, we believe that $ \lim_{n\rightarrow \infty} \frac{\vert Q^{Reopt}(I(n)^{rand})\vert}{n} = 0 $ is true for small arrival rates until a fixed threshold $\overline{\lambda}$ (see Conjecture~\ref{con:ConjecturePoisson}).

In the case of asymptotically increasing arrival rates, the probability that \textit{Reopt} approaches optimality converges to 1 as well, as we prove in  Theorem~\ref{th:stoch_optimality}. 

Observe the \textit{stand-alone character} of this section's arguments in the warehousing research. The available queuing systems analysis, which is usually employed when order arrivals follow a Poisson process, requires i.i.d. service times of the batches. The latter is the case, for instance, for the `first come first served' policy, when orders are serviced in the sequence of their arrival. This is not the case for \textit{Reopt} and neither it is the case for $CIOPT$, where the service times of the batches are interdependent due to reoptimization. 

\begin{theorem}
\label{th:a.s.Poisson}
In \textit{Pcart-N}, \textit{Pcart},  and \textit{Robot}, if $ \lim_{n\rightarrow \infty} \frac{\vert Q^{Reopt}(I(n)^{rand})\vert}{n} = 0 $ a.s., then:
\begin{align}
&&&   \lim_{n\rightarrow \infty } \ \frac{Reopt(I(n)^{rand})}{ CIOPT(I(n)^{rand})}=1  \ \ \text{ a.s.} \label{eq:asymp_opt}
\end{align}
\end{theorem}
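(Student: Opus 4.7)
The strategy is to mimic the argument of Theorem~\ref{th:asymp_opt_makesp}, but to replace the upper bound of Lemma~\ref{prop:UB1} with a tighter one that charges only the work still pending at the last arrival $R_n$. This replacement is essential because for the homogeneous Poisson process the strong law of large numbers applied to the exponential inter-arrival times yields $R_n/n \to 1/\lambda$ almost surely, so the term $R_n+u$ in Lemma~\ref{prop:UB1} grows linearly in $n$. Plugging Lemma~\ref{prop:UB1} directly into the ratio together with the lower bound $CIOPT(I(n)^{rand}) \geq R_n$ would only give a $2+o(1)$ bound, not asymptotic optimality; the queue hypothesis must therefore enter through a sharper upper bound on $Reopt$.

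The sharpened bound I would establish is
\begin{equation*}
Reopt(I(n)^{rand}) \;\leq\; R_n + O(1) + CIOPT\bigl(Q^{Reopt}(I(n)^{rand})^{r=0}\bigr),
\end{equation*}
where the $O(1)$ term (at most a small multiple of $u$) absorbs the cost of finishing the current open batch of at most $c$ orders and repositioning the picker after time $R_n$, which is a constant independent of $n$ by Lemma~\ref{prop:cover}. Because every single-order makespan is bounded by $u$, a trivial one-order-per-batch schedule gives
\begin{equation*}
CIOPT\bigl(Q^{Reopt}(I(n)^{rand})^{r=0}\bigr) \;\leq\; u \cdot \bigl|Q^{Reopt}(I(n)^{rand})\bigr|.
\end{equation*}
Combining with the lower bound $CIOPT(I(n)^{rand}) \geq R_n$ from Lemma~\ref{prop:simple} then yields
\begin{equation*}
\frac{Reopt(I(n)^{rand})}{CIOPT(I(n)^{rand})} \;\leq\; 1 + \frac{O(1)}{R_n} + u \cdot \frac{|Q^{Reopt}(I(n)^{rand})|/n}{R_n/n}.
\end{equation*}
By the strong law of large numbers, $R_n/n \to 1/\lambda>0$ almost surely (so in particular $R_n\to\infty$), and the theorem's queue hypothesis $|Q^{Reopt}(I(n)^{rand})|/n \to 0$ a.s.\ drives the last fraction to zero. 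The trivial inequality $Reopt \geq CIOPT$ then pins the limit to $1$ almost surely, establishing (\ref{eq:asymp_opt}).

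The main obstacle I anticipate is making the sharpened upper bound on $Reopt$ rigorous in each of the three systems. In \textit{Pcart-N}, the reoptimization at the arrival of the last order may be deferred until the picker's next depot visit, and one must argue that this deferral costs at most a constant multiple of $u$. In \textit{Pcart} and \textit{Robot}, a reoptimization does take place at $R_n$, but items of some orders in the current open batch may already have been committed, forcing those orders to be completed within that batch; here one has to verify that finishing the partial batch first and only then picking the remaining queue optimally adds merely a bounded geometric overhead. Once this system-specific bookkeeping is wrapped uniformly into a single $O(u)$ constant, the remainder of the argument is a routine application of the strong law of large numbers together with the queue hypothesis.
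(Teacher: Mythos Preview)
Your proposal is correct and follows essentially the same route as the paper: bound $Reopt(I(n)^{rand})-R_n$ by a constant multiple of $u$ times the queue length (plus an $O(u)$ term for the open batch), use $CIOPT(I(n)^{rand})\geq R_n$, and conclude via the strong law of large numbers $R_n/n\to 1/\lambda$ together with the queue hypothesis. The only cosmetic difference is that the paper packs $c$ orders per traversal, writing the bound as $\bigl(\tfrac{|Q^{Reopt}(I(n)^{rand})|}{c}+1\bigr)u$, whereas you use one order per batch; both constants are immaterial for the limit, and the system-specific bookkeeping you flag is exactly what the paper absorbs into the ``$+1$'' term.
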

\begin{proof}
Exploiting the i.i.d. inter-arrival time property of the Poisson process, the proof combines the Strong Law of Large Numbers with the fact that the warehouse has bounded dimensions. We refer to the Electronic Companion~\ref{sec:Proof_Poisson_a.s.} for details. 
\end{proof}

Observe that Theorem~\ref{th:a.s.Poisson} demonstrates the asymptotic optimality of Reopt  not only for a.s. \textit{stable} picking systems, but also for certain systems with 
 high order arrival rates, where there is a non-zero probability that the queue length grows to infinity over time. For instance, it applies to picking systems with queues growing as fast as $\sqrt{n}$, which is sublinear in $n$. In summary, the statement of Theorem~\ref{th:a.s.Poisson} is quite general, extending from small to slightly too high order arrival rates. 

\begin{conjecture}\label{con:ConjecturePoisson}
In \textit{Pcart-N}, \textit{Pcart}, and \textit{Robot}, there is a rate $\tilde{\lambda}$ such that  $\mathbb{P}( \lim_{n\rightarrow \infty} \frac{\vert Q^{Reopt}(I(n)^{rand})\vert}{n} = 0) =1$ for all $\lambda<\tilde{\lambda}$. Thus, for all rates $\lambda<\tilde{\lambda}$:
\begin{align}
&&&   \lim_{n\rightarrow \infty } \ \frac{Reopt(I(n)^{rand})}{ CIOPT(I(n)^{rand})}=1  \ \ \text{ a.s.} \label{eq:asymp_opt}
\end{align}
\end{conjecture}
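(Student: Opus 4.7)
By Theorem~\ref{th:a.s.Poisson}, it suffices to exhibit a threshold $\tilde\lambda>0$ such that $|Q^{Reopt}(I(n)^{rand})|/n\to 0$ almost surely for every $\lambda<\tilde\lambda$. The event $A=\{\lim_{n\to\infty}|Q^{Reopt}(I(n)^{rand})|/n=0\}$ is an asymptotic property of the i.i.d.\ sequence $((X_j,O_j))_{j\ge 1}$: a finite modification only perturbs Reopt's state on a bounded time window and cannot alter a limiting fraction. A standard 0-1 law therefore gives $\mathbb{P}(A)\in\{0,1\}$, so it suffices to show $\mathbb{P}(A)>0$ for small $\lambda$. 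I would do this via a Foster-Lyapunov drift analysis of Reopt's embedded ``queue-before-batch'' Markov chain.

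Let $\tilde Q_i$ denote the queue length immediately before Reopt commences its $i$-th batch. Lemma~\ref{prop:cover} ensures that after any reoptimization inside a batch, the planned remaining path has length at most $u$; in particular, two successive interventions inside a single batch cannot be more than $u$ time units apart, since otherwise the batch would already be completed. Consequently, if $k$ Poisson arrivals occur during a batch, its length is at most $(k+1)u$, and a Wald-type stopping-time argument yields $\mathbb{E}[\text{batch length}]\le u/(1-\lambda u)$ whenever $\lambda u<1$. When $\tilde Q_i\ge c$, cart-capacity saturation forbids any new arrival from joining the current batch, so
\[
   \mathbb{E}[\tilde Q_{i+1}-\tilde Q_i \mid \tilde Q_i\ge c]\;\le\;\frac{\lambda u}{1-\lambda u}-c,
\]
which is strictly negative whenever $\lambda<\tilde\lambda:=\tfrac{c}{(c+1)u}$. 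Foster-Lyapunov with linear test function $V(q)=q$ then yields positive recurrence of $(\tilde Q_i)$ and a stationary distribution of finite mean, so the ergodic / renewal-reward theorem delivers $|Q^{Reopt}(t)|/t\to 0$ a.s.

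Combining this with the SLLN $R_n/n\to 1/\lambda$ a.s.\ for the i.i.d.\ exponential inter-arrival times yields $|Q^{Reopt}(R_n)|/n=(|Q^{Reopt}(R_n)|/R_n)\cdot(R_n/n)\to 0$ a.s., so $\mathbb{P}(A)=1$, and Theorem~\ref{th:a.s.Poisson} completes the proof for every $\lambda<\tilde\lambda$. The main obstacle is the batch-length control for the interventionist variants (\textit{Pcart} and \textit{Robot}), where a mid-batch arrival can reshape the picker's route: one must verify pathwise that the $(k+1)u$ bound persists under the constraint that already-picked items must be completed in the current batch (cf.\ constraint (i) in Section~\ref{sec:onlineproblem}), and the Wald step needs a careful stopping-time justification because the number of Poisson arrivals during a batch feeds back on the batch's random length. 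Once these are in place, the drift analysis is classical.
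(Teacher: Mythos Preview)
The paper does \emph{not} prove this statement: it is explicitly labelled a conjecture, motivated by an informal appeal to 0--1 laws (``we believe that by the 0-1-Laws\ldots''), and is listed in Section~\ref{sec:conclusion} among the open questions. There is therefore no paper proof to compare against; you are attempting to close a gap the authors left open.

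Your Foster--Lyapunov strategy is a natural line of attack, but as written it has genuine holes. First, the drift bound is wrong: when $\tilde Q_i\ge c$, \textit{Reopt} is \emph{not} obliged to pack $c$ orders into batch $i$; the offline optimum it computes may begin with a batch of size $<c$ (e.g.\ one nearby order alone, then pair two distant ones). Moreover, your claim that ``cart-capacity saturation forbids any new arrival from joining the current batch'' is false for \textit{Pcart} and \textit{Robot}: constraint~(ii) in Section~\ref{sec:onlineproblem} allows any bin whose order has not yet been \emph{commenced} to be reassigned at each intervention. The only safe lower bound on completions per batch is $1$, which still yields a negative drift for $\lambda<1/(2u)$, so the approach survives with a smaller threshold---but the argument must be rewritten.

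Second, $(\tilde Q_i)_{i\ge 1}$ is not a Markov chain: the batch length and the number of completed orders depend on the \emph{identities} (picking locations) of the queued orders and, in \textit{Robot}, on the picker's current position. You need to run Foster--Lyapunov on the full state (queue contents $+$ picker position) with $V=|\text{queue}|$; your uniform bound ``remaining plan $\le u$ after every reoptimization'' is exactly what makes the drift estimate state-independent, but this should be made explicit. Third, the 0--1 law step is not justified and is in fact unnecessary: once Foster--Lyapunov gives positive recurrence, a regenerative-process argument (finite-mean cycles between visits to the empty queue) yields $|Q^{Reopt}(t)|/t\to 0$ a.s.\ directly, without detouring through a tail $\sigma$-algebra whose invariance under finite modifications of the input is itself nontrivial here. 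Finally, the Wald step you flag does need care: the number $k$ of arrivals during a batch and the batch length are mutually dependent, so you should define the batch end as a stopping time for the Poisson filtration and invoke Wald's identity (or a direct recursive bound $\mathbb{E}[\text{length}]\le u+\lambda u\,\mathbb{E}[\text{length}]$) properly.
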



It remains unclear whether \textit{Reopt} is asymptotically optimal a.s. for rates $\lambda\geq\tilde{\lambda}$. To prove a somewhat weaker notion of convergence, we fix the time interval $[0,t]$ (e.g., a shift) and examine the orders  $I(t)^{rand}$ arrived in this time interval $[0,t]$ given the arrival rate $\lambda$ of the underlying Poisson process. Recall that, by definition, the number of orders $N(t)$ in the interval is Poisson($\lambda\cdot t$)-distributed. We denote by $\mathbb{P}_{\lambda}$ the underlying probability measure for a given arrival rate $\lambda$. 

\begin{theorem}
\label{th:stoch_optimality}
Let $\lambda_i>0, i \in \mathbb{N}$ be any increasing sequence of arrival rates such that $\lambda_i\rightarrow \infty$ if $i\rightarrow \infty$. In \textit{Pcart-N}, \textit{Pcart}, and \textit{Robot},  the following holds true: 
 \begin{align}
&&&     \lim_{i\rightarrow \infty} \ \mathbb{E}_{\lambda_i}[\frac{Reopt(I(t)^{rand})}{CIOPT(I(t)^{rand})}]= 1 \label{eq:expected_optimality}
 \end{align}
 And for all $\epsilon>0$: 
 \begin{align}
&&&\lim_{i\rightarrow \infty} \ \mathbb{P}_{\lambda_i}(\frac{Reopt(I(t)^{rand})}{CIOPT(I(t)^{rand})} \geq 1+ \epsilon ) =0 \label{eq:stochastic_optimality}
\end{align}
\end{theorem}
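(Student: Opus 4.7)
The plan is to adapt the strategy from Theorem~\ref{th:asymp_opt_makesp} to the fixed-horizon Poisson setting, using the fact that in this model all arrivals are confined to $[0,t]$ so $R_n \leq t$ deterministically.

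First, combining Lemma~\ref{prop:UB1}, which gives $Reopt(I(t)^{rand}) \leq R_n + CIOPT(I(t)^{rand,r=0}) + u$, with Lemma~\ref{prop:simple}'s $CIOPT(I(t)^{rand}) \geq CIOPT(I(t)^{rand,r=0})$, I obtain $Reopt - CIOPT \leq R_n + u \leq t + u$. Hence, whenever $CIOPT>0$,
\[
\frac{Reopt(I(t)^{rand})}{CIOPT(I(t)^{rand})} \;\leq\; 1 + \frac{t+u}{CIOPT(I(t)^{rand,r=0})},
\]
and by Lemma~\ref{prop:LB1} the denominator is at least $\tfrac{1}{c}\sum_{j=1}^{N(t)}\mathcal{M}(O_j)$. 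The whole problem thus reduces to the asymptotic behaviour of this random sum.

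To prove (\ref{eq:stochastic_optimality}), I show that $\sum_{j=1}^{N(t)}\mathcal{M}(O_j) \to \infty$ in probability as $\lambda_i \to \infty$. Since $N(t) \sim \text{Poisson}(\lambda_i t)$, we have $N(t) \to \infty$ in probability. Conditional on $N(t) = n$, the Strong Law of Large Numbers applied to the i.i.d.\ makespans yields $\tfrac{1}{n}\sum_{j=1}^n \mathcal{M}(O_j) \to \mathbb{E}[\mathcal{M}(O_1)] > 0$ a.s.\ (the degenerate case $\mathbb{E}[\mathcal{M}(O_1)]=0$ is handled by Proposition~\ref{prop:one_item_order}). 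A two-event decomposition --- choose $n_0$ large enough that for every $n\geq n_0$ the LLN makes $\mathbb{P}(\sum_{j=1}^n \mathcal{M}(O_j) \leq M \mid N(t)=n) < \delta$, then note $\mathbb{P}_{\lambda_i}(N(t) < n_0) \to 0$ --- shows $\mathbb{P}_{\lambda_i}(CIOPT(I(t)^{rand,r=0}) \leq M) \to 0$ for every $M$. Combined with the inequality above, this delivers convergence in probability of $Reopt/CIOPT$ to $1$, i.e.\ (\ref{eq:stochastic_optimality}).

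For (\ref{eq:expected_optimality}), I upgrade convergence in probability to convergence in expectation via a uniform bound. Lemmas~\ref{prop:UB1} and~\ref{prop:simple} jointly give $Reopt \leq 2\,CIOPT + u$, so $Reopt/CIOPT \leq 2 + u/CIOPT$. In any warehouse where every picking location lies at distance at least $d_{\min} > 0$ from the depot, $CIOPT > 0$ forces $CIOPT \geq d_{\min}$ in \textit{Robot} and $CIOPT \geq 2 d_{\min}$ in \textit{Pcart}/\textit{Pcart-N} (by the round-trip argument), so the ratio is uniformly bounded by a constant depending only on the warehouse geometry; the edge case $CIOPT = 0$ has probability at most $e^{-\lambda_i t} \to 0$ and is handled by the convention $0/0 = 1$. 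Bounded convergence then turns convergence in probability into (\ref{eq:expected_optimality}).

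The hard part is the random-sum step in the second paragraph: unlike Theorem~\ref{th:asymp_opt_makesp}, where $n$ is deterministic and the SLLN applies to $\{\mathcal{M}(O_j)\}_{j=1}^n$ directly, here the number of summands $N(t)$ is random and coupled to $\lambda_i$. This coupling blocks a direct use of the SLLN and necessitates the two-event decomposition that combines Poisson concentration of $N(t)$ with the LLN for the i.i.d.\ makespans; it is precisely this combination that the authors flag as requiring a new proof technique distinct from \cite{Jaillet2008}.
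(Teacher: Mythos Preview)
Your argument is correct and considerably more direct than the paper's, but it follows a genuinely different route. The paper proves (\ref{eq:expected_optimality}) first and derives (\ref{eq:stochastic_optimality}) from it via Markov's inequality; you go the other way. More substantively, the paper does \emph{not} exploit the deterministic bound $R_n\leq t$ at all. Instead, it invokes Theorem~\ref{th:asymp_opt_makesp} for uniformly-$[0,t]$ distributed arrivals, upgrades that a.s.\ convergence to convergence in mean using the bounded ratio from Proposition~\ref{prop:strict_CR}, and then uses the \emph{Order Statistics Property of the Poisson process} to identify $\mathbb{E}_{\lambda}[\,\cdot\mid N(t)=n]$ with the order-statistics expectation; the Law of Total Expectation and the fact that $\mathbb{P}_{\lambda_i}(N(t)\leq n_\epsilon)\to 0$ finish the job. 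So the ``new proof technique'' flagged by the authors is the measure-change via the Order Statistics Property, not the random-sum/LLN decomposition you describe in your last paragraph. Your approach buys simplicity and self-containment (no need for Theorem~\ref{th:asymp_opt_makesp} or the Poisson order-statistics identity); the paper's approach buys modularity, reusing Theorem~\ref{th:asymp_opt_makesp} as a black box.

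One small point: your uniform bound $Reopt/CIOPT\leq 2+u/CIOPT\leq 2+u/d_{\min}$ imposes the extra hypothesis $d_{\min}>0$, which the paper does not assume. You can avoid this entirely by citing Proposition~\ref{prop:strict_CR} directly (as the paper does), which gives $Reopt/CIOPT\leq \sigma(Reopt)\leq 4$ unconditionally; then bounded convergence goes through without any geometric assumption on the depot location.
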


\begin{proof}The detailed proof is provided in the Electronic Companion~\ref{sec:Proof_stoch_opt}. 

Recall that to establish convergence, Theorem~\ref{th:asymp_opt_makesp} depended on a sublinear growth of the arrival time $R_n$ of the $n^{th}$ order ($\frac{R_n}{n}\xrightarrow[a.s.]{n\rightarrow\infty} 0$). 
This is not the case when the arrival time sequence is a Poisson process 
with some given inter-arrival time $\frac{1}{\lambda}$.  Therefore, we must recur to a very different line of proof. In this proof, we perform a formal transition from the probability space described in Section~\ref{sec:orderstat}  to the probability space of this section. The proof proceeds in three steps.

First, we want to leverage the convergence statement of  Theorem~\ref{th:asymp_opt_makesp}. Therefore, we apply it in the case of instances $I(n)^{rand}$ with independent uniformly$[0,t]$-distributed arrival times for an arbitrary $t\in \mathbb{R}^+$. Since the ratio $\frac{Reopt(I(n)^{rand})}{CIOPT(I(n)^{rand})}$ is bounded (cf. Section~\ref{sec:abs_CR}), we can restate Theorem~\ref{th:asymp_opt_makesp} as the \textit{convergence in mean}.

Secondly, we assume that the order arrival times follow a Poisson process with some given arrival rate $\lambda\in \mathbb{R}^+$. We examine instances $I(t)^{rand}$  comprising all orders arriving within a predefined time interval $[0,t], t\in \mathbb{R}^+$. Let $N(t)$ be the random number of these orders. 
We then use the  Order Statistics Property of the homogeneous Poisson process, in combination with the statements of the first step of this proof, to derive the \textit{conditional} convergence in mean:
\begin{align}
&&&\lim_{n\rightarrow \infty }\mathbb{E}_{\lambda}[\frac{Reopt(I(t)^{rand})}{CIOPT(I(t)^{rand})} \ \vert \ N(t) =n]=1 \qquad 
\end{align} Convergence implies that for any $\epsilon>0$, we can find $n_{\epsilon}\in \mathbb{N}$ such that $\mathbb{E}_{\lambda}[\frac{Reopt(I(t)^{rand})}{CIOPT(I(t)^{rand})} \ \vert \ N(t) =n]\leq 1+\frac{\epsilon}{2}$ for all $n>n_{\epsilon}$. 

Thirdly, we compute the \textit{unconditional} expectation $\mathbb{E}_{\lambda}[\frac{Reopt(I(t)^{rand})}{CIOPT(I(t)^{rand})}]$ using the Law of Total Expectation for distinct groups of instances --- those with $N(t)> n_{\epsilon}$ and those with $N(t)\leq n_{\epsilon}$. After some transformations, we show that for any sequence of rates $\lambda_i$ with $\lambda_i \xrightarrow{i\rightarrow \infty}\infty$ and any $\epsilon>0$, one can choose $i_{\epsilon}\in \mathbb{N}$ in dependence of $t$ and $\epsilon$, such that $\mathbb{E}_{\lambda_{i}}[\frac{Reopt(I(t)^{rand})}{CIOPT(I(t)^{rand})}]\leq (1+\epsilon)$ for all $i\geq i_{\epsilon}$.

We use the Markov's inequality to derive the implication of (\ref{eq:stochastic_optimality}) from (\ref{eq:expected_optimality}). 
\end{proof}
\subsection{Performance of \textit{Reopt} in general large OOBSRP instances}\label{sec:asy_comp_ratio}
In this section, we abolish the stochastic assumptions and examine whether \textit{Reopt} retains its good performance for general large problem instances. We show that even in the \textit{worst-case}, the \textit{Reopt} result cannot be improved by more than 50\% in any given OOBSRP instance if it is sufficiently large. This is still an excellent performance.

Overall, having examined different worst-case examples for \textit{Reopt} in OOBSRP with a pushcart and a robot, we see two underlying mechanisms of the \textit{Reopt'}s suboptimal result: \textit{(i)} unfortunate batching and, in case of the interventionist \textit{Reopt}, \textit{(ii)} unnecessary walking. In \textit{(i)}, \textit{Reopt} places orders in one batch even if the savings from their batching are small compared to a separate picking of these orders (cf. Example~\ref{ex:example1}). $CIOPT$, to the contrary, can anticipate upcoming better matching orders. In \textit{(ii)}, \textit{Reopt} completely changes the current route of the picker, including her next destination, at the slightest promise of time-saving. Paradoxically, this behavior may result in a prolonged back-and-forth walking without picking an item, which is avoided in $CIOPT$ (cf. the proof of Proposition~\ref{prop:asymp_cr_LBs}).

In the following, Proposition~\ref{prop:makespanub_bin} establishes the upper bound of 2 for  $\sigma_{asymp.}(Reopt)$ in all the examined picking systems. Proposition~\ref{prop:asymp_cr_LBs} states that this worst-case ratio is \textit{tight} for \textit{Pcart} and \textit{Robot}.

\begin{proposition}\label{prop:makespanub_bin}
In \textit{Pcart-N}, \textit{Pcart}, and \textit{Robot}, $\sigma_{\text{asymp.}}(Reopt)\leq2$. In other words, for every instance $I(n)$
\begin{align}
Reopt(I(n))\leq 2 \cdot  CIOPT(I(n))+ const  
\end{align}
where $const$ is the warehouse traversal time $u$ from Lemma~\ref{prop:cover}.
\end{proposition}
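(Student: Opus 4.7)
The plan is to combine the upper bound on $Reopt(I(n))$ from Lemma~\ref{prop:UB1} with the two simple lower bounds on $CIOPT(I(n))$ from Lemma~\ref{prop:simple}; the result then follows by direct addition. The same argument will cover all three picking systems (\textit{Pcart-N}, \textit{Pcart}, \textit{Robot}) at once, since all three bounds hold uniformly across them.

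First, I would invoke Lemma~\ref{prop:UB1} to write
\begin{align*}
Reopt(I(n)) \leq r_n + CIOPT(I(n)^{r=0}) + u.
\end{align*}
Next, I would apply the two inequalities from Lemma~\ref{prop:simple}: on the one hand $CIOPT(I(n)) \geq r_n$, and on the other hand $CIOPT(I(n)) \geq CIOPT(I(n)^{r=0})$. Substituting these two lower bounds into the right-hand side above gives
\begin{align*}
Reopt(I(n)) \leq CIOPT(I(n)) + CIOPT(I(n)) + u = 2 \cdot CIOPT(I(n)) + u,
\end{align*}
which is exactly the claim with $const = u = (a+1)W + 2L$, a quantity depending only on the warehouse geometry (by Lemma~\ref{prop:cover}) and not on the number or composition of orders. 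By the definition of $\sigma_{\text{asymp.}}$ in~(\ref{eq:asy_def}), this yields $\sigma_{\text{asymp.}}(Reopt) \leq 2$.

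There is essentially no serious obstacle in this argument: all the non-trivial work has already been carried out in the proofs of the auxiliary Lemmas~\ref{prop:cover},~\ref{prop:UB1}, and~\ref{prop:simple}. The only subtlety worth emphasizing is that the additive term $u$ is genuinely required because \textit{Reopt} may need to complete a partially-processed batch and return to (or reach) the depot before it can begin executing the complete-information-optimal schedule for the remaining orders after $r_n$; this is precisely why Lemma~\ref{prop:UB1} carries the extra $u$-term, and why it propagates unchanged into the final $const$.
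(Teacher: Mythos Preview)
Your proof is correct and follows exactly the same approach as the paper: apply Lemma~\ref{prop:UB1} and then bound both $r_n$ and $CIOPT(I(n)^{r=0})$ by $CIOPT(I(n))$ via Lemma~\ref{prop:simple}. The paper's proof is simply a terser version of yours.
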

\begin{proof} 
By Lemma~\ref{prop:UB1} and Lemma~\ref{prop:simple}:
\begin{align}
    Reopt(I(n)) &\leq r_n + u +  CIOPT(I(n)^{r=0}) \nonumber \\
    & \leq 2 \cdot  CIOPT(I(n)) + u  \label{eq:makespanub_bin}
\end{align}

\end{proof}

\begin{proposition} \label{prop:asymp_cr_LBs}
In $Pcart$ and $Robot$, the upper bound of 2 is tight for the asymptotic competitive ratio, i.e.
\begin{align}
    \sigma_{asymp.}(Reopt)\geq 2
\end{align}  
Note that in the case of $Pcart$, the statement requires a warehouse with at least two aisles. 
\end{proposition}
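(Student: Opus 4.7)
The plan is to prove $\sigma_{asymp.}(Reopt)\geq 2$ for both \textit{Pcart} and \textit{Robot} by exhibiting, in each case, a warehouse geometry together with a family of instances $\{I(n)\}_{n\in\mathbb{N}}$ such that $Reopt(I(n))/CIOPT(I(n))\to 2$ as $n\to\infty$. Since the asymptotic competitive ratio in~\eqref{eq:asy_def} allows only a single fixed additive constant $const$, exhibiting ratios arbitrarily close to $2$ along one such family rules out any $\alpha<2$ and yields the lower bound.

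For \textit{Pcart}, I would use a warehouse with at least two aisles of length $L$, tuning the inter-aisle spacing $\omega$ (and, if helpful, the number of aisles together with the batch capacity $c$) so that a cross-aisle batch roundtrip has length close to twice that of a single-aisle batch roundtrip. The instance $I(n)$ is built from $n$ epochs, each contributing $c$ single-item orders, one per aisle at the far end. Within epoch $k$, release times are staggered by at least $L$ time units, so that by the time the $(j+1)$-th order of the epoch arrives, the interventionist \textit{Reopt} picker has already picked the $j$-th item of the current batch and committed its bin; thus \textit{Reopt} is locked into completing a cross-aisle snake tour per epoch, with cost close to twice the cost of a same-aisle roundtrip. $CIOPT$, knowing the entire arrival stream in advance, rebatches orders across epochs so that each of its batches lives in a single aisle. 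Choosing the inter-epoch spacing so that $CIOPT$'s waiting time at the depot is dominated by its walking time, the ratio of total completion times tends to the per-batch tour ratio, which the chosen geometry makes arbitrarily close to $2$. The construction crucially uses the cross-aisle term, which is why the statement requires at least two aisles -- in a single-aisle layout the same argument collapses to ratio~$1$.

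For \textit{Robot}, I would exploit the back-and-forth walking mechanism~\textit{(ii)} described at the start of this subsection, using a warehouse (for instance, a two-aisle layout or a long line-warehouse with interior depot) in which a cross-layout tour is roughly twice a single-side tour. The adversary releases orders at alternating extremes of the layout and at moments chosen so that the \textit{Reopt}-computed route, upon each arrival, makes the picker reverse or cross the full diameter $D$; the stream of such reroutings forces near-diameter walking of length $\approx 2D$ per batch. $CIOPT$, by contrast, commits from the start to batches grouping orders near a single extreme, performing only $\approx D$ of extra walking per batch at steady state. Summing over $n$ epochs and absorbing the first-batch discrepancy into the additive constant yields $Reopt(I(n))/CIOPT(I(n))\to 2$.

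The main obstacle in both constructions is the calibration of release times: the intra-epoch spacing must be long enough ($\geq L$ in \textit{Pcart}) to ensure that \textit{Reopt} has already picked the first item of the current batch before the next order arrives, locking in the bad batching; yet the inter-epoch spacing must be tuned so that $CIOPT$'s rebatched (or single-sweep) tours are not inflated by waiting for late arrivals. The anomalous cost of the ``first'' batch, whose tour length differs from the steady-state value, is exactly the finite deviation absorbed by the additive constant $const$ in~\eqref{eq:asy_def}, which is precisely why the proposition concerns the \emph{asymptotic} rather than the \emph{strict} competitive ratio.
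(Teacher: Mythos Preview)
Your \textit{Robot} sketch points at the same oscillation mechanism~(ii) that the paper uses, so the direction is fine there (the paper makes it precise in a one-aisle warehouse with two-item orders whose items sit at the two extremes of the aisle; release times are calibrated so that each new arrival flips \textit{Reopt}'s decision between ``finish $o_1$ alone'' and ``pair $o_1$ with the newcomer'', keeping the picker bouncing near $s^{\text{bottom}}$ without ever completing a batch until $r_n$).

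Your \textit{Pcart} plan, however, relies on mechanism~(i) (unfortunate batching), and that cannot yield an asymptotic ratio of~$2$. The difficulty is exactly the calibration you flag at the end, but it is not a detail to be tuned --- it is a contradiction. Write $T_R$ for the cost of the cross-aisle batch that you force on \textit{Reopt} and $T_C<T_R$ for the average cost of the same-aisle batches that $CIOPT$ uses. For the ratio $Reopt/CIOPT$ to approach $T_R/T_C$, the arrivals must be dense enough that $CIOPT$ does not idle, i.e.\ the inter-epoch time must be at most about $T_C$. But then \textit{Reopt}, which spends $T_R>T_C$ per epoch, falls behind and accumulates a queue containing several full epochs. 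With two or more epochs pending (so at least two orders per aisle), \textit{Reopt}'s own reoptimization prefers the same-aisle batching and its per-batch cost drops to $T_C$; the ratio collapses to~$1$ in the limit. Conversely, if you space epochs by at least $T_R$ so that \textit{Reopt} never accumulates a queue (and your locking-by-committed-bins argument applies in every epoch), then $r_n\approx nT_R$ and hence $CIOPT\geq r_n\approx nT_R=Reopt$, again giving ratio~$1$. There is no intermediate regime: bad batching that actually costs extra time is self-correcting under reoptimization.

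The paper's \textit{Pcart} construction avoids this trap by using mechanism~(ii) rather than~(i): with $c=2$ and two picking locations $s^{\text{left}},s^{\text{right}}$ at the far ends of a two-aisle layout, the adversary releases pairs of orders at moments chosen so that, each time, \textit{Reopt} strictly prefers to reverse direction just before completing the current pick. The picker therefore oscillates in the upper half of the warehouse for essentially the entire interval $[0,r_n]$ while having picked only $o_1$; none of this walking is productive, so no queue-based self-correction kicks in. After $r_n$, \textit{Reopt} still needs roughly $CIOPT(I)$ to collect everything, yielding $Reopt(I)\approx r_n+CIOPT(I)\approx 2\,CIOPT(I)$. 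This ``wasted walking'' is the missing idea in your \textit{Pcart} argument; the two-aisle requirement is what makes the reversal strictly preferable at each step.
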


\begin{figure}[htbp]
    \centering
    \includegraphics[scale=0.7]{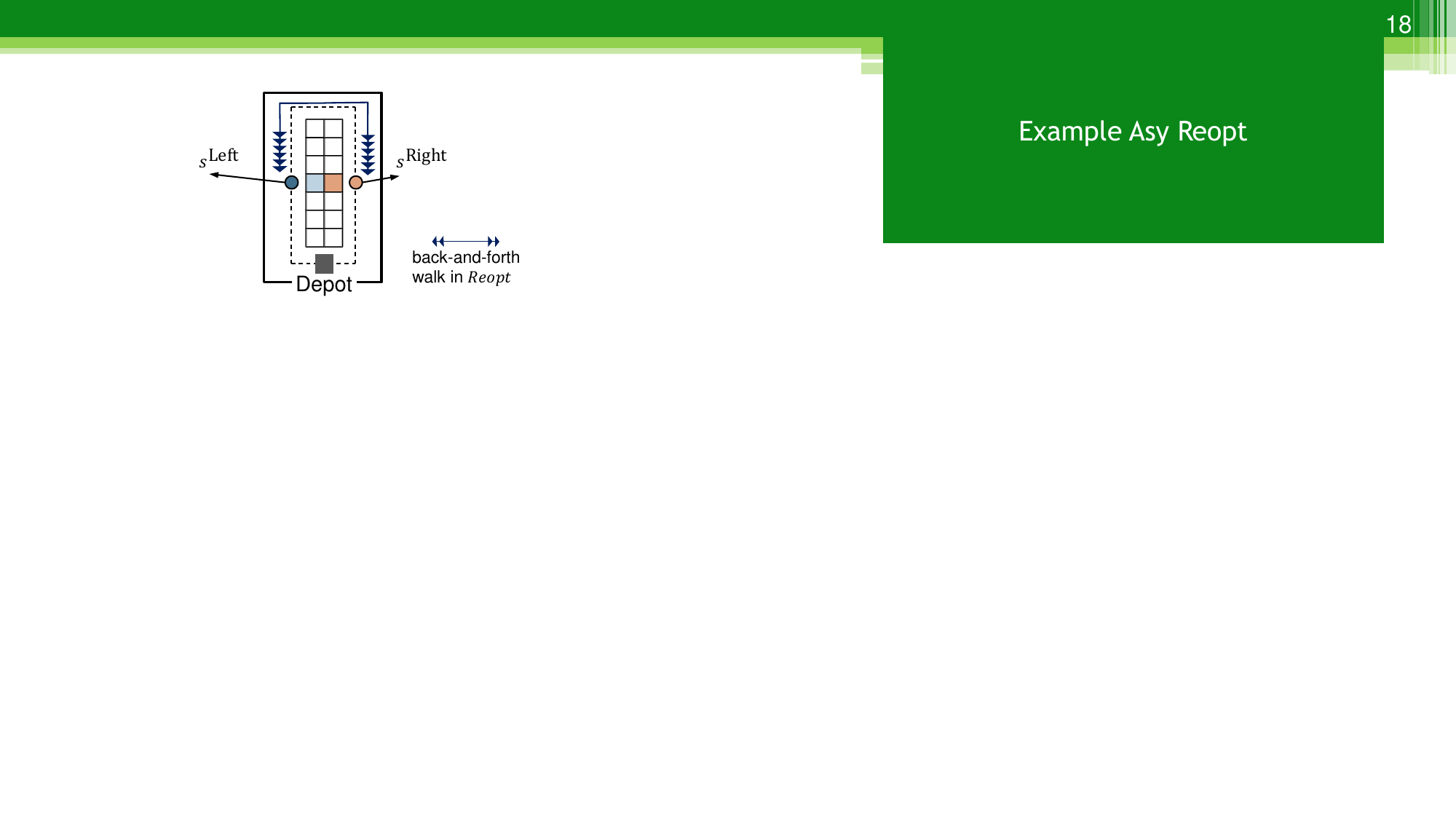}
    \caption{\textit{Pcart}: Unfortunate large instance for \textit{Reopt} \label{fig:Asy_ex} \\ \scriptsize{ Note. $c=2$. Single-item orders with items $s^\text{left}$ and $s^\text{right}$ arrive such that, in \textit{Reopt}, the picker keeps walking back and forth without completing any batch until the arrival of the last order.}}
\end{figure}

\begin{proof}
We examine the case of \textit{Pcart} and refer to Electronic Companion~\ref{sec:proof_asyp_cr_LBs} for the case of \textit{Robot}.

Figure~\ref{fig:Asy_ex} provides an unfortunate example $I^\text{worst}$ for $c=2$ and a two-aisle warehouse. This example can be generalized for $c>2$ and more general warehouse layouts. 

Arriving orders in $I^\text{worst}$ consist of one item, which is placed either in the left aisle (at position $s^\text{left}$) or in the right aisle (at position $s^\text{right}$). We simplify the notation for these single-order items and write $o_i=s^\text{left}$ or  $o_i=s^\text{right}$, $\forall i \in \mathbb{N}$, respectively.  Recall that $W$ and $L$ are the width and the length of the warehouse, respectively. In this example, the picker can traverse both aisles in a cyclic tour of length $T=2W+2L$ from the depot. Let  $d(l_d,s^\text{left})=(l_d,s^\text{right})=\frac{1}{2}\cdot d(s^\text{left}, s^\text{right})=\frac{1}{2}T$. 

Instance $I^\text{worst}$ consists of $n=2k$ orders, where $k\in \mathbb{N}_{\geq 4}$ is some \textit{uneven} number. Each time, orders $s^\text{left}$ and  $s^\text{right}$ arrive simultaneously. The idea is to set the inter-arrival times of the orders such that \textit{Reopt} makes the picker oscillate between the positions of $s^\text{left}$ and $s^\text{right}$ in the upper half of the warehouse,  without picking any item, until the last order arrives.

At time $r_1=r_2=0$, \textit{Reopt} batches the only two available orders together: $S_1=\{o_1,o_2\}$ with $o_1=s^\text{left}$ and $o_2=s^\text{right}$, and plans to pick them in a circular tour. \textit{W.l.o.g.}, let \textit{Reopt} pick $o_1$ first (at time $\frac{1}{4}T$). 

At time $r_3=r_4=\frac{3}{4}T-\frac{\delta}{2k-2}$, an order $s^\text{left}$ and an order $s^\text{right}$ arrive. In \textit{Reopt}, the picker is currently at distance  $\frac{\delta}{2k-2}$ above item $s^\text{right}$ with $o_1$ placed in her cart. \textit{Reopt} has two choices. The first is to follow the initial plan and pick batch $\{o_1,o_2\}=\{s^\text{left},s^\text{right}\}$ to the end, then pick batch $\{o_3,o_4\}=\{s^\text{left},s^\text{right}\}$, at a total cost of $2T$. The second is to turn back and complete a modified batch $S_1=\{o_1,o_3\}=\{s^\text{left},s^\text{left}\}$, then pick $S_2=\{o_2,o_4\}=\{s^\text{right},s^\text{right}\}$, at a total cost of $r_4+ \frac{3}{4}T- \frac{\delta}{2k-2}+\frac{T}{2}=2T-2 \cdot \frac{\delta}{2k-2}$. \textit{Reopt} decides for the second option.

At time $r_5=r_6=\frac{5}{4}T-3 \cdot \frac{\delta}{2k-2}$, an order  $s^\text{left}$ and an order  $s^\text{right}$ arrive, and the picker is at a distance of $\frac{\delta}{2k-2}$ above $s^\text{left}$ with $o_1$ placed in her cart. \textit{Reopt} has two choices. The first is to continue picking $S_1=\{o_1,o_3\}=\{s^\text{left},s^\text{left}\}$ and then $S_2=\{o_2,o_4\}=\{s^\text{right},s^\text{right}\}$ and $S_3=\{o_5,o_6\}=\{s^\text{left},s^\text{right}\}$, at a total cost of $2T-2 \cdot \frac{\delta}{2k-2}+T=3T-2\cdot \frac{\delta}{2k-2}$. The second is to change the first batch again to $S_1=\{o_1,o_2\}=\{s^\text{left},s^\text{right}\}$, then pick $S_2=\{o_3,o_5\}=\{s^\text{left},s^\text{left}\}$ and $S_3=\{o_4,o_6\}=\{s^\text{right},s^\text{right}\}$, at a total cost of $r_6+\frac{3}{4}T-\frac{\delta}{2k-2}+2\cdot \frac{T}{2}=3T-4 \cdot \frac{\delta}{2k-2}$. \textit{Reopt} decides for the second option.

In general, at time $r_{2p-1}=r_{2p}=\frac{2p-1}{4}T-(2p-3) \cdot \frac{\delta}{2k-2}$ for $p\in \{2,..,k\}$, an order $s^\text{left}$ and an order  $s^\text{right}$ arrive. If $p$ is uneven, the picker is at distance $\frac{\delta}{2k-2}$ above $s^\text{left}$ planning to pick $\frac{p-1}{2}$ batches of type $\{s^\text{left},s^\text{left}\}$ first, then $\frac{p-1}{2}$ batches of type $\{s^\text{right},s^\text{right}\}$. With the two recently arrived orders, \textit{Reopt} decides to pick batch $S_1=\{o_1,o_2\}=\{s^\text{left},s^\text{right}\}$ instead, then $\frac{p-1}{2}$ batches of type $\{s^\text{left},s^\text{left}\}$ and $\frac{p-1}{2}$ batches of type $\{s^\text{right},s^\text{right}\}$. So, the picker turns around towards $s^\text{right}$. Similarly, if $p$ is even, the picker is at distance $\frac{\delta}{2k-2}$ above $s^\text{right}$ planning to pick  $S_1=\{o_1,o_2\}=\{s^\text{left},s^\text{right}\}$ first, then $\frac{p}{2}-1$ batches of type $\{s^\text{left},s^\text{left}\}$, and then $\frac{p}{2}-1$ batches of type $\{s^\text{right},s^\text{right}\}$. With the two recently arrived orders, \textit{Reopt} decides to pick  $\frac{p}{2}$ batches of type $\{s^\text{left},s^\text{left}\}$ first, then $\frac{p}{2}$ batches of type  $\{s^\text{right},s^\text{right}\}$. So, the picker turns around towards $s^\text{left}$.  

If $k$ is uneven, $Reopt(I^\text{worst})=kT-\delta$, which equals the release time of the last order $r_{2k}=\frac{2k-1}{4}T-(2k-3) \cdot \frac{\delta}{2k-2}$ plus the time $(\frac{3}{4}T-\frac{\delta}{2k-2})$ to reach $s^\text{right}$ and complete the first batch plus $(\frac{k-1}{2}\cdot 2 \cdot \frac{1}{2}T)$ to pick $\frac{k-1}{2}$ batches of type $\{s^\text{left},s^\text{left}\}$ and $\frac{k-1}{2}$ batches of type $\{s^\text{right},s^\text{right}\}$. In contrast, $CIOPT$ picks $S_1=\{o_1,o_2\}$, then $\frac{k-1}{2}$ batches of type $\{s^\text{left},s^\text{left}\}$ and $\frac{k-1}{2}$ batches of type $\{s^\text{right},s^\text{right}\}$ such that $CIOPT(I(k))=T+(k-1)\frac{T}{2}=\frac{k}{2}T+\frac{T}{2}$. This implies that 
\begin{align}
    \lim_{k \rightarrow \infty} \quad \frac{Reopt(I(k))}{CIOPT(I(k))}=2 \label{eq:limit_asymp_cr}
\end{align}
By Lemma~\ref{lem:lim_asymp_cr} in Electronic Companion~\ref{sec:aux_lemma_asymp_cr},  $\sigma_{asymp}(Reopt)\geq 2$ follows immediately for \textit{Pcart}. 

For $c>2$, we can place multiple single-item orders at positions $s^\text{left}$ and $s^\text{right}$. Note that in the example above, \textit{Reopt} faces ties. Initially, there are two alternative plans with the same cost -- picking the orders together as $S_1=\{o_1, o_2\}$ in a cyclic tour or picking them separately as $S_1=\{o_1\}$ and $S_2=\{o_2\}$. In the second plan, the example does not work. We can avoid ties by considering $o_1=\{s^\text{left},s'\}$, where $s'$ is placed just above $s^\text{left}$, which makes \textit{Reopt} to prefer picking both orders together.  
\end{proof}

\subsection{Performance of \textit{Reopt} in  instances of all sizes} \label{sec:abs_CR}

In \textit{any} OOBSRP instance, \textit{Reopt} never exceeds more than 4 times the complete-information optimum in \textit{Pcart} and \textit{Robot} and no more than 2.5 times in \textit{Pcart-N}, respectively (see Proposition~\ref{prop:strict_CR}). However, strict competitive ratios of \textit{any} deterministic online algorithm are at least 2, 1.64, and 2 in \textit{Pcart-N}, \textit{Pcart}, and \textit{Robot}, respectively, by Proposition~\ref{prop:LB_abs_CR_general}. In other words, the performance of \textit{Reopt} cannot be improved much across instances of all sizes.

Observe that the upper bound 2.5 on the strict competitive ratio for \textit{Reopt} in \textit{Pcart-N} is almost tight, as we can construct instances $I$ with $\frac{Reopt(I)}{CIOPT(I)} =2.5-\tilde{\epsilon}$ for an arbitrary small $\tilde{\epsilon}>0$ (see Example~\ref{ex:example1}).

\begin{proposition}\label{prop:strict_CR}
The following upper bounds hold for the strict competitive ratio:
\begin{align}
\text{In \textit{Pcart-N}:}  \qquad    \sigma(Reopt) & \leq 2.5 \label{eq:str_cr_PcartN} \\
\text{In \textit{Pcart} and \textit{Robot}:} \qquad     \sigma(Reopt) & \leq 4 \label{eq:str_cr_Pcart}
\end{align}

\end{proposition}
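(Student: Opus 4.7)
The starting point is to combine Lemma~\ref{prop:UB1}, $Reopt(I(n)) \leq r_n + CIOPT(I(n)^{r=0}) + u$, with Lemma~\ref{prop:simple}'s lower bounds $r_n \leq CIOPT(I(n))$ and $CIOPT(I(n)^{r=0}) \leq CIOPT(I(n))$. Substituting yields $Reopt(I(n)) \leq 2 \cdot CIOPT(I(n)) + u$, so the remaining work is to absorb the additive warehouse-traversal constant $u$ into the multiplicative factor in each of the three systems.

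For the bound $\sigma(Reopt) \leq 4$ in \textit{Pcart} and \textit{Robot}, my plan is to refine the crude $+u$ term in Lemma~\ref{prop:UB1} by the actual completion time of Reopt's current batch $B_0$ at time $r_n$, which is at most the batch tour length $t(B_0)$. The key structural step is to show $t(B_0) \leq 2 \cdot CIOPT(I(n))$: the at-most-$c$ orders of $B_0$ must all be picked in CIOPT's schedule, and triangle-inequality arguments in the rectilinear grid combined with the aggregate-makespan lower bound of Lemma~\ref{prop:LB1} (using the per-order makespans $\mathcal{M}(o_j)$) supply the required comparison. Putting the pieces together gives $Reopt(I(n)) \leq r_n + t(B_0) + CIOPT(I^{r=0}) \leq CIOPT + 2\cdot CIOPT + CIOPT = 4 \cdot CIOPT(I(n))$.

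For the sharper bound $\sigma(Reopt) \leq 2.5$ in \textit{Pcart-N}, I would split on the picker's position at time $r_n$. If the picker is at the depot, reoptimization is immediate and $Reopt(I(n)) \leq r_n + CIOPT(I^{r=0}) \leq 2 \cdot CIOPT(I(n))$ already fits. Otherwise the picker is mid-tour on a committed batch $B_0$ that does not contain $o_n$ (since $o_n$ arrived after $B_0$'s start time $t_0$), and we have $Reopt \leq r_n + t(B_0) + CIOPT(I^{r=0})$. The extra sharpening to factor $2.5$ exploits the noninterventionist regime: because CIOPT must also pick $o_n$ after $r_n$ on top of the deep-warehouse items of $B_0$, its schedule incurs a tour-length contribution comparable to $t(B_0)/2$ beyond $r_n$, which I would turn into the joint bound $r_n + t(B_0) \leq \tfrac{3}{2}\cdot CIOPT(I(n))$ and hence $Reopt \leq 2.5 \cdot CIOPT$.

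The main obstacle in both parts is the structural estimate relating Reopt's current-batch tour $t(B_0)$ to $CIOPT(I(n))$. Since Reopt picks $B_0$ myopically from only the orders available at $t_0$, $B_0$ need not appear in CIOPT's schedule, so a direct per-batch comparison fails. The argument must therefore proceed indirectly via aggregate lower bounds (Lemma~\ref{prop:LB1}) and via case analysis on whether $B_0$'s items are clustered or spread across the grid, and must additionally handle small instances where the additive $u$ would otherwise dominate $CIOPT$. Achieving the sharper constants required to make the 2.5 ratio for Pcart-N near-tight (as illustrated by Example~\ref{ex:example1}) will be the most technically delicate step.
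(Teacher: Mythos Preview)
Your plan has genuine gaps in both parts.

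For \textit{Pcart} and \textit{Robot}, the decomposition $Reopt \leq r_n + t(B_0) + CIOPT(I^{r=0})$ hinges on the claim that the time to finish the current batch after $r_n$ is at most the batch tour length $t(B_0)$. In the interventionist regime this is \emph{not} guaranteed: earlier reoptimizations may already have sent the picker zigzagging far from any item of $B_0$ (this is exactly the mechanism in the worst-case construction of Proposition~\ref{prop:asymp_cr_LBs}), so the residual walk can exceed $t(B_0)$. The paper sidesteps this entirely: since the picker started at the depot and moves at unit speed, at time $r_n$ she is within distance $r_n$ of the depot; hence she can return in time $\leq r_n$ and then finish the open batch by tracing the $CIOPT(I^{r=0})$ route (skipping items not in that batch), at cost $\leq CIOPT(I^{r=0})$. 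Together with another $CIOPT(I^{r=0})$ for the remaining orders this gives $Reopt \leq 2r_n + 2\,CIOPT(I^{r=0}) \leq 4\,CIOPT$. No appeal to Lemma~\ref{prop:LB1} or per-order makespans is needed.

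For \textit{Pcart-N}, your decomposition $Reopt \leq r_n + t(B_0) + CIOPT(I^{r=0})$ is valid (the batch is frozen once started), but the sharpening $r_n + t(B_0) \leq \tfrac{3}{2}\,CIOPT$ is simply false. Take $c=2$, let $o_1,o_2$ arrive at time $0$ at a single location at distance $D$ from the depot, and let $o_3$ arrive at $r_3 = 2D-\delta$ at distance $\epsilon'$ from the depot. Then $t(B_0)=2D$, so $r_n + t(B_0) \approx 4D$, while $CIOPT \approx 2D$, a ratio of $2$, not $\tfrac{3}{2}$. The paper does \emph{not} pivot on $r_n$; it pivots on the \emph{start time} $t(\tilde S)$ of the current batch and the index $\tilde j$ of the first order that arrives afterward. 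Writing $\tilde J$ for the queue at $t(\tilde S)$ and $I_{\geq \tilde j}$ for the later orders (with release times set to zero), one has $Reopt \leq t(\tilde S) + CIOPT(\tilde J) + CIOPT(I_{\geq \tilde j})$. The key inequality is $r_{\tilde j} + CIOPT(I_{\geq \tilde j}) \leq CIOPT(I) + d_{\max}$, which holds because $CIOPT$ can pre-position the picker toward the first item of $I_{\geq \tilde j}$ before it is released; combined with $t(\tilde S) \leq r_{\tilde j}$, $CIOPT(\tilde J) \leq CIOPT(I)$, and $d_{\max} \leq \tfrac{1}{2}\,CIOPT(I)$ this yields the $2.5$ bound. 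Your heuristic that ``CIOPT must incur $\approx t(B_0)/2$ beyond $r_n$'' fails precisely because CIOPT may have already collected all of $B_0$'s items long before $r_n$.
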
 

\begin{proof}[Proof of (\ref{eq:str_cr_PcartN})]
We are in system \textit{Pcart-N} and  consider two cases.
\end{proof}

In the first case, the picker in \textit{Reopt} is idle at the depot at time $r_n$. Then, by Lemma~\ref{prop:simple}:
\begin{align}
Reopt(I)\leq r_n+ CIOPT(I^{r=0}) \leq 2 \cdot  CIOPT(I) \label{eq:UB_abs_CR_case1}
\end{align}

In the other case, at $r_n$, the picker in \textit{Reopt} is collecting batch $\tilde{S}$, for which she left the depot at time $t(\tilde{S})$.  Let $\tilde{j}$ be the index of the first order that arrives \textit{after} time $t(\tilde{S})$. Let $I_{\geq \tilde{j}}$ denote an instance which contains the subset of orders $\{o_i \vert \ s.t. \ i  \geq \tilde{j}\}$ and assumes these orders are \textit{instantly available}. We denote by $\tilde{J}$ an instance that contains all orders in the queue of \textit{Reopt} at time $t(\tilde{S})$, and that assumes all these orders are \textit{instantly available}. By definition, at time $t(\tilde{S})$, \textit{Reopt}'s plan to collect the orders of $\tilde{J}$ is \textit{optimal} given the current information. Therefore the solution of  \textit{Reopt} is not worse than the following feasible policy: At $t(\tilde{S})$ complete first $\tilde{J}$ in time $CIOPT(\tilde{J})$. After that, the picker is at the depot and collects the remaining orders in at most $CIOPT(I_{\geq \tilde{j}})$ time. We denote $d_{max}=\max\{d(l_d,s_j^i) \ \vert \ s_j^i \in o_{\tilde{j}} \cup o_{\tilde{j}+1} \cup ... \cup o_n \}$, the maximal distance from the depot to an item in $I_{\geq \tilde{j}}$. Altogether:
\begin{align}
Reopt(I)  & \leq t(\tilde{S})+ CIOPT(\tilde{J})+ CIOPT(I_{\geq \tilde{j}})\label{eq:UB_nonint_first}\\
&  \leq  r_{\tilde{j}} +  CIOPT(I_{\geq \tilde{j}})+  CIOPT(I) \qquad \label{eq:UB_nonint_2}\\
& \leq CIOPT(I) + d_{\max} +  CIOPT(I)  \label{eq:UB_nonint}\\
& \leq 2.5 \cdot CIOPT(I)  \label{eq:UB_nonint_final}
\end{align}
Line (\ref{eq:UB_nonint_2}) follows by Lemma~\ref{prop:simple}. In $CIOPT$, the picker can move towards the first picking location prior to the arrival of the corresponding order, thus (\ref{eq:UB_nonint}) ensues from (\ref{eq:UB_nonint_2}): $CIOPT(I)+d_{\max}\geq r_{\tilde{j}}+CIOPT(I_{\geq \tilde{j}})$.  Finally, since $CIOPT(I)\geq 2\cdot d_{\max}$, (\ref{eq:UB_nonint_final}) follows.

The logic above does not hold for \textit{Pcart-Int} and \textit{Robot}, because the initiated batch $\tilde{S}$ can be changed at each order arrival after $t(\tilde{S})$, possibly leading to a worse solution.\\

\begin{proof}[Proof of (\ref{eq:str_cr_Pcart})]
Consider \textit{Pcart} and recall Lemma~\ref{prop:UB1}: At time $r_n$, when the last order arrives, \textit{Reopt} reoptimizes having the complete information and requires not more time than the following simple policy: 1) Complete the currently open batch and move to the depot, then 2) use $CIOPT(I(n)^{r=0})$ to collect the remaining orders. For 1), 
the picker requires at most $r_n+CIOPT(I(n)^{r=0})$, because she can walk from the current position all the way back to the depot in $r_n$ and then follow $CIOPT$ for picking the remaining items of the currently open batch (and skipping the rest). Thus, for 1) together with 2), by Lemma~\ref{prop:simple}:
\begin{align}
Reopt(I(n))\leq & \ 2 \cdot r_n+ 2\cdot CIOPT(I(n)^{r=0}) \nonumber \\
\leq & \ 4\cdot CIOPT(I(n))  \label{eq:UB_reopt}
\end{align}
The proof for \textit{Robot} proceeds along the same lines. Observe, however, that at the start of step 2) the picker may be away from the depot. However, she can follow $CIOPT$ in the  \textit{reverse} direction to collect the remaining items. 
\end{proof}

\begin{figure}[htbp]
    \centering
    \includegraphics[scale=0.4]{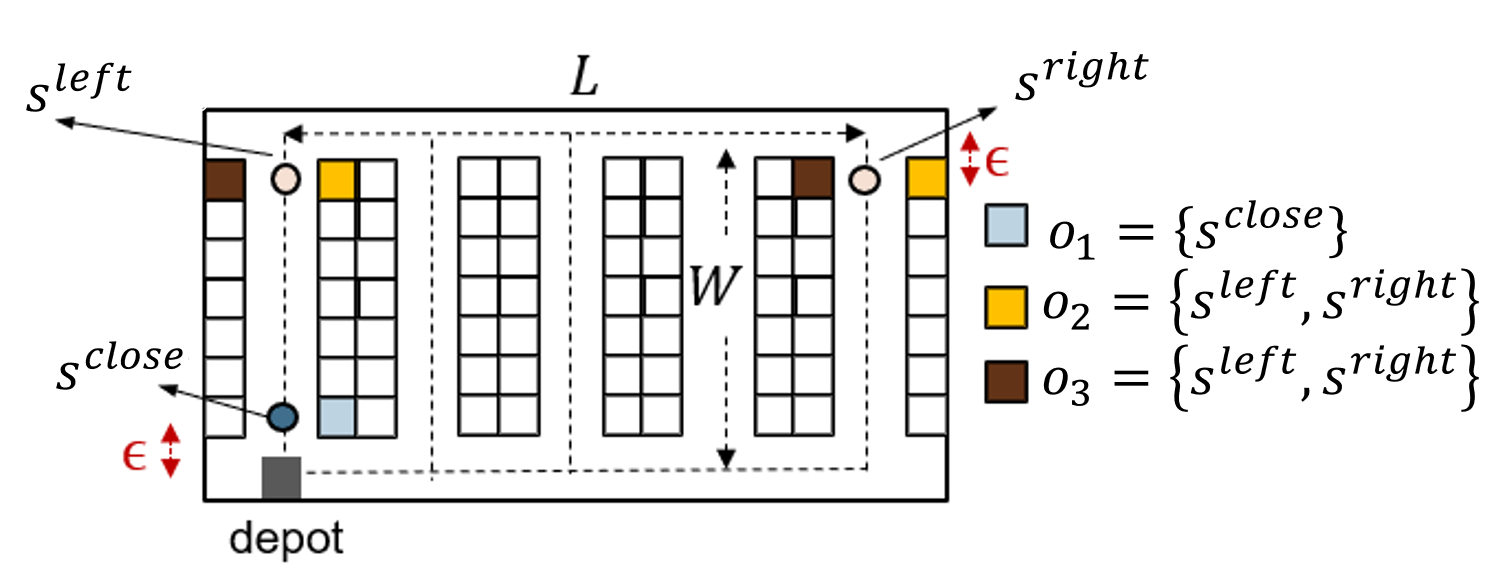}
    \caption{\textit{Pcart-N}: Unfortunate small instance $I^{unf}$ for \textit{Reopt}. \label{fig:LB_abs_CR_Reopt} \\ \scriptsize{ Note. $c=2, r_1=r_2=L+W, \text{and } r_3=L+W+\delta$ with $\delta\rightarrow 0$. \textit{Reopt} myopically collects poorly matching orders $o_1$ and $o_2$ together.}}
\end{figure}

Figure~\ref{fig:LB_abs_CR_Reopt} provides an unfortunate instance $I^{unf}$ for  \textit{Pcart-N} with $c=2$, such that $\frac{Reopt(I^{unf})}{CIOPT(I^{unf})}\approx 2.5$. 

\begin{example}\label{ex:example1}
In instance $I^{unf}$ of Figure~\ref{fig:LB_abs_CR_Reopt}, \textit{Reopt} receives two orders at time $r_1=r_2=W+L$: $o_1$ with only one item $s^{close}$ at the closest picking location to the depot, $d(l_s,s^{close})=\epsilon$, and $o_2$ with two far away picking locations that can be visited best in a cyclic tour of length $2(W+L)$ from the depot. \textit{Reopt} forms batch $s_1=\{o_1,o_2\}$, and as soon as the picker leaves the depot (at time $r_3=W+L+\delta$, $\delta\rightarrow 0$), a third order $o_3=o_2$ arrives. Although $o_2$ and $o_3$ are perfectly matching, \textit{Reopt} in $Pcart-N$ cannot modify the commenced batch $S_1$, and completes the operation with $S_2=\{o_3\}$ at time $Reopt(I^{unf})=r_1+2(W+L)+2(W+L)=5(W+L)$. On the other hand, $CIOPT$ picks the first item of $o_2$ and $o_3$ (in $s^{right}$) at time $W+L+\delta$ and completes its first batch $\{o_2,o_3\}$ at time $2(L+W)+\epsilon+\delta$. After picking $o_1$ separately, $CIOPT(I^{unf})=2(L+W)+3\epsilon+\delta$. Since $\delta\rightarrow 0$ and $\epsilon$ is the distance from the depot to the closest picking location, we can construct instances with arbitrarily small $(3\epsilon+\delta)>0$.  
\end{example}

We now denote by $ALG$ an arbitrary deterministic online algorithm for OOBSRP in each of the studied systems (\textit{Pcart-N}, \textit{Pcart}, and \textit{Robot}). An algorithm is \textit{deterministic}, if
for every fixed instance, it always generates the same result when run repeatedly.
We differentiate deterministic algorithms from probabilistic algorithms, which may choose their actions randomly. 
Note that $ALG$ in \textit{Pcart-N} must be an arbitrary \textit{non-interventionist} algorithm, that can update the current picking plan only when the picker is at the depot.
\begin{proposition}\label{prop:LB_abs_CR_general}
For every deterministic online algorithm $ALG$ for $OOBSRP$, the strict competitive ratio cannot be better as the following:

\begin{align}
\text{In \textit{Pcart-N} and \textit{Robot}:}  \qquad    \sigma(ALG) & \geq 2 \label{eq:general_LB_nonint} \\
\text{In \textit{Pcart} :} \qquad     \sigma(ALG) & \geq 1.64 \label{eq:general_LB_Pcart_int}
\end{align}
Note that we require $c\geq 2$ for \textit{Pcart-N} and \textit{Pcart}.
\end{proposition}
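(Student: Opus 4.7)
For each of the three picking systems, I would construct an adversarial sequence of orders that forces any deterministic online algorithm $ALG$ into a ratio to $CIOPT$ of at least the claimed value. Because $ALG$ is deterministic, the adversary can track the picker's exact trajectory and tailor each subsequent release to penalize the current action; the arguments are therefore sequential and case-based on what $ALG$ does first.

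\textbf{Pcart-N ($\sigma(ALG)\geq 2$).} I would exploit the commitment created by leaving the depot with a batch. Consider a single aisle of length $L$ with depot at one end and $c=2$. Release a single-item order $o_1$ at the far end at time $0$. Let $t$ be the time at which the picker finally departs the depot carrying $o_1$. At time $t+\epsilon$, release a second single-item order $o_2$ at the same far location. Because the policy is non-interventionist, $o_2$ cannot be appended to the already-dispatched batch, so $ALG(I)\geq t+4L$, while $CIOPT$, knowing both release times in advance, batches the two orders into a single round trip of length $\max\{2L,\,t+\epsilon+L\}$. Taking $t=0$ yields $ALG(I)/CIOPT(I)\to 2$; the case $t>0$ produces an even higher ratio on the same instance, and the case of $ALG$ stalling at the depot indefinitely makes $ALG$ infinite while $CIOPT$ stays bounded.

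\textbf{Robot ($\sigma(ALG)\geq 2$).} Since the robot need not return to the depot and the policy may be interventionist, a one-trip adversary no longer works; instead I would exploit order indivisibility. Once the picker places the first item of some order $o_j$ into a bin, all remaining items of $o_j$ must still be collected in the same batch. Taking $c=1$, I would release a multi-item order $o_1$ whose items are positioned so that the picker's first pick commits her to a direction of traversal; immediately after this commitment the adversary releases a further order on the opposite side of the depot. $ALG$ must finish $o_1$ before servicing the new order and is forced into a long backtrack, whereas $CIOPT$, knowing the second release in advance, processes it first (or interleaves at shared locations) and avoids the backtrack. A careful choice of the geometry drives the ratio to $2$ as the arm length of the warehouse grows.

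\textbf{Pcart ($\sigma(ALG)\geq (9+\sqrt{17})/8$).} In the interventionist homing case, $ALG$ retains full rerouting freedom and the only irreversible action per batch is the eventual return to the depot, so the adversary has considerably less leverage. The value $1.64\approx(9+\sqrt{17})/8$ is precisely the Ausiello--Feuerstein--Leonardi--Stougie--Talamo lower bound for online TSP on the line. I would translate their construction into the warehouse setting by aligning single-item orders along one aisle (using $c=2$ to permit batching) and releasing them in the two-stage pattern of their proof: a first release forces $ALG$ to commit to a direction, and a second release on the opposite side of the depot, timed optimally, forces a homing trip that $CIOPT$ avoids by presorting its visits. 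Optimizing the release times against $CIOPT$'s best offline schedule reproduces the constant $(9+\sqrt{17})/8$.

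\textbf{Main obstacle.} The Pcart case is the principal difficulty. Pcart-N admits a hard commitment once a batch departs, and Robot admits a hard commitment at the first pick of a multi-item order; either commitment gives the adversary enough leverage to reach the ratio $2$. In Pcart, by contrast, the picker can rebatch and reroute at every arrival, and the homing requirement does not bind until the picker herself chooses to end a batch. Designing an adversary whose releases cannot be mitigated by such rerouting, and then tuning the release parameters to extract the tightest possible ratio, is what leads to the strictly smaller constant $(9+\sqrt{17})/8$ rather than $2$.
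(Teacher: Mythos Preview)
Your \textit{Pcart-N} argument has a genuine gap: the claim ``the case $t>0$ produces an even higher ratio on the same instance'' is false for large $t$. If $ALG$ waits $t>2L$ before dispatching and you still release $o_2$ at time $t+\epsilon$, then $CIOPT(I)=t+\epsilon+L$ while $ALG(I)=t+4L$, and the ratio $(t+4L)/(t+L+\epsilon)$ drops below $2$ and tends to $1$. The adversary's response must \emph{depend on $t$}: for large $t$ you must release \emph{no} second order, so that on the one-order instance $CIOPT=2L$ while $ALG\geq t+2L\geq 4L$. The paper does exactly this case distinction on $t$.

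Your \textit{Robot} argument rests on a misconception. You write that ``a one-trip adversary no longer works'' because the policy is interventionist and need not return to the depot, but in fact a single one-item order suffices. Place two candidate locations $s^1,s^2$ symmetrically about the depot at distance $\epsilon$; at time $r_1=\epsilon$ release the order at whichever of the two is farther from the picker's current position. $CIOPT$, knowing the future, walks straight there and finishes at time $\epsilon$, while $ALG$ needs at least $\epsilon+\epsilon=2\epsilon$. No order indivisibility, no second order, no backtracking argument is needed; this is precisely the N-TSP lower bound of Ausiello et al., transplanted into the warehouse. Your proposed multi-item construction with $c=1$ is both more complicated and, as stated, does not reach $2$: with $o_1=\{-L,+L\}$ and $o_2=\{-L\}$ released after the first pick, one gets $ALG=5L$ versus $CIOPT=3L$, a ratio of $5/3$. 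For \textit{Pcart} your identification of $(9+\sqrt{17})/8$ and the Ausiello et al.\ H-TSP construction is correct in spirit, but the warehouse adaptation is considerably more involved than a ``two-stage pattern'' with single-item orders; the paper needs seven adversarial instances (including two-item orders to force the picker across a separating line) and a full case analysis on the picker's position at several checkpoints.
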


\begin{figure}[htbp]
    \centering
    \includegraphics[scale=0.7]{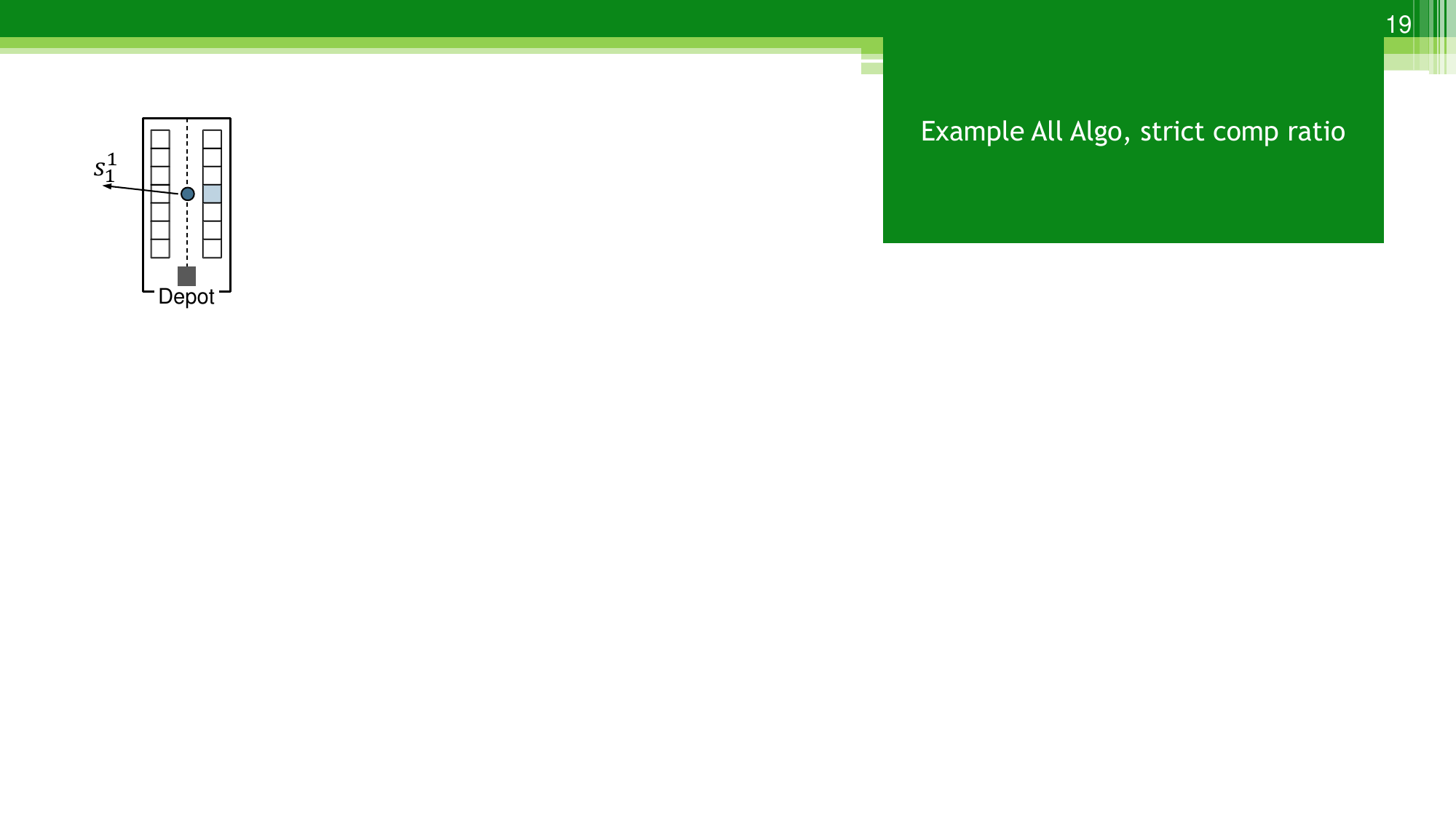}
    \caption{\textit{Pcart-N}: Unfortunate small instance for all algorithms. \label{fig:Worst_all} \\ \scriptsize{ Note. No action suits all possible future well: a) no further orders arrive, b) further orders arrive.}}
\end{figure}

\begin{proof}
The lower bounds for \textit{Pcart} and \textit{Robot} are based on Theorem 3.1  and Theorem 3.3 of \citet{Ausiello1999}, respectively. In both cases, \citet{Ausiello1999} construct unfortunate instances for the online TSP \textit{on the real line}. These instances can be transformed to the warehouse environment. The proof requires nontrivial extensions, which are given in the Electronic Companion~\ref{sec:proof_LB_abs_general}. Indeed, in the warehouse, the movements of the picker are not limited to a real line and, moreover, the batching capacity should be respected. 

To prove the lower bound for any $ALG$ in \textit{Pcart-N}, consider the instance in Figure~\ref{fig:Worst_all}. Recall that the warehouse width is $W$ and the picker moves at the unit speed. Initially, at $r_1=0$, order $o_1=\{s_1^1\}$ with $d(l_d,s_1^1)=\frac{1}{2}W$ is available. Let $ALG$ decide to dispatch the picker to collect the single-order batch $\{o_1\}$ at time $t$. In the following, we will construct an unfortunate instance for each possible value of $t$. 

If $t\geq W$, no more orders arrive.  $ALG$ needs $(t+W)\geq2W$ time to collect $o_1$ in a return-trip from the depot, while $CIOPT(I)= W$ for the resulting instance $I$. 

 If $\frac{1}{2}W\leq t < W$, then a second order $o_2=\{s_2^1\}$ with $d(l_d,s_2^1)=t$ arrives at time $r_2=t$ (but after the picker dispatched). Then, in the resulting instance $I$, batch $\{o_1,o_2\}$ can be collected in time $CIOPT(I)=2t$, while  $ALG$ collects $o_1$ and $o_2$ in two separate batches with $ALG^{Pcart-N}(I)=t+W+2t>4t$.

 If $ t < \frac{1}{2}W$, then a second order $o_2=\{s_2^1\}$ with $d(l_d,s_2^1)=\frac{1}{2}W$ arrives at time $r_2=t$ (but after the picker dispatched). For the resulting instance $I$, it is possible to collect both orders in the single batch $\{o_1,o_2\}$ in time $CIOPT(I)=W$, while $ALG(I)=t+W+W\geq2W$, because it picks $o_1$ and $o_2$ in  separate batches.

In all the cases above, the ratio $\frac{ALG}{CIOPT(I)}\geq2$ for the respective instance $I$ in \textit{Pcart-N}.

Observe that in the unfortunate examples for \textit{Pcart-N} and \textit{Pcart}, $c\geq2$. If $c=1$, a weaker lower bound of 1.5 can be straightforwardly derived in both cases from the unfortunate instance created for OOBSRP in \textit{Robot} in the Electronic Companion~\ref{sec:proof_LB_abs_general}. 
\end{proof}

\section{Computational experiments}\label{sec:experiments}
The analysis of Section~\ref{sec:asymp_opt} showed that \textit{Reopt} may converge to an optimal policy with an increasing instance size. In this section, we compare the results of \textit{Reopt} to optimality (measured by $CIOPT$) experimentally and investigate the gap to optimality in small instances.

Note the computational challenges associated with these experiments. In order to make representative measurements across diverse warehouse settings, we create 2600 OBSRP$^*$ instances and solve more than 2380 of those to \textit{optimality}. The quantity of items that need to be picked in the created instances ranges between 4 and 50, and OBSRP$^*$, which combines batching and routing, is NP-hard in the strong sense \citep[c.f. ][]{Loeffler2022}. 
Given the scale of the endeavor, we limit our studies to the interventionist versions of \textit{Reopt}, i.e., systems \textit{Pcart} and \textit{Robot}.
Section~\ref{sec:exp_framework} explains the data generation and Section~\ref{sec:exp} reports on experiments.  
\subsection{Data sets} \label{sec:exp_framework}

We randomly generate instances grouped in 10 settings (see Table~\ref{tab:OverSettings}) that feature different warehouse environments. Each setting comprises 10 instances of each order size $n\in\{3, 4, \ldots, 15\}$. The procedure is repeated for two picking systems -- \textit{Pcart} and \textit{Robot}. Thus in total, 2600 instances (2 systems $\times$ 10 settings $\times$ 13 sizes $\times$  10 ) are created. 

In \textit{the Base setting }\textit{(Base)}, the picker's speed equals 0.8~m/sec \citep[cf. ][]{Henn2012} and it takes 10~sec to collect one item at each picking location. The warehouse consists of 10 aisles and 3 cross-aisles, each of which has a width of 3~m. One single shelf row marks the left and right outer borders of the storage area, respectively, and neighboring aisles are separated by two shelf rows placed alongside each other (cf. Figure~\ref{fig:warehouse}). The shelves measure 3~m in width and 1~m in depth. One aisle contains 20 shelves. Thus, the warehouse has a length of 50~m (10 aisles $\times$ 3~m $+$ $(1+9\cdot2+1)$ shelf rows $\times$ 1~m) and a width of 69~m (20 shelves $\times$ 3~m $+$ 3 cross-aisles $\times$ 3~m). Each shelf has slots for three distinct \textit{stock keeping units (SKUs)}, i.e., distinguishable product items, which results in a granularity of 1~m and provides $1200$ slots for SKUs (20 shelves per aisle $\times$ 3 $\times$ 20 shelf rows)   in the warehouse in total. The depot is positioned at the lower-left corner of the warehouse. The batch capacity is $c=2$. The number of items $k(j)$ in each order $o_j$ is uniformly distributed in set the $ \{1,2,...,\overline{k}\}, \overline{k}=4$, which reflects a common scenario in e-commerce, where the typical order contains fewer than two items on average \citep[c.f.][]{Boysen2019, Xie2023}. 
The storage locations of ordered items are uniformly distributed among all storage locations \textit{(uniform dispersion)}. The orders arrive according to a homogeneous Poisson process with an arrival rate $r$.   
In 
preliminary experiments, we investigated different arrival rates $r$ for shifts of 4h, increasing the rates in steps of 10 orders/shift, and found that optimal arrival rates in the \textit{Base} setting, at which the system remains stable, are  $r^{\text{Pcart}}=90$ orders/shift and $r^{\text{Robot}}=110$ orders/shift for \textit{Pcart} and \textit{Robot}, respectively. So, we set the arrival rates to these numbers. For \textit{Pcart}, the 130 instances in \textit{Base}, which form 13 groups of 10 instances according to their size $n$, are generated randomly and independently from each other. For better comparability, the 130 instances in \textit{Base} for \textit{Robot} are identical to those in \textit{Pcart}, except for the sequence of arrival times, which was produced using a higher stable rate $r^{\text{Robot}}$.  
We generate the remaining nine settings, by taking the instances of \textit{Base} and changing their specific characteristics in a one-factor-at-a-time design (see Table~\ref{tab:OverSettings}). For better control of the impact of warehouse characteristics, we keep the instances unchanged (including the coordinates of the generated items and the order arrival times) over the different settings, unless explicitly stated otherwise. 

We investigate different batch capacities in settings \textit{Smallbatches} and \textit{Largebatches}, where we set $c=1$ and $c=4$, respectively (all other instance data remained unchanged). In \textit{Smallrate} and \textit{Largerate}, the arrival rates are set to $r^{\text{Pcart}}=70$,  $r^{\text{Pcart}}=110$ and $r^{\text{Robot}}=90$ and $r^{\text{Robot}}=130$ for \textit{Robot} and \textit{Pcart}, respectively. We do it by generating 
new arrival times for the instances in \textit{Base}. We investigate warehouse geometries in \textit{Largewarehouse}  (20 aisles) and \textit{Lesscrossaisles} (2 cross-aisles). Observe that the lengths and widths of the shelves, aisles, and cross-aisles, as well as the number of shelves per aisle, remain the same so that the size of the warehouse changes accordingly. In particular, in \textit{Largewarehouse}, the warehouse has a length of 100~m (20 aisles $\times$ 3~m $+$ $(1+19\cdot2+1)$ shelf rows $\times$ 1~m) and a width of 69~m is identical to \textit{Base}; in \textit{Lesscrossaisles}, the warehouse length of 50~m is identical to \textit{Base}, and the width is 66~m (20 shelves $\times$ 3~m $+$ 2 cross-aisles $\times$ 3~m).  In \textit{Largewarehouse}, we generate the positions of the ordered items anew. We do not have to do it in \textit{Lesscrossaisles}, since the number of SKU slots remains the same (1200) and only the middle cross-aisle disappears. In \textit{Classbaseddispertion}, SKUs are stored in the warehouse according to their turnover rates, so that we take instances of \textit{Base} and generate new picking positions. We randomly assign ordered items to three classes: A, B, and C with probabilities of 52\%, 36\%, and 12\%. Items in A refer to high-turnover products, which occupy slots in the first aisle. Items in B occupy aisles 2 to 4. And aisles in C are low-turnover products, which occupy aisles 5 to 10. We randomly assign a storage position to each ordered item in the respective aisles of its class. Finally, in \textit{Smallorders} and \textit{Largeorders}, the maximal order size is set to $\overline{k}=2$ and $\overline{k}=8$, respectively. For this, we have to generate the orders (including their size, positions of the items, and arrival times) anew in the instances of \textit{Base}. The generated instances can be found in the Electronic Companion.

We performed the experiments on the Compute Canada cluster by using 1 CPU with a time and memory limit of at most 1h and 350G, respectively.  For each instance $I$, we compute $CIOPT(I)$ by solving the respective OBSRP* exactly with the dynamic programming approach described in \citet{Lorenz2024}. We calculate $Reopt(I)$ by letting the orders arrive dynamically according to their arrival rates and applying exact reoptimization as described in Section~\ref{sec:onlineproblem} using the straightforwardly adapted dynamic programming approach of \citet{Lorenz2024}.

\begin{table}
\caption{Overview of the 10 settings of the data set }\label{tab:OverSettings}
\centering
\scriptsize{
\begin{tabular}{l p{0.2\linewidth}p{0.2\linewidth}p{0.2\linewidth}}
\toprule
Changed factor &\multicolumn{3}{c}{Settings}\\
\midrule
Number of aisles&  --  & Base & Largewarehouse   \\
&    & (10 aisles) & (20 aisles)   \\[0.2cm]

Number of cross-aisles& Lesscrossaisles   & Base &  --  \\
&  (2 cross-aisles)  & (3 cross-aisles) &   \\[0.2cm]

Storage policy&  -- & Base &  Classbaseddispersion  \\
&&  (uniform)  & (class-based) \\[0.2cm]

Maximal order size& Smallorders   & Base &  Largeorders  \\
&  (2 SKUs)  & (4 SKUs) &  (8 SKUs) \\[0.2cm]

Batching capacity& Smallbatches   & Base &  Largebatches  \\
&  ($c=1$)  & ($c=2$) &  ($c=4$) \\[0.2cm]

Arrival rate& Smallrate   & Base &  Largerate  \\
&  ($r^{\text{Pcart}}=70$,   & ($r^{\text{Pcart}}=90$,  &  ($r^{\text{Pcart}}=110$, \\
&  $r^{\text{Robot}}=90$)  & $r^{\text{Robot}}=110$) &  $r^{\text{Robot}}=130$) \\

\midrule[\heavyrulewidth]
  \end{tabular}}
\end{table}

\subsection{Optimality ratios of \textit{Reopt} in \textit{Pcart} and \textit{Robot}} \label{sec:exp}
For each instance $I$, we compute the \textit{optimality ratio} as $\frac{Reopt(I)}{CIOPT(I)}$. 
Tables~\ref{tab:compratio_worst_Pcart} and \ref{tab:compratio_worst} report worst observed optimality ratios in \textit{Pcart} and \textit{Robot}. The average observed optimality ratios for \textit{Pcart} and \textit{Robot} can be found in Tables~\ref{tab:compratio_avg_Pcart} and \ref{tab:compratio_avg} in Electronic Companion~\ref{sec:exp_ratios_Pcart}.  

According to our experiments, \textit{Reopt}'s results are rather close to optimality even in small-sized instances. For the smallest tested instance sizes (with the number of orders $n<5$), the average ratio over all settings equaled 1.10 and 1.09, for \textit{Pcart} and \textit{Robot}, respectively; for the largest tested instance sizes ($n>13$), it decreased to 1.05, for both cart types. The worst observed optimality ratio across \textit{all} the instances equaled 1.34 for \textit{Pcart} and 1.32 for \textit{Robot}; and the respective ratios for larger instances ($n>13$) decreased and equaled 1.13 and 1.16, respectively.  

To statistically validate the monotonic decreasing relationship between the observed optimality ratios and the instance size 
$n$, we conducted a Spearman's rank correlation test \citep[see e.g.,][]{Corder2009}. For each of the 10 settings, we computed the Spearman's correlation coefficient for the optimality ratios across all solved instances of different instance sizes (130 observations for most settings, fewer for Largeorders and Largebatches, see Tables~\ref{tab:compratio_worst_Pcart} and~\ref{tab:compratio_worst}). In \textit{Pcart} (\textit{Robot}), for all settings except Largeorders, the coefficients ranged from -0.68 to -0.25  (from -0.61 to -0.27) with p-values of less than 0.04 (less than 0.01) indicating a moderate but statistically significant decrease. For Largeorders, the Spearman's correlation coefficient was -0.23 (-0.16) and the p-value of 0.07 (0.20) may be explained by the lower number of observations and the limitation in instance size $n$.

\begin{table}
\caption{Worst observed optimality ratios of \textit{Reopt} in   \textit{Pcart}}\label{tab:compratio_worst_Pcart} 
\scriptsize{
\begin{tabular}{lrrrrrrrrrrrrr}
\toprule
Setting  & \multicolumn{13}{c}{\underline{The number of orders $n=$}}\\[0.05cm]
 &  3 & 4 & 5 & 6 & 7 & 8 & 9 & 10 & 11 & 12 & 13 & 14 & 15  \\
\midrule
Base & 1.18 & 1.14 & 1.16 & 1.12 & 1.15 & 1.09  & 1.20 & 1.08 & 1.14 & 1.14 & 1.07 & 1.10 & 1.11  \\
Largewarehouse & 1.21 & 1.15 & 1.15 & 1.18 & 1.18 & 1.12 & 1.18 & 1.15 & 1.11 & 1.12 & 1.13 & 1.12 & *1.10 \\
Lesscrossaisles & 1.16 & 1.24 & 1.18 & 1.10 & 1.10 & 1.14 & 1.14 & 1.10 & 1.12 & 1.09 & 1.07 & 1.13 & 1.13  \\
Classbaseddispersion & 1.20  & 1.14 & 1.18 & 1.12 & 1.16 & 1.11  &  1.14 & 1.14 & 1.13 & 1.11 & 1.10 & 1.10 & *1.08  \\
Smallorders & 1.34 & 1.33 & 1.22 & 1.11  & 1.16 & 1.08 & 1.16 & 1.08 & 1.07 & 1.09 & 1.23 & 1.10 & 1.09  \\
Largeorders & 1.18 & 1.21 & 1.10 & 1.08 & 1.12 & *1.17 & *1.10 & -- & --  & -- & -- & -- & --  \\
Smallbatches & 1.15 & 1.15 & 1.08 & 1.08 & 1.07 & 1.06 & 1.06 & 1.05 & 1.05 & 1.04 & 1.04 & 1.04 & 1.03  \\
Largebatches  & 1.17  & 1.17 & 1.31 & 1.16  & 1.17 & 1.12 & 1.13 & -- & -- & -- & -- & -- & --   \\
Smallrate & 1.14 & 1.17 & 1.13 & 1.11 & 1.08 &  1.12 & 1.05 & 1.05 & 1.10 & 1.15 & 1.08 & 1.12 & 1.09   \\
Largerate  & 1.19  & 1.14 & 1.19 & 1.21 & 1.12  & 1.11 & 1.13 & 1.12 & 1.12 & 1.14 & 1.13 & 1.11 & *1.08  \\
\midrule
Total  & 1.34  & 1.33 & 1.31 & 1.21 & 1.18 & 1.17 & 1.20 & 1.15 & 1.14 & 1.15 & 1.23 & 1.13 & 1.13 \\
\midrule[\heavyrulewidth]
\multicolumn{14}{l}{\scriptsize{\textit{Note.} $^*$ Results for less than 10 (out of 10) instances. CIOPT did not solve 1 or 2 instances to optimality.}}\\
\multicolumn{14}{l}{\scriptsize{\quad\quad~~ -- No results reported, since CIOPT solved less than 8 instances to optimality.}}
  \end{tabular}}
\end{table}

\begin{table}
\caption{Worst observed optimality ratios of \textit{Reopt} in  \textit{Robot}}\label{tab:compratio_worst}  
\scriptsize{
\begin{tabular}{lrrrrrrrrrrrrr}
\toprule
Setting & \multicolumn{13}{c}{\underline{The number of orders $n=$}} \\[0.05cm]
  &  3 & 4 & 5 & 6 & 7 & 8 & 9 & 10 & 11 & 12 & 13 & 14 & 15  \\
\midrule
Base & 1.18 & 1.24 & 1.12 & 1.10 & 1.18 & 1.05 & 1.12 & 1.09 & 1.06 & 1.09 & 1.07 & 1.11 & 1.08   \\
Largewarehouse & 1.32 & 1.14 & 1.19 & 1.24 & 1.12 & 1.08 & 1.12 & 1.09 & 1.07 & 1.08 & 1.11 & 1.09 & 1.10  \\
Lesscrossaisles & 1.25 & 1.25 & 1.31 & 1.12 & 1.16 & 1.13 & 1.13 & 1.10 & 1.17 & 1.11 & 1.14 & 1.16 & 1.12 \\
Classbaseddispersion & 1.15  & 1.22 & 1.10 & 1.18 & 1.14 & 1.18 & 1.08 & 1.09 & 1.08 & 1.04 & 1.08 & 1.10 & 1.10  \\
Smallorders & 1.19 & 1.29 & 1.17 & 1.11 & 1.10 & 1.08 & 1.13 & 1.07 & 1.08 & 1.06 & 1.06 & 1.12 & 1.07   \\
Largeorders & 1.12 & 1.11 & 1.22 & 1.07 & 1.10 &  *1.12 & *1.10 & -- & -- & -- & -- & -- & -- \\
Smallbatches & 1.22 & 1.24 & 1.19 & 1.06 & 1.15 & 1.08 & 1.14 & 1.11 & 1.09& 1.07 & 1.10 & 1.06 & 1.06 \\
Largebatches  & 1.18  & 1.24 & 1.15 & 1.09 & 1.11 & 1.11 & 1.15 & *1.12 & -- & -- & -- & -- & --  \\
Smallrate & 1.22 & 1.15 & 1.11 & 1.08 & 1.07 & 1.09 & 1.09& 1.10 & 1.10 & 1.09 & 1.06 & 1.10 & 1.08  \\
Largerate  & 1.25  & 1.12 & 1.23 & 1.08  & 1.17 & 1.09 & 1.15 & 1.10 & 1.09 & 1.09 & 1.13 & 1.11 & *1.08 \\
\midrule
Total  & 1.32  & 1.29 & 1.31 & 1.24 & 1.18 & 1.18 & 1.15 & 1.12 & 1.17 & 1.11 & 1.14 & 1.16 & 1.12 \\
\midrule[\heavyrulewidth]
\multicolumn{14}{l}{\scriptsize{\textit{Note.} $^*$ Results for less than 10 (out of 10) instances. CIOPT did not solve 1 or 2 instances to optimality.}}\\
\multicolumn{14}{l}{\scriptsize{\quad\quad~~ -- No results reported, since CIOPT solved less than 8 instances to optimality.}}
  \end{tabular} }

\end{table}

\section{Conclusion}\label{sec:conclusion}
This paper performs an optimality analysis of reoptimization policies for \textit{the Online Order Batching, Sequencing, and Routing Problem (OOBSRP)}. The focus is on \textit{immediate reoptimization (Reopt)}, where the current (partial) solution adjusts each time a new order arrives. We consider warehouses employing manual pushcarts and warehouses employing robotic carts. Given that picking instructions in traditional warehouses may only be transmitted at the depot, we additionally examine the \textit{noninterventionist} version of \textit{Reopt} for manual pushcarts. In total, we investigate three systems, each featuring a specific reoptimization policy and cart technology: \textit{Robot} for interventionist \textit{Reopt} and robotic carts, \textit{Pcart} for interventionist \textit{Reopt} and manual pushcarts, as well as \textit{Pcart-N} for noninterventionist \textit{Reopt} and manual pushcarts. 

OOBSRP can be interpreted as a nontrivial combination of the \textit{online batching problem} and the \textit{online traveling salesman} problem. To the best of our knowledge, there is currently no clarity on the performance gaps to optimality for \textit{any} OOBSRP policy. \textit{Reopt} is chosen for analysis due to its prominence in warehousing literature and its ability to  leverage recent algorithmic progress for the offline version of OOBSRP.

Our extended competitive analysis demonstrates the \textit{almost sure asymptotically optimal} performance of \textit{Reopt} under very general stochastic assumptions in all three examined systems -- \textit{Robot}, \textit{Pcart}, and \textit{Pcart-N}.  Our stochastic assumptions can describe a variety of order arrival patterns relevant for practice. On the formal side, they encompass arrival times that are modeled as order statistics or as a homogeneous Poisson process with specific arrival rates. 

Even if we drop stochastic assumptions and examine \textit{any} theoretically possible OOBSRP instances,  \textit{Reopt} performs well, with no policy improving its result by more than 50\% in sufficiently large  instances. Anomalies are possible in small OOBSRP instances, which cannot be overcome by \textit{any} deterministic algorithm (see strict competitive ratios  in Section~\ref{sec:abs_CR}). However, no policy can improve \textit{Reopt}'s result by more than 75\% regardless of the instance size. 

Having established the almost sure \textit{asymptotic} optimality of \textit{Reopt} analytically under various stochastic scenarios, we show empirically that \textit{Reopt} remains rather close to the complete-information optimality already in small instances in \textit{Pcart} and \textit{Robot}.

This paper provides insights into ongoing debates regarding the benefits of waiting and anticipation for online policies in warehouse operations. Indeed, the performance gaps of \textit{Reopt},
which eschews both waiting and anticipation, serve as a benchmark for assessing the benefits of these
concepts  (see Section~\ref{sec:contribution}).

Several questions remain for future research, including the open status of  Conjecture~\ref{con:ConjecturePoisson} and determining \textit{stable rates} $\lambda$ when order arrival times follow a Poisson process. Further performance analysis is needed for other online algorithms, including simple policies like  'first-come-first-served' and S-shape routing. An optimal online policy is still unknown, especially for small OOBSRP instances.  Exploration of additional OOBSRP variants and extensions, including multiple pickers, picking location assignment, advanced information on orders, and order due dates is suggested as avenues for future research. Furthermore, the analysis of alternative objectives, particularly those related to the service level (e.g., tardiness), remains an outstanding research area.

\section*{Acknowledgments}\label{sec:acknowledgements}
We would like to extend our gratitude to the Government of Quebec (Ministère des Relations internationales et de la Francophonie) and BayFor for their support. We also acknowledge Compute Canada for providing the computational resources necessary for this research. 

\bibliography{AGV_picking.bib}


\newpage
\setcounter{page}{1}
\section*{\Large{Electronic Companion}}
\renewcommand{\thefigure}{\Roman{figure}}
\setcounter{figure}{0}
\setcounter{section}{0}
\renewcommand\thesection{\Alph{section}}
\section{Proof of Lemma \ref{prop:cover}} \label{sec:Proof_Lemma_warehouse_traversal}

Let us call an intersection of an outer aisle and an outer cross-aisle a \textit{corner}. The shortest time to traverse all the picking locations of the warehouse starting from the given position $s$ and ending in the given position $t$ cannot be larger than the time of the following route (see Figure~\ref{fig:UB_batch_cover}): Move from $s$ to some corner of the warehouse, visit all the picking locations following an S-shape route starting from this \textit{(first)} corner and ending in the respective \textit{last} corner, after that move to $t$. Obviously, the S-shape route to visit all the picking locations of the warehouse takes time $(a\cdot W+L)$. By a smart selection of the \textit{first} corner, where we start this S-shape route,  the total time $\zeta$ to reach this first corner from $s$ and then reach $t$ from the \textit{last} corner, does not exceed $(W+L)$.

Let $w_s^{min}$ and $w_t^{min}$ be the shortest horizontal distances from $s$ and $t$ to reach an outer cross-aisle, respectively (see Figure~\ref{fig:UB_batch_cover}). Observe that $w_s^{min}\leq 0.5\cdot W$ and $w_t^{min}\leq 0.5\cdot W$. If $w_s^{min}\leq w_t^{min}$, the picker shall move from $s$  along  $w_s^{min}$, then:
\begin{align}
    & \text{vertical component of }\zeta \nonumber 
    \\\leq & \ w_s^{min}+\max\{W-w_t^{min}, w_t^{min} \}\leq W
\end{align}
If $w_s^{min}\geq w_t^{min}$, the picker shall select the first corner such, that she can move from the last corner to $t$ along $w_t^{min}$, then:
\begin{align}
    & \text{vertical component of }\zeta \nonumber \\
     \leq & \ \max\{W-w_s^{min}, w_s^{min} \}+ w_t^{min}\leq W.
\end{align}

\begin{figure}[b]
    \centering
    \includegraphics[scale=0.5]{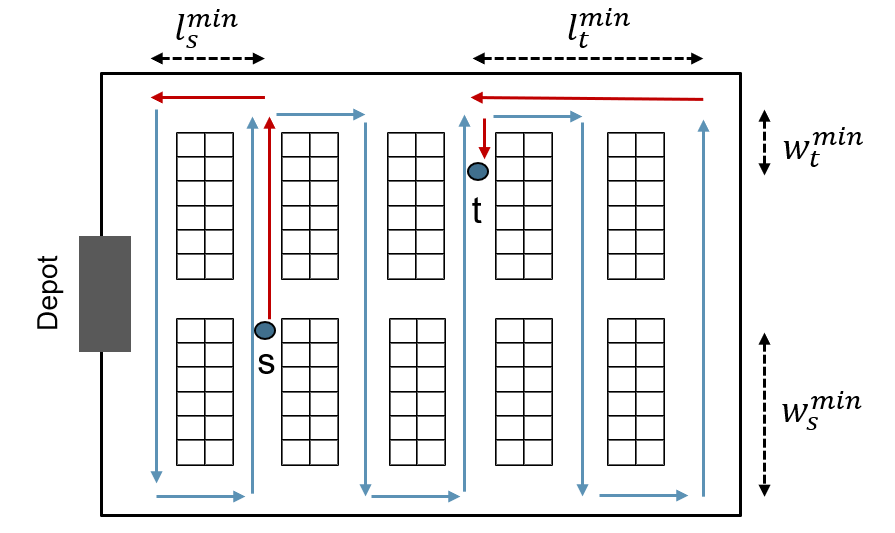}
    \caption{Warehouse traversal starting  in $s$ and ending in $t$\label{fig:UB_batch_cover}
    }
\end{figure}

By examining $l_s^{min}$ and $l_t^{min}$, which are the shortest horizontal distances from $s$ and $t$ to reach an outer aisle, respectively, we receive that the horizontal component of $\zeta\leq L$.

\section{Proof of Theorem~\ref{th:a.s.Poisson}} \label{sec:Proof_Poisson_a.s.}
Recall that $\vert Q^{Reopt}(I(n)^{rand}\vert $ is the queue length of available orders for \textit{Reopt} at the arrival of the last order $R_n$. The assumption $ \lim_{n\rightarrow \infty} \frac{\vert Q^{Reopt}(I(n)^{rand})\vert}{n} = 0 $ a.s. implies:
\begin{align}
  \frac{Reopt(I(n)^{rand})-R_n}{n} 
 \leq  \left(\frac{\vert Q^{Reopt}(I(n)^{rand})\vert}{c} +1\right) \cdot \frac{u}{n} \xrightarrow[]{n\rightarrow \infty} 0 \ \ \text{ a.s.}  \label{eq:Gn_limit} 
\end{align} 
where $u$ is the warehouse traversal constant from Lemma~\ref{prop:cover}. The right-hand side refers to a feasible picking plan, when the cart is fully packed with $c$ orders in each batch (if possible) and, to collect a batch, the picker traverses all the picking locations of the warehouse.
Since the inter-arrival times of the customer orders $X_i, i \in \{1,...,n\} $ are i.i.d., we can apply the Strong Law of Large Numbers:
\begin{align}
   \frac{R_n}{n}=\frac{\sum\limits_{i=1}^n X_i}{n} \xrightarrow[]{n\rightarrow \infty} \frac{1}{\lambda} < \infty  \ \ \text{a.s.} \label{eq:Rn_convergence}
\end{align}
Putting (\ref{eq:Gn_limit}) and (\ref{eq:Rn_convergence}) together:
\begin{align}
     \frac{Reopt(I(n)^{rand})}{ CIOPT(I(n)^{rand})}   
    \leq & \ \frac{R_n+Reopt(I(n)^{rand})-R_n}{R_n} \\
    = & \  1+ \frac{Reopt(I(n)^{rand})-R_n}{n}\cdot\frac{n}{R_n} \\
    & \ \xrightarrow[]{n\rightarrow \infty} 1 \qquad {a.s.}
\end{align}

\section{Proof of Theorem ~\ref{th:stoch_optimality}} \label{sec:Proof_stoch_opt}
This proof uses the Order Statistics Property of the homogeneous Poisson Process (cf. \citet{Feigin1979}):
\begin{proposition}\label{prop:OS_property}
Suppose that order arrival times $R_1,R_2, ...$ follow a homogeneous Poisson process with rate $\lambda$. Let $N$ be the number of arrival times in the given time interval $[0,t]$. Then, conditional on $N=n$, the successive arrival times $(R_1,...,R_n)$ are distributed as the order statistics of $n$ independent identically distributed random variables $(Y_i)_{i \in \mathbb{N}}$ with a uniform distribution on $[0,t]$.

\end{proposition}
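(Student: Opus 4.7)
\medskip

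\noindent\textbf{Proof proposal for Proposition~\ref{prop:OS_property}.} The plan is to compute the joint conditional density of $(R_1,\ldots,R_n)$ given $N(t)=n$ directly from the definition of the homogeneous Poisson process (i.i.d.\ exponential inter-arrival times), and then to verify that it coincides with the joint density of the order statistics of $n$ i.i.d.\ Uniform$[0,t]$ random variables. Since both random vectors take values in the simplex $\Delta_n(t)=\{(r_1,\ldots,r_n):0\leq r_1\leq r_2\leq \ldots\leq r_n\leq t\}$, an equality of densities on $\Delta_n(t)$ will suffice.

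First I would set up notation. Let $X_i=R_i-R_{i-1}$ for $i\geq 1$ (with $R_0:=0$). By the construction of the homogeneous Poisson process in Section~\ref{sec:poisson}, the $X_i$ are i.i.d.\ Exp$(\lambda)$, so $(X_1,\ldots,X_{n+1})$ has joint density $\lambda^{n+1}e^{-\lambda(x_1+\cdots+x_{n+1})}$ on $\mathbb{R}_{\geq 0}^{n+1}$. Changing variables from the $X_i$'s to the cumulative sums $R_i=\sum_{j\leq i}X_j$ (a transformation with unit Jacobian), the joint density of $(R_1,\ldots,R_{n+1})$ equals $\lambda^{n+1}e^{-\lambda r_{n+1}}$ on the simplex $\{0\leq r_1\leq\ldots\leq r_{n+1}\}$. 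Next I would encode the event $\{N(t)=n\}$ as $\{R_n\leq t<R_{n+1}\}$ and marginalize out $R_{n+1}$ by integrating it over $(t,\infty)$, which yields the joint density of $(R_1,\ldots,R_n)$ restricted to $\{N(t)=n\}$:
\begin{equation*}
f_{R_1,\ldots,R_n,\,N(t)=n}(r_1,\ldots,r_n)=\lambda^n e^{-\lambda t}\mathbf{1}_{\Delta_n(t)}(r_1,\ldots,r_n).
\end{equation*}

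Then I would use the well-known Poisson marginal $\mathbb{P}(N(t)=n)=e^{-\lambda t}(\lambda t)^n/n!$ (derivable from the same i.i.d.\ exponential structure by integrating the joint density over $\Delta_n(t)\times (t,\infty)$) and divide to obtain the conditional density
\begin{equation*}
f_{R_1,\ldots,R_n\mid N(t)=n}(r_1,\ldots,r_n)=\frac{n!}{t^n}\mathbf{1}_{\Delta_n(t)}(r_1,\ldots,r_n).
\end{equation*}
Finally, I would compare this with the density of the order statistics $(Y_{(1)},\ldots,Y_{(n)})$ of $n$ i.i.d.\ Uniform$[0,t]$ random variables $Y_1,\ldots,Y_n$, which is $n!/t^n$ on $\Delta_n(t)$ (the factor $n!$ accounts for the $n!$ permutations of the $Y_i$'s collapsing onto the same ordered tuple, each having density $1/t^n$). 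Equality of densities on a common support implies equality in distribution, which is exactly the claim.

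The only subtle step is the careful handling of the conditioning event $\{N(t)=n\}$ through the auxiliary variable $R_{n+1}$, which is needed because $\{N(t)=n\}$ is not a zero-measure event and cannot be handled by direct substitution into the unconditional density of $(R_1,\ldots,R_n)$; everything else is an elementary change-of-variables calculation.
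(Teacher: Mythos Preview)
Your proof is correct and follows the standard textbook derivation of the order-statistics property of the homogeneous Poisson process. Note, however, that the paper does not actually prove Proposition~\ref{prop:OS_property}: it is stated as a known fact with a reference to \citet{Feigin1979} and then used as an ingredient in the proof of Theorem~\ref{th:stoch_optimality}. So there is no ``paper's own proof'' to compare against; you have simply supplied a complete argument where the authors chose to cite the literature instead.
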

We use Proposition~\ref{prop:OS_property} to extend the analysis of Theorem~\ref{th:stoch_optimality} from the probability space described in Section~\ref{sec:orderstat} to the probability space of Section~\ref{sec:poisson}. Thereby, we use the following notation:
\begin{itemize}
\item Probability measures $\mathbb{P}_{os,unif(t)}$ for $t\in \mathbb{R}^+$, when the arrival times are independent uniformly$[0,t]$ -distributed random variables. Observe that in this case, order arrival times satisfy the order statistics property and, therefore, satisfy the conditions for Theorem~\ref{th:asymp_opt_makesp}. 
\item Probability measures $\mathbb{P}_{Pois(\lambda)}$ for $\lambda\in \mathbb{R}^+$, when  the arrival times $R=(R_i)_{i\in\mathbb{N}}$ follow a Poisson Point process with rate $\lambda$.  
\end{itemize}

We denote the optimality ratio of \textit{Reopt} for any given random instance $I^{rand}$ in a  given picking system (\textit{Pcart-N}, \textit{Pcart} or \textit{Robot}) as the respective measurable function $f: \Omega \rightarrow [1,\infty), I^{rand} \mapsto \frac{Reopt(I^{rand})}{CIOPT(I^{rand})}$, where $\Omega$ is the instance space.

The proof proceeds in four steps.

 First, we start with independently and uniformly $[0,t]$ -distributed order arrivals. The \textit{almost sure (a.s.)} asymptotic convergence of $f(I(n)^{rand})$ towards $1$ (cf. Theorem~\ref{th:asymp_opt_makesp}) directly implies the \textit{convergence in probability} of $f(I(n)^{rand})$ for $n\rightarrow \infty$ (cf. Proposition~10.3 in \citet{LeGall2022}). 
In \textit{Pcart-N}, \textit{Pcart} and \textit{Robot}, the ratio $f(I(n)^{rand})$ is bounded for all $n\in \mathbb{N}$ (see Proposition~\ref{prop:strict_CR}), so is the expected value $\mathbb{E}_{os,unif(t)}[f(I(n)^{rand}]\leq \sigma(Reopt)< \infty$. Therefore, the convergence in probability implies the \textit{convergence in mean} (see Proposition~10.4 in \citet{LeGall2022}):
\begin{align}
    \lim_{n\rightarrow \infty} \mathbb{E}_{os,unif(t)} [f(I(n)^{rand})]=1 \qquad \forall t \in \mathbb{R}^+\label{eq:L1_conv}
\end{align}

Second, suppose the arrival times follow a Poisson process with some rate $\lambda\in \mathbb{R}^+$ and let $I(t)^{rand}$ consist of all orders that arrive in the fixed time interval $[0,t], t\in \mathbb{R}^+$. Let denote the number of these orders as $N(t)\in \mathbb{N}$ and observe that $N(t)$ a Poisson-($\lambda\cdot t)$-distributed random variable. Then by Proposition~\ref{prop:OS_property}:
\begin{align}
     \mathbb{E}_{Pois(\lambda)}[f(I(t)^{rand}) \ \vert N(t)=n] 
    = \ \mathbb{E}_{os,unif(t)}[f(I(n)^{rand})]
\end{align}
Thus, for any $\lambda\in \mathbb{R}^+$ and any $t\in \mathbb{R}^+$, by (\ref{eq:L1_conv}): 
\begin{align}
    \lim_{n\rightarrow \infty} \mathbb{E}_{Pois(\lambda)} [f(I(t)^{rand}) \ \vert \ N(t)=n]=1 \label{eq:L1_conv_Poisson}
\end{align}

The convergence in (\ref{eq:L1_conv_Poisson}) implies that for any $\epsilon>0$, there exists $n_{\epsilon}\in \mathbb{N}$ such that
\begin{align}
\mathbb{E}_{Pois(\lambda)} [f(I(t)^{rand}) \ \vert \ N(t) = n] \leq 1 + \frac{\epsilon}{2} \qquad \forall n > n_{\epsilon}   
\end{align}

Third, by applying the Law of Total Expectation, we compute the \textit{unconditional} mean by examining groups of instances with $N(t)>n_{\epsilon}$ and with $N(t)\leq n_{\epsilon}$:
\begin{align}
      \mathbb{E}_{Pois(\lambda)} [f(I(t)^{rand}) ] 
     = & \  \mathbb{P}_{Pois(\lambda)}(N(t) \leq n_{\epsilon}) 
      \cdot  \ \mathbb{E}_{Pois(\lambda)} [f(I(t)^{rand}) \ \vert \ N(t) \leq n_{\epsilon}] \nonumber \\ 
     & +  \  \sum\limits_{n=n_{\epsilon}+1}^{\infty} \mathbb{P}_{ Pois(\lambda)}(N(t) = n) 
      \cdot \ \mathbb{E}_{Pois(\lambda)} [f(I(t)^{rand}) \ \vert \ N(t) = n] 
     \label{eq:L1_conv_step1}
\end{align}
By Proposition~\ref{prop:strict_CR} on the strict competitive ratio, $\mathbb{E}_{Pois(\lambda)} [f(I(t)^{rand}) \ \vert \ N(t) \leq n_{\epsilon}]\leq\sigma(Reopt)<\infty$. Moreover,  
the probability mass $\mathbb{P}_{ Pois(\lambda)}(N(t) > n_{\epsilon})\leq 1$ by definition. Hence:
\begin{align}
     \mathbb{E}_{ Pois(\lambda)} [f(I(t)^{rand}) ] 
     \leq & \  \sigma(Reopt) \cdot \mathbb{P}_{ Pois(\lambda)}(N(t) \leq n_{\epsilon}) 
    +  \ (1+\frac{\epsilon}{2})\cdot \mathbb{P}_{ Pois(\lambda)}(N(t) > n_{\epsilon}) \\
          \leq &  \sigma(Reopt)  \cdot  \mathbb{P}_{ Pois(\lambda)}(N(t) \leq n_{\epsilon})  + (1 + \frac{\epsilon}{2}) \label{eq:eq:L!_conv_laststep}
\end{align}
Observe that $n_{\epsilon}$ does not depend on the arrival rate $\lambda$, but $\lambda$ influences the probability of instances with $N(t)\leq n_{\epsilon}$. Let $\lambda_i>0, i \in \mathbb{N},$ be any increasing sequence of arrival rates such that $\lambda_i \rightarrow \infty$ if $i \rightarrow \infty$. Recall that the cumulative distribution function of the Poisson distribution is the \textit{regularized gamma function} $Q$, i.e., $\mathbb{P}_{ Pois(\lambda)}(N(t) \leq n_{\epsilon})=Q(n_{\epsilon}, \lambda \cdot t)$. By definition of $Q$ for fixed $n_{\epsilon}$, there exists an $i_{\epsilon}\in \mathbb{N}$ such that for all $i\geq i_{\epsilon}: \sigma(Reopt)  \cdot \mathbb{P}_{ Pois(\lambda)}(N(t) \leq n_{\epsilon})= \sigma(Reopt)  \cdot Q(n_{\epsilon}, \lambda_i \cdot t)\ \leq \frac{\epsilon}{2} $.
After implementing this in (\ref{eq:eq:L!_conv_laststep}):
\begin{align}
    \mathbb{E}_{ Pois(\lambda_i)} [f(I(t)^{rand}) ] \leq 1 + \epsilon \qquad \forall i \geq i_{\epsilon} \label{eq:laststep}
\end{align}
Since (\ref{eq:laststep}) is valid for any $\epsilon>0$, we proved that for any increasing sequence $(\lambda_i)_{i\in \mathbb{N}}$ of arrival rates with $\lambda_i\rightarrow \infty $ if $i\rightarrow \infty$
\begin{align}
   \lim_{i\rightarrow \infty}  \mathbb{E}_{ Pois(\lambda_i)} [f(I(t)^{rand}) ] \rightarrow 1
\end{align}

Finally, the second convergence statement of Theorem~\ref{th:stoch_optimality} follows by Markov's inequality for the nonnegative random variable $(f(I(t)^{rand})-1)$ and any given $\epsilon>0$:
\begin{align}
     \mathbb{P}_{Pois(\lambda_i)}(f(I(t)^{rand}) \geq 1+ \epsilon) 
    \leq  \ \frac{ \mathbb{E}_{ Pois(\lambda_i)} [f(I(t)^{rand}) -1]}{\epsilon} 
    \rightarrow 0 \qquad \text{if }i\rightarrow \infty
\end{align}

\section{Auxiliary Lemma on the asymptotic competitive ratio }\label{sec:aux_lemma_asymp_cr}
\begin{lemma}\label{lem:lim_asymp_cr}
In \textit{Pcart} and \textit{Robot}, the following holds true:
\begin{align}
lim_{n\rightarrow \infty} \frac{Reopt(I(n))}{CIOPT(I(n))} = \alpha \\
\text{ together with } \quad \lim_{n\rightarrow \infty}CIOPT(I(n)) \rightarrow \infty
\end{align}
implies that $\sigma_{asymp.}(Reopt)\geq \alpha$.
\end{lemma}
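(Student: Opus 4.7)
The plan is to argue by contradiction against the definition of the asymptotic competitive ratio. Suppose, for the sake of contradiction, that $\sigma_{\text{asymp.}}(Reopt) < \alpha$. Then, by definition, there exists some $\alpha' < \alpha$ and a constant $const$ (independent of the instance, possibly depending on the warehouse geometry) such that, for every instance $I$:
\begin{equation*}
Reopt(I) \leq \alpha' \cdot CIOPT(I) + const.
\end{equation*}

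I would then specialize this inequality to the sequence of instances $I(n)$ provided in the hypothesis. Dividing both sides by $CIOPT(I(n))$, which is positive and eventually arbitrarily large, yields
\begin{equation*}
\frac{Reopt(I(n))}{CIOPT(I(n))} \leq \alpha' + \frac{const}{CIOPT(I(n))}.
\end{equation*}
Now I would take the limit $n \to \infty$ on both sides. The second term on the right vanishes because $CIOPT(I(n)) \to \infty$ by assumption, while the left-hand side converges to $\alpha$ by the first hypothesis. This yields $\alpha \leq \alpha'$, contradicting the choice $\alpha' < \alpha$.

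There is no substantive obstacle here; the argument is essentially a direct unwinding of the definition of $\sigma_{\text{asymp.}}(Reopt)$ in equation (\ref{eq:asy_def}), together with the fact that an additive constant becomes negligible relative to an objective that grows without bound. The assumption $CIOPT(I(n)) \to \infty$ is precisely what is needed to absorb the $+const$ term; without it, the additive slack in the asymptotic-competitiveness definition could cover a ratio gap between $\alpha'$ and $\alpha$. The only care needed is to treat $\sigma_{\text{asymp.}}$ as an infimum: the contradiction with an arbitrary $\alpha' < \alpha$ (rather than with the infimum itself) is what legitimately forces $\sigma_{\text{asymp.}}(Reopt) \geq \alpha$.
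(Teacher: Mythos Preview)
Your proof is correct and follows essentially the same approach as the paper's own argument: assume by contradiction that $Reopt(I)\leq(\alpha-\epsilon)\,CIOPT(I)+const$ for some $\epsilon>0$, divide by $CIOPT(I(n))$, and use $CIOPT(I(n))\to\infty$ to obtain the contradiction $\alpha\leq\alpha-\epsilon$. Your remark about handling the infimum via an arbitrary $\alpha'<\alpha$ is, if anything, slightly more careful than the paper's phrasing.
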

\begin{proof}
In a proof by contradiction, suppose that for all $n\in \mathbb{N}$ and for all instances $I(n)$, $Reopt(I(n))\leq (\alpha-\epsilon) CIOPT(I(n)) +const$ for two constants $const, \epsilon >0$. Then $\frac{Reopt(I(n))}{CIOPT(I(n))}\leq (\alpha - \epsilon) + \frac{const}{CIOPT(I(n))}) $ for all $n\in \mathbb{N}$. But since $\frac{const}{CIOPT(I(n))}\xrightarrow{n \rightarrow \infty }0 $, this is a contradiction to  $lim_{n\rightarrow \infty}\frac{Reopt(I(n))}{CIOPT(I(n))}= \alpha$. 
\end{proof}
\section{Proof of Proposition~\ref{prop:asymp_cr_LBs}} \label{sec:proof_asyp_cr_LBs}

\begin{figure}[b]
    \centering
    \includegraphics[scale=0.35]{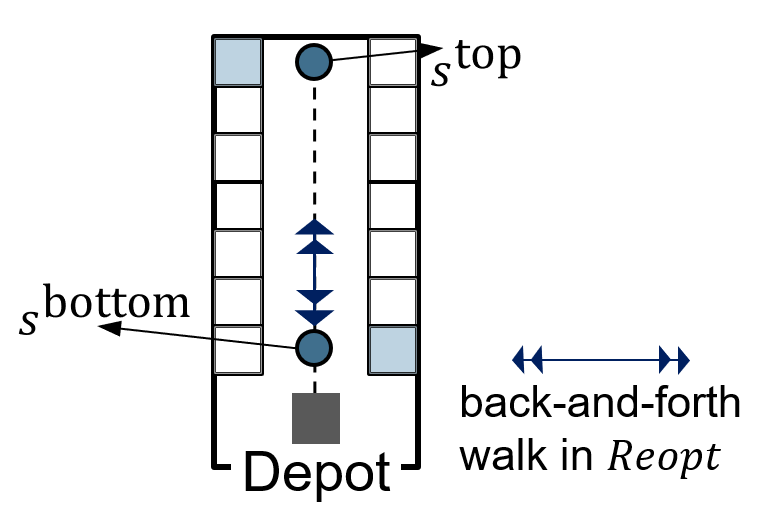}
    \caption{\textit{Robot}: Unfortunate large instance for \textit{Reopt} \label{fig:Asy_Robot_ex} \\ \scriptsize{ Note. $c=2$. Two-item orders, each with one item $s^\text{bottom}$ and one item $s^\text{top}$, arrive such that the picker in \textit{Reopt} keeps walking back and forth without completing any batch until the arrival of the last order.}}
\end{figure}

Figure~\ref{fig:Asy_Robot_ex} provides an unfortunate example $I^\text{worst}$ for \textit{Robot} with $c=2$ in a one-aisle warehouse; the example can be generalized for $c>2$ and more general warehouse layouts. 

Arriving orders in $I^\text{worst}$ consist of two items each: $s^\text{bottom}$ (placed at the bottom of the aisle) and $s^\text{top}$ (placed on the top of the aisle). Recall that $W$ is the width of the warehouse and $l^d$ denotes the location of the depot. Let  $d(l_d,s^\text{bottom})=2\epsilon$, $(l_d,s^\text{top})=W$, and $(s^\text{bottom},s^\text{top})=W-2\epsilon$. 

Instance $I^\text{worst}$ consists of $n=2k+1$ orders, $k\in \mathbb{N}$. The idea is to set the inter-arrival times of the orders such that \textit{Reopt} makes the picker oscillate between $s^\text{bottom}$ and the middle of the aisle without picking any item until the last order arrives.

The first order arrives at $r_1=2\epsilon$, the picker in \textit{Reopt} picks item $s^{\text{bottom}}$ of order $o_1$ at time $4 \epsilon$. 

At time $r_2=\frac{1}{2} W+3\epsilon - \frac{\delta}{2n-2}$, $o_2$ arrives and the picker in \textit{Reopt} is at distance  $\frac{1}{2}W-\epsilon- \frac{\delta}{2n-2}$ from $s^{\text{bottom}}$, with $s^{\text{bottom}}$ of order $o_1$ placed in her cart. \textit{Reopt} has two choices. The first one is to return to $s^{\text{bottom}}$ and batch the new order $o_2$ together with $o_1$ at the total cost of $r_2+\frac{1}{2}W-\epsilon- \frac{\delta}{2n-2}+W-2\epsilon=2W-2\cdot \frac{\delta}{2n-2}$. The second one is to continue to $s^{\text{top}}$ and pick $o_1$ and $o_2$ in separate batches at the total cost of $2\epsilon+W+(W-2\epsilon)=2W$. \textit{Reopt} decides for the first option. 

At time $r_3=W+2\epsilon-3\frac{\delta}{2n-2}$, $o_3$ arrives and the picker in \textit{Reopt} is at distance $\frac{\delta}{2n-2}$ from $s^{\text{bottom}}$. Now  \textit{Reopt} has two choices. The first one is to move up and finish order $o_1$ in a separate batch by picking $s^{\text{top}}$, then collect orders $o_2$ and $o_3$ together at the total cost of $r_3+W-2\epsilon-\frac{\delta}{2n-2}+W-2\epsilon=3W-2\epsilon-4\frac{\delta}{2n-2}$. The second one is to collect item  $s^{\text{bottom}}$ of $o_2$, finish batch $S_2=\{o_1,o_2\}$ and pick $o_3$ separately at the total cost of $r_3+\frac{\delta}{2n-2}+2(W-2\epsilon)=3W-2\epsilon-2\frac{\delta}{2n-2}$. \textit{Reopt} will decide for the first option.  

In general, at time $r_{2p}= \frac{2p-1}{2}W-(2p-5)\cdot\epsilon-(4p-3)\cdot \frac{\delta}{2n-2}, p\in \{2,3,...,k\}$, the picker in \textit{Reopt} is at distance $\frac{1}{2}W-\epsilon- \frac{\delta}{2n-2}$ from $s^{\text{bottom}}$. Now  \textit{Reopt} has two choices. The first one is to return to $s^{\text{bottom}}$ to batch the new order $o_{2p}$ with the commenced order $o_1$, then pick all the remaining orders in pairs. 
The second one is to continue to $s^{\text{top}}$ to finish the commenced batch $S_1=\{o_1\}$, then pick the remaining orders in $p-1$ pairs and the last order in a separate batch. 
\textit{Reopt} decide for the first option with the total cost of $r_{2p}+\frac{1}{2}W-\epsilon-\frac{\delta}{2n-2}+p(W-2\epsilon)=2pW-(4p-4)\epsilon-(4p-2)\frac{\delta}{2n-2}$. 

At time $r_{2p+1}= pW-(2p-4)\epsilon-(4p-1)\cdot \frac{\delta}{2n-2}, p\in \{2,3,...,k\}$, the picker is at distance $\frac{\delta}{2n-2}$ from $s^{\text{bottom}}$. \textit{Reopt} has two choices. The first one is to change the course and move up to $s^{\text{top}}$ to finish order $o_1$ in a separate batch, then pick the rest of the orders in pairs. The second one is to pick up item $s^{\text{bottom}}$ of $o_2$, finish batch $S_1=\{o_1,o_2\}$, then pick all orders in pairs except from batch $S_{p+1}=\{o_{2p+1}\}$, which is picked separately. \textit{Reopt} will decide for the first option at the total cost of $r_{2p+1}+W-2\epsilon-\frac{\delta}{2n-2}+p(W-2\epsilon)=(2p+1)W-(4p-2)\epsilon-4p\cdot \frac{\delta}{2n-2}$. 

Overall, to pick $n=2k+1$ orders of $I^\text{worst}$, \textit{Reopt} requires $Reopt(I^\text{worst})=(2k+1)W-(4k-2)\epsilon-4k\cdot \frac{\delta}{2n-2}=(2k+1)W-(4k-2)\epsilon-\delta$ time.  $CIOPT$ will pick the first order in a separate batch, which is completed at time $W$, and collect the remaining orders in pairs, so that $ CIOPT(I^\text{worst})=W+k\cdot (W-2\epsilon)=(k+1)W-2k\epsilon$. 
It follows that $\frac{Reopt(I^\text{worst})}{CIOPT(I^\text{worst}))}=2-\frac{W-2 \epsilon + \delta}{CIOPT(I^\text{worst})}\xrightarrow{k\rightarrow \infty}2 $ as $CIOPT(I^\text{worst})\xrightarrow{k\rightarrow \infty}\infty $. This on the other hand, implies that $\sigma_{asymp}(Reopt)\geq 2$ by Lemma~\ref{lem:lim_asymp_cr}.

\section{Proof of Proposition~\ref{prop:LB_abs_CR_general}}\label{sec:proof_LB_abs_general}

\begin{figure} [b]
    \centering
    \begin{subfigure}{0.49\textwidth}
        \includegraphics[width=0.9\textwidth]{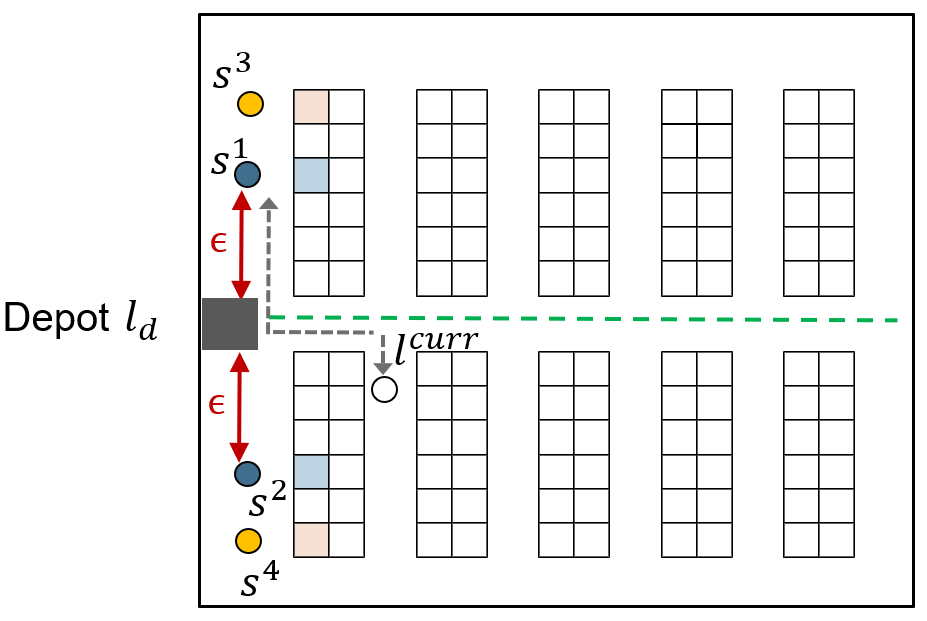}
        \caption*{ (a) Warehouse with the depot \textit{not} located in the corner}
        \label{fig:depot_in_aisle}
    \end{subfigure}
    ~ 
    \begin{subfigure}{0.48\textwidth}
        \includegraphics[width=0.91\textwidth]{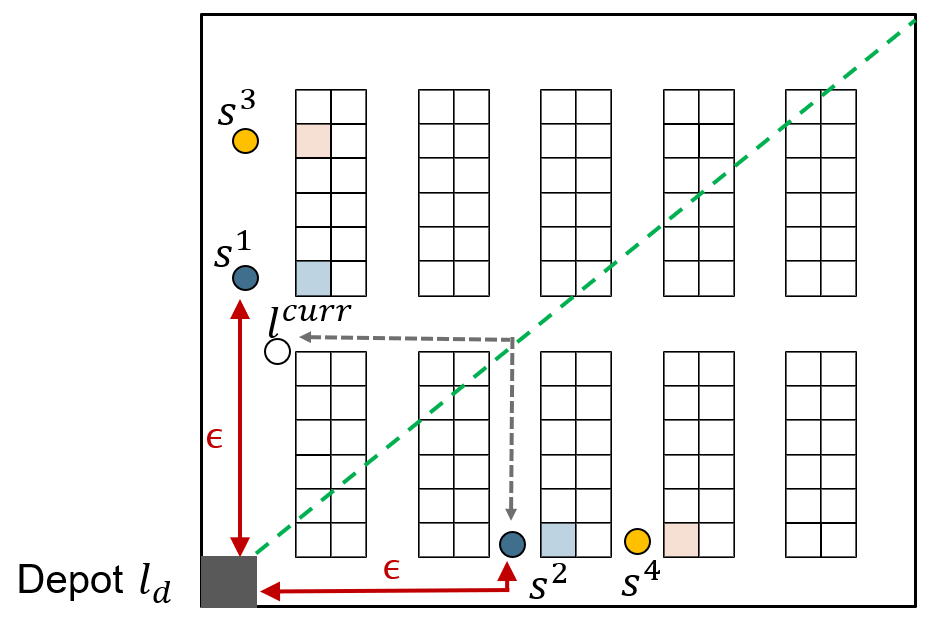}
        \caption*{ (b) Warehouse with the depot in the corner}
        \label{fig:depot_in_corner}
    \end{subfigure}
    \caption{Positioning of the potential order locations $s^1$ and $s^2$ in Proposition~\ref{prop:LB_abs_CR_general} for \textit{Robot} and \textit{Pcart} \label{fig:bad_all_algs_makesp} \\ \scriptsize{ Note. Observe that $d(s^1,l_d)=d(s^2,l_d)=\epsilon$ for some $\epsilon$, which depends on the location of $l^d$. The dotted green line separates the warehouse into two sections. For each location $l^{curr}$ in the same section as $s^1$, $d(l^{curr},s^2)\geq \epsilon$ and for each location $l^{curr}$ in the same section as $s^2$, $d(l^{curr},s^1)\geq \epsilon$.}}
\end{figure}

Theorem 3.1 for N-TSP and Theorem 3.3 for H-TSP by \citet{Ausiello1999} consider the case when requests are located on the \textit{real line}.

What remains is the translation of the specific positioning of the picking locations $s^1$ and $s^2$, from the real line to the warehouse, such that:
\begin{itemize}
\item They are at equal distance to the depot $d(l_d,s^1)=d(l_d,s^2)=\epsilon$.
\item \textit{Any} other location in the warehouse is at a larger distance to one of the picking locations:
\begin{align}
\max\{d(l^{curr},s^1), d(l^{curr},s^2)\}\geq \epsilon \label{eq:main_anyalg}
\end{align}
Observe that $d()$ refers to the warehouse metric as explained in Section~\ref{sec:problem}.
\end{itemize}

Figure~\ref{fig:bad_all_algs_makesp} explains, how to position $s^1$ and $s^2$ if  the depot $l^d$ is located \textit{i)} in a corner of the warehouse (Figure~\ref{fig:bad_all_algs_makesp}a) or \textit{ii)} in any other location (Figure~\ref{fig:bad_all_algs_makesp}b). 

To prove (\ref{eq:main_anyalg}), compare $d()$ with the Manhattan metric $m()$:
\begin{itemize}
    \item Obviously $d(l,l')\geq m(l,l')$ for any locations $l, l'$ in the warehouse.
    \item The green dotted line in Figure~\ref{fig:bad_all_algs_makesp} depicts locations $l$ that are \textit{Manhattan-equidistant} to $s^1$ and $s^2$ at a distance of \textit{at least} $\epsilon$: $m(l,s^1)=m(l,s^2)\geq \epsilon$.
    \item The green dotted line divides the warehouse into two \textit{sections}:  $s^1$ is in section 1 and $s^2$ is in section 2. If the picker's location $l^{curr}$ is in section 1, the picker must cross the line to reach $s^2$ and vice-versa. Thus, for any location  $l^{curr}$ in the warehouse, $\max\{d(l^{curr},s^1), d(l^{curr},s^2)\}\geq \max\{m(l^{curr},s^1), m(l^{curr},s^2)\}\geq\epsilon$.
\end{itemize}

\textbf{The case of  \textit{Robot}}. The main idea of Theorem 3.1 for N-TSP is that the upcoming request (which we can interpret as a single-item order) is located in one of two locations -- $s^1$ and $s^2$ -- placed on opposite sides of the depot $l^d$ at distance $d(l_d,s^1)=d(l_d,s^2)=\epsilon$. For any picker position $l^{curr}$ on the \textit{real line}, $\max\{d(l^{curr},s^1), d(l^{curr},s^2)\}\geq \epsilon$. Now, consider any algorithm $ALG$ and the respective picker location $l^{curr}$ at time $r_1$. There is an instance $I^{worst}$ defined on the \textit{real line}, in which request $o_1$ appears to the left of $l_d$ if the picker's position $l^{curr}$ is to the right of it, and \textit{vice versa}; i.e., $ALG$ is on the `wrong side' of the depot in this instance. $CIOPT$ has complete information about the location of $o_1$ and moves immediately in the right direction.  In \textit{Robot}, an unfortunate instance $I^{worst}$ in a warehouse of any geometry, with $n=1$ order and $r_1=\epsilon$ can be constructed by the same scheme as just described for of N-TSP. Indeed, by this scheme, $CIOPT(I^{worst})=\epsilon$ and $ALG(I^{worst})=\epsilon + \max\{d(l^{curr},s^1), d(l^{curr}),s^2)\}\geq 2\epsilon$. 

\textbf{The case of \textit{Pcart}}. In \textit{Pcart}, the proof uses the same positional scheme for the picking locations $s^1$ and $s^2$, and is \textit{adapted} from the proof of Theorem 3.3 from \citet{Ausiello1999}. It constructs seven unfortunate instances $I^{w1}$ to $I^{w7}$ depending on the decisions of the deterministic algorithm $ALG$:
\begin{itemize}
\item Instances $I^{w1}$ and $I^{w2}$, each  with one ($n=1$) single-item order arriving at $r_1=\epsilon$. The order's position is at $s^1$ in $I^{w1}$ and $s^2$ in $I^{w2}$, respectively.
\item Instances $I^{w3}$ and $I^{w4}$, each with two ($n=2$) orders arriving at $r_1=\epsilon$ and $r_2=3\epsilon$. The first order has two items at positions $s^1$ and $s^2$ in both instances. The second order has one item at position $s^1$ in $I^{w3}$ and $s^2$ in $I^{w4}$, respectively.
\item Instance $I^{w5}$ with one ($n=1$) order arriving at $r_1=\epsilon$ with two items at positions $s^1$ and $s^2$. 
\item Instances $I^{w6}$ and $I^{w7}$ with two orders ($n=2$). The first order has the same two items $s^1$ and $s^2$ and arrival time $r_1=\epsilon$ as in $I^{w5}$. The second order $o_2$ arrives at time $r_2=(3+q)\epsilon$, where $0\leq q< 4\left(\frac{9+\sqrt{17}}{8}\right)-5<1.6$ and is computed as explained below. In $I^{w6}$, order $o_2$ has one item at position $s^3$ located in the direction of $s^1$ away from the depot with $d(s^3, l^d)=(1+q)\epsilon, d(s^3, s^1)=q\epsilon$ and $d(s^3, s^2)=(2+q)\epsilon$ (see Figure~\ref{fig:bad_all_algs_makesp}). In $I^{w7}$, the single item of order $o_2$ is at position $s^4$, which is constructed along the same lines as $s^3$, but in the direction of $s^2$ away from the depot.
\end{itemize}
In all these instances, $CIOPT$ has complete information. If $c\geq 2$, 
\begin{align}
&CIOPT(I^{w1})=CIOPT(I^{w2})=2\epsilon \label{eq:cioptw1}\\
&CIOPT(I^{w3})=CIOPT(I^{w4})=4\epsilon \label{eq:cioptw3}\\
&CIOPT(I^{w5})=4\epsilon \label{eq:cioptw5}\\
&CIOPT(I^{w6})=CIOPT(I^{w7})=(4+2q)\epsilon \label{eq:cioptw6}
\end{align}

In contrast, no algorithm $ALG$ can differentiate between the instances $I^{w1}$ to $I^{w7}$ before time $t=\epsilon$, between instances $I^{w3}$ to $I^{w7}$ before $t=3\epsilon$, and between instances $I^{w5}$ to $I^{w7}$ before $t=(3+q)\epsilon$. The case-by-case proof of \citet{Ausiello1999} shows that any deterministic $ALG$ reaches the ratio of $\frac{ALG(I)}{CIOPT(I)}\geq \frac{9+\sqrt{17}}{8}\sim 1.64$ for at least one of the instances described above.

In the proof, we keep track of the distance from the current picker position $l^{curr}$ in some arbitrary algorithm $ALG$ to the \textit{green dotted line} ($line$) (see Figure~\ref{fig:bad_all_algs_makesp}), which is the \textit{shortest} distance from $l^{curr}$ to any point on this line. 

Let us denote $\sigma(ALG)$ simply as $\sigma$. Assume that $ALG$ has $\sigma<(9+\sqrt{17})/8$. Then, in instances $I\in \{I^{w1}, I^{w2}\}$, the result of $ALG$ should be less than $\sigma\cdot CIOPT(I)=2\epsilon\sigma$ (see (\ref{eq:cioptw1})). To achieve this, at time $t=\epsilon$, the picker must be at a point $l^{curr}_1$ of distance $\leq (2 \sigma-3)\epsilon$ from $line$, since she has to cross the \textit{line} to pick the item $s^1$ (in $I^{w1}$) or $s^2$ (in $I^{w2}$) and then return to the depot, which takes her at least $\epsilon+d(line,l^{curr}_1)+\epsilon+\epsilon$ time (see (\ref{eq:main_anyalg})). 

Similarly, in instances $I\in \{I^{w3}, I^{w4}\}$, the result of $ALG$ should be less than $\sigma\cdot CIOPT(I)=4\epsilon\sigma$ (see (\ref{eq:cioptw3})). Observe that at $t=3 \epsilon$, given $l^{curr}_1$ and $r_{1,2}=\epsilon$, $ALG$ was not able to pick at least one of the items -- $s^1$ or $s^2$. \textit{W.l.o.g}, let it be $s^1$. Then at time $t=3 \epsilon$, the picker must be at a point $l^{curr}_2$ of distance $\geq (7- 4 \sigma)\epsilon$ from $line$ (see \citet{Ausiello1999} for more details). 

It remains to examine instances $I\in \{I^{w5}, I^{w6}, I^{w7}\}$. Recall that the picker's position $l^{curr}_2$ is situated at some distance $\geq(7- 4 \sigma)\epsilon$ from the $line$ at time $t=3 \epsilon$, thus can be shown that she has to cross the $line$ at least once to collect the remaining items as follows. Since, as explained above, the picker cannot have picked both items of the first order ($s^1$ and $s^2$) at $t=3 \epsilon$, there are two cases: 
\begin{itemize}
\item  The picker has picked neither $s^1$ nor $s^2$, then she will have to cross the $line$ at least once to serve the first order. 
\item $W.l.o.g.$, the picker has picked $s^2$. Then, if we look at both warehouse sections separated by the $line$, the picker has to be located at the part with $s^2$ from the line. Indeed, she started moving to $s^2$ at time $t=\epsilon$ being at the other side of the $line$ at a distance of at most $(2\sigma-3)\epsilon$. Even if she moved directly to $s^1$ after having visited $s^2$, at time $t=3\epsilon$, she is at a distance of at most $3\epsilon-\epsilon-(\epsilon +(2\sigma-3)\epsilon)=(\epsilon -(2\sigma-3)\epsilon)<\epsilon$ from $s^2$. Therefore, also in this case, the picker has to cross the $line$ to collect the remaining item of the first order.
\end{itemize}
To sum up, in both cases in $ALG$, the picker crosses the $line$ at some time after $t=3\epsilon$. Let denote this time $(3+q)\epsilon$. Since the result  of $ALG$ should be less than $\sigma\cdot CIOPT(I^{w5})=4\epsilon\sigma$, the picker should cross the $line$ at no later time than $(4\epsilon\sigma-2\epsilon)$. Therefore, we have
\begin{align}
q\leq 4\sigma-5 \label{eq:q}
\end{align}

Finally, if in $ALG$ the picker crosses the $line$  not later than time $(3+q)\epsilon=(4 \sigma -2)\epsilon$, see (\ref{eq:q}), then at least in one of the instances $I\in \{I^{w6}, I^{w7}\}$, $ALG(I)\geq (7+3q)\epsilon$ (see (\ref{eq:cioptw6})), whereas $CIOPT=(4+2q)\epsilon$. The least value of $\sigma$ that satisfies $\sigma\geq\frac{(7+3q)\epsilon}{(4+2q)\epsilon}$ given (\ref{eq:q}) is $\sigma=\frac{9+\sqrt{17}}{8}$.

\newpage 

\section{Average observed optimality ratios of \textit{Reopt} in \textit{Pcart} and \textit{Robot}} ~\label{sec:exp_ratios_Pcart}
Tables~\ref{tab:compratio_avg_Pcart} and~\ref{tab:compratio_avg} report the average observed optimality ratios of \textit{Reopt}, for the experiments described in Section~\ref{sec:experiments}.
\begin{table}
\caption{Average optimality ratios of \textit{Reopt} in \textit{Pcart} }  \label{tab:compratio_avg_Pcart}
\scriptsize{
\begin{tabular}{lrrrrrrrrrrrrr}
\toprule
Setting & \multicolumn{13}{c}{\underline{The number of orders $n=$}} \\[0.05cm]
  &  3 & 4 & 5 & 6 & 7 & 8 & 9 & 10 & 11 & 12 & 13 & 14 & 15\\
\midrule
Base & 1.11 & 1.07 & 1.08 & 1.06 & 1.07 & 1.06 & 1.07 & 1.05 & 1.07 & 1.06 & 1.05 & 1.06 & 1.06 \\
Largewarehouse & 1.13 & 1.11 & 1.09 & 1.09  & 1.08 & 1.07 & 1.08 & 1.10 & 1.06 & 1.06 & 1.08 & 1.08 & *1.06 \\
Lesscrossaisles & 1.11 & 1.10 & 1.07 & 1.07  & 1.06 & 1.07 & 1.07 & 1.05 & 1.07 & 1.04 & 1.05 & 1.07 & 1.07 \\
Classbaseddispersion & 1.11 & 1.07 & 1.08 & 1.06 & 1.08 & 1.06 & 1.07 & 1.05 & 1.03 & 1.04 & 1.05 & 1.05 & *1.04  \\ %
Smallorders & 1.16 & 1.11 & 1.09 & 1.07 & 1.07 & 1.05 & 1.06 & 1.04 & 1.04 & 1.04 & 1.06 & 1.05 & 1.05 \\ 
Largeorders & 1.10 & 1.07 & 1.07 & 1.04 & 1.06 & *1.05 & *1.08 & -- & -- & -- & -- & -- & -- \\ 
Smallbatches & 1.10 & 1.07 & 1.06 & 1.05 & 1.04 & 1.04 & 1.04 & 1.03 & 1.03 & 1.03 & 1.03 & 1.02 & 1.02 \\
Largebatches  & 1.10 & 1.09 & 1.12 & 1.09 & 1.07 & 1.06 & 1.08 & -- & -- & -- & -- & -- & --  \\ 
Smallrate & 1.10 & 1.11 & 1.08 & 1.06 & 1.05 & 1.06 & 1.03 & 1.04 & 1.04 & 1.06 & 1.04 & 1.06 & 1.05 \\ 
Largerate  & 1.13 & 1.09 & 1.09 & 1.08 & 1.06 & 1.06 & 1.07 & 1.07 & 1.07 & 1.07 & 1.07 & 1.06 & *1.04 \\ 
\midrule
Total  & 1.12 & 1.09 & 1.08 & 1.07 & 1.07 & 1.06 & 1.06 & 1.05 & 1.05 & 1.05 & 1.05 & 1.06 & 1.05 \\ 
\midrule[\heavyrulewidth]
\multicolumn{14}{l}{\scriptsize{\textit{Note.} $^*$ Results for less than 10 (out of 10) instances. CIOPT did not solve 1-2 instances to optimality.}}\\
\multicolumn{14}{l}{\scriptsize{\quad\quad~~ -- No results reported, since CIOPT solved less than 8 instances to optimality.}}
  \end{tabular}}
\end{table}

\begin{table}
\caption{Average observed optimality ratios of \textit{Reopt} in \textit{Robot} } \label{tab:compratio_avg}
\scriptsize{
\begin{tabular}{lrrrrrrrrrrrrr}
\toprule
Setting & \multicolumn{13}{c}{\underline{The number of orders $n=$}} \\[0.05cm]
  &  3 & 4 & 5 & 6 & 7 & 8 & 9 & 10 & 11 & 12 & 13 & 14 & 15 \\
\midrule
Base & 1.09 & 1.09 & 1.07 & 1.05 & 1.08 & 1.04 & 1.05 & 1.05 & 1.04 & 1.04 & 1.05 & 1.05 & 1.04 \\ 
Largewarehouse & 1.10 & 1.09 & 1.09 & 1.08 & 1.06 & 1.05 & 1.06 & 1.05 & 1.04 & 1.04 & 1.06 & 1.05 & 1.06 \\ 
Lesscrossaisles & 1.11 & 1.11 & 1.11 & 1.07 & 1.09 & 1.07 & 1.06 & 1.06 & 1.07 & 1.05 & 1.07 & 1.07 & 1.06 \\ 
Classbaseddispersion & 1.08 & 1.10 & 1.05 & 1.05 & 1.08 & 1.05 & 1.03 & 1.04 & 1.03 & 1.03 & 1.04 & 1.05 & 1.05 \\ 
Smallorders & 1.11 & 1.10 & 1.08 & 1.06 & 1.06 & 1.04 & 1.05 & 1.04 & 1.04 & 1.03 & 1.03 & 1.04 & 1.03 \\ 
Largeorders & 1.07 & 1.06 & 1.08 & 1.05 & 1.06 & *1.07 & *1.05 & -- & -- & -- & -- & -- & -- \\ 
Smallbatches & 1.10 & 1.08 & 1.07 & 1.05 & 1.07 & 1.05 & 1.04 & 1.04 & 1.05 & 1.04 & 1.05 & 1.04 & 1.03 \\ 
Largebatches  & 1.09 & 1.08 & 1.08 & 1.06 & 1.07 & 1.04 & 1.05 & *1.05 & -- & -- & -- & -- & -- \\ 
Smallrate & 1.07 & 1.08 & 1.07 & 1.04 & 1.04 & 1.04 & 1.04 & 1.04 & 1.04 & 1.04 & 1.03 & 1.04 &1.04 \\ 
Largerate  & 1.12 & 1.08 & 1.09 & 1.04 & 1.08 & 1.06 & 1.06 & 1.05 & 1.06 & 1.05 & 1.06 & 1.06 & *1.04 \\ 
\midrule
Total  & 1.09 & 1.09 & 1.08 & 1.06  & 1.07 & 1.05 &1.05 & 1.05 & 1.05 &  1.04 & 1.05 & 1.05 & 1.04 \\
\midrule[\heavyrulewidth]
\multicolumn{14}{l}{\scriptsize{\textit{Note.} $^*$ Results for less than 10 (out of 10) instances. CIOPT did not solve 1-2 instances to optimality.}}\\
\multicolumn{14}{l}{\scriptsize{\quad\quad~~ -- No results reported, since CIOPT solved less than 8 instances to optimality.}}
  \end{tabular}
}

\end{table}

\end{document}